\long\def\symbolfootnote[#1]#2{\begingroup%
\def\thefootnote{\fnsymbol{footnote}}\footnote[#1]{#2}\endgroup}
\qed\vspace{5pt}}
\newtheoremstyle{lause}% name
{5pt}% space above
{5pt}% space below
{\slshape}% body font
{\parindent}% indent amount (empty = no indent)
{\bfseries}% theorem head font
{.}% punctuation after theorem head
{.5em}% space after theorem head
{}% theorem head spec (can be left empty, meaning 'normal')
\theoremstyle{lause}
\newtheoremstyle{maaritelma}% name
{5pt}% space above
{5pt}% space below
{\rmfamily}% body font
{\parindent}% indent amount (empty = no indent)
{\bfseries}% theorem head font
{.}% punctuation after theorem head
{.5em}% space after theorem head
{}% theorem head spec (can be left empty, meaning 'normal')
\theoremstyle{maaritelma}
\newtheoremstyle{lause}% name
{5pt}% space above
{5pt}% space below
{\slshape}% body font
{\parindent}% indent amount (empty = no indent)
{\bfseries}% theorem head font
{.}% punctuation after theorem head
{.5em}% space after theorem head
{}% theorem head spec (can be left empty, meaning 'normal')
\theoremstyle{lause}
\newtheorem{theorem}{Theorem}[section]
\newtheorem{lemma}[theorem]{Lemma}
\newtheorem{corollary}[theorem]{Corollary}
\newtheorem{problem}[theorem]{Problem}
\newtheoremstyle{maaritelma}% name
{5pt}% space above
{5pt}% space below
{\rmfamily}% body font
{\parindent}% indent amount (empty = no indent)
{\bfseries}% theorem head font
{.}% punctuation after theorem head
{.5em}% space after theorem head
{}% theorem head spec (can be left empty, meaning 'normal')
\theoremstyle{maaritelma}
\newtheorem{definition}[theorem]{Definition}
\newtheorem{example}[theorem]{Example}
\newtheorem{remark}[theorem]{Remark}
\numberwithin{equation}{section}
\begin{document}

\thispagestyle{empty}

\begin{center}

{\large{\textbf{Condensers with infinitely many touching Borel
plates and minimum energy problems}}}

\vspace{18pt}

\textbf{Natalia Zorii}

\vspace{18pt}

\textsl{Dedicated to the memory of Professor Bogdan
Bojarski}\vspace{8pt}

\footnotesize{\address{Institute of Mathematics, Academy of Sciences
of Ukraine, Tereshchenkivska~3, 01601,
Kyiv-4, Ukraine\\
natalia.zorii@gmail.com }}

\end{center}

\vspace{12pt}

{\footnotesize{\textbf{Abstract.} Defining a condenser in a locally compact space as a locally finite, countable collection of Borel sets $A_i$, $i\in I$, with the sign $s_i=\pm1$ prescribed such that $A_i\cap A_j=\varnothing$ whenever $s_is_j=-1$, we consider a minimum energy problem with an external field over infinite dimensional vector measures $(\mu^i)_{i\in I}$, where $\mu^i$ is a suitably normalized positive Radon measure carried by $A_i$ and such that $\mu^i\leqslant\xi^i$ for all $i\in I_0$, $I_0\subset I$ and constraints $\xi^i$, $i\in I_0$, being given. If $I_0=\varnothing$, the problem reduces to the (unconstrained) Gauss variational problem, which is in general unsolvable even for a condenser of two closed, oppositely signed plates in $\mathbb R^3$ and the Coulomb kernel. Nevertheless, we provide sufficient conditions for the existence of solutions to the stated problem in its full generality, establish the vague compactness of the solutions, analyze their uniqueness, describe their weighted potentials, and single out their characteristic properties. The strong and the vague convergence of minimizing nets to the minimizers is studied. The phenomena of non-ex\-ist\-ence and non-uniqu\-eness of solutions to the problem are illustrated by examples. The results obtained are new even for the classical kernels on $\mathbb R^n$, $n\geqslant2$, and closed $A_i$, $i\in I$, which is important for applications.}}
\symbolfootnote[0]{\quad 2010 Mathematics Subject Classification:
Primary 31C15.} \symbolfootnote[0]{\quad Key words: infinite
dimensional vector Radon measures, consistent kernels, minimum energy problems,
condensers with infinitely many touching Borel plates, external fields,
constraints.}

\vspace{6pt}

\markboth{\textsl{Natalia Zorii}} {\textsl{Condensers of infinitely many touching Borel plates}}

\section{Introduction}\label{sec:intr}
The interest in minimum energy problems with external fields, initially inspired by Gauss \cite{Gauss} and
further experiencing a new growth due to work
of Frostman \cite{Fr} and Polish and Japanese mathematicians
(Leja et al. and Ohtsuka; see \cite{Leja,O} and the references cited
therein), has been motivated by their direct relations with the
Dirichlet and balayage problems. A new impulse to this part of potential theory (which is often
referred to as \emph{Gauss variational problem\/} or \emph{weighted minimum energy problems\/}) came in the 1980's
when Gonchar and Rakhmanov \cite{GR0,GR1},
Mhaskar and Saff \cite{MaS} applied
logarithmic potentials with external fields in the investigation of
orthogonal polynomials and rational approximations to analytic
functions.

In the present paper we study weighted minimum energy problems in a very
general setting, over infinite dimensional vector measures on a locally compact (Hausdorff) space (l.c.s.) $X$ \cite[Chapter~I, Section~9, n$^\circ$\,7]{B1},
associated with a generalized condenser. To be precise, a \emph{generalized condenser\/} $\mathbf A$ in $X$ is a locally finite, countable collection of \emph{Borel\/} sets
$A_i\subset X$, $i\in I$, termed \emph{plates}, with the sign
$s_i:=\mathrm{sign}\,A_i=\pm1$ prescribed such that $A_i\cap A_j=\varnothing$ whenever $s_is_j=-1$. We
emphasize that, although any two oppositely charged plates of a generalized condenser are disjoint, their closures in $X$ may have points in common.
A generalized condenser $\mathbf A$ is said to be \emph{standard\/} if the
$A_i$, $i\in I$, are closed in $X$. The concept of a standard condenser
with infinitely many (closed) plates has been introduced first in
our earlier study \cite{ZPot2}, while that of a generalized
condenser seems to be new. \emph{Unless explicitly stated otherwise,
when speaking of a condenser, we shall tacitly assume it to be generalized.}

We denote by $\mathfrak M(X)$ the linear space of all real-valued scalar Radon measures on $X$, equipped with the \emph{vague topology}, i.e.,
the (Hausdorff) topology of pointwise convergence on the class $C_0(X)$ of all continuous functions on $X$ with compact support.\footnote{When speaking of a continuous function, we understand that the values are \emph{finite\/} real numbers.} For any set $Q\subset X$,
let $\mathfrak M^+(Q)$ stand for the cone of all \emph{positive\/} $\nu\in\mathfrak M(X)$ \emph{carried by\/} $Q$ (for a definition, see Section~\ref{sec:prel1} below).
These and other notions of the theory of measures and integration on a l.c.s., to be used throughout the
paper, can be found in \cite{E,B2}; see also \cite{F1} for a short
survey.

A vector measure $\boldsymbol\mu=(\mu^i)_{i\in I}$ is said to be \emph{associated\/} with a (generalized) condenser $\mathbf A=(A_i)_{i\in I}$ if $\mu^i\in\mathfrak M^+(A_i)$ for all $i\in I$. Denoting by
$\mathfrak M^+(\mathbf A)$ the class of all those $\boldsymbol\mu$, we thus have\footnote{If $I$ is a singleton, we preserve the normal
fonts instead of the bold ones.}
\[\mathfrak M^+(\mathbf A):=\prod_{i\in I}\,\mathfrak M^+(A_i).\]
The trace of the vague product space topology on $\mathfrak M^+(X)^{\mathrm{Card}\,I}$ on $\mathfrak M^+(\mathbf A)$ is likewise called the \emph{vague topology\/} on $\mathfrak M^+(\mathbf A)$.

For any topological space $Y$, let $\Psi(Y)$ consist of all lower semicontinuous (l.s.c.) functions $\psi: Y\to(-\infty,\infty]$, nonnegative unless $Y$ is compact.

A \emph{kernel\/} on $X$ is defined as a symmetric function $\kappa\in\Psi(X\times X)$. In the present paper we shall be concerned with a \emph{positive definite\/} kernel $\kappa$, which means that the \emph{energy\/} $\kappa(\nu,\nu):=\int\kappa(x,y)\,d(\nu\otimes\nu)(x,y)$
of any (signed) $\nu\in\mathfrak M(X)$ is nonnegative whenever defined. (By definition, $\kappa(\nu,\nu)$ is well defined provided that $\kappa(\nu^+,\nu^+)+\kappa(\nu^-,\nu^-)$ or $\kappa(\nu^+,\nu^-)$ is finite, $\nu^+$ and $\nu^-$ being the positive and negative parts in the Hahn--Jor\-dan decomposition of $\nu$, respectively.) Then the set $\mathcal E_\kappa(X)$ of all
$\nu\in\mathfrak M(X)$ with finite $\kappa(\nu,\nu)$ is a pre-Hil\-bert space with the inner product
\[\langle\mu,\nu\rangle_\kappa:=\kappa(\mu,\nu):=\int\kappa(x,y)\,d(\mu\otimes\nu)(x,y),\quad\mu,\nu\in\mathcal E_\kappa(X),\]
and the seminorm $\|\nu\|_\kappa:=\sqrt{\kappa(\nu,\nu)}$. The topology on $\mathcal E_\kappa(X)$ determined by $\|\cdot\|_\kappa$ is termed \emph{strong}.  A (positive definite) kernel $\kappa$ is said to be \emph{strictly positive definite\/} if the seminorm $\|\cdot\|_\kappa$ is a norm.

In accordance with an electrostatic interpretation of a condenser, assume that the interaction between the components $\mu^i$, $i\in I$, of $\boldsymbol\mu\in\mathfrak M^+(\mathbf A)$ is characterized by the matrix $(s_is_j)_{i,j\in I}$, so that the \emph{energy\/} of $\boldsymbol\mu$ is given by\footnote{An expression $\sum_{i\in
I}c_i$ involving numerical values $c_i$ is meant to be well defined
provided that every summand is so and the sum does not depend on the
order of summation. By the Riemann
series theorem, the sum is finite if and only if the series
converges absolutely. Thus, $\kappa(\boldsymbol\mu,\boldsymbol\mu)$ is
finite provided that $\kappa$ is $(\mu^i\otimes\mu^j)$-int\-egr\-able for all $i,j\in I$ and the series in (\ref{intr1}) converges absolutely.\label{foot-abs}}
\begin{equation}\label{intr1}\kappa(\boldsymbol\mu,\boldsymbol\mu):=\sum_{i,j\in
I}\,s_is_j\kappa(\mu^i,\mu^j).\end{equation}
Let $\mathcal E_\kappa^+(\mathbf A)$ consist of all $\boldsymbol\mu\in\mathfrak M^+(\mathbf A)$ with finite $\kappa(\boldsymbol\mu,\boldsymbol\mu)$ (see footnote~\ref{foot-abs}).

To define admissible measures in the extremal problem we shall be dealing with, fix a numerical vector $\mathbf a=(a_i)_{i\in I}$ with $a_i>0$,
a vector-val\-ued function $\mathbf g=(g_i)_{i\in I}$ with continuous $g_i:X\to(0,\infty)$, and a vector-val\-ued \emph{external field\/} $\mathbf f=(f_i)_{i\in I}$ with universally measurable $f_i:X\to[-\infty,\infty]$.
Let $\mathcal E^+_{\kappa,\mathbf f}(\mathbf A,\mathbf a,\mathbf g)$ consist of all $\boldsymbol\mu\in\mathcal E_\kappa^+(\mathbf A)$ such that $\langle g_i,\mu^i\rangle:=\int g_i\,d\mu^i=a_i$ for all $i\in I$ and $\langle\mathbf f,\boldsymbol\mu\rangle:=\sum_{i\in I}\,\langle f_i,\mu^i\rangle$ is finite (see footnote~\ref{foot-abs}); then so is the \emph{weighted energy\/}
\[G_{\kappa,\mathbf f}(\boldsymbol\mu):=\kappa(\boldsymbol\mu,\boldsymbol\mu)+2\langle\mathbf f,\boldsymbol\mu\rangle, \ \boldsymbol\mu\in\mathcal E^+_{\kappa,\mathbf f}(\mathbf A,\mathbf a,\mathbf g).\]
Fix also $I_0\subset I$ and $\xi^i\in\mathfrak M^+(A_i)$, $i\in I_0$, such that $\langle g_i,\xi^i\rangle>a_i$; these $\xi^i$, $i\in I_0$, will serve as (upper) \emph{constraints\/} acting on positive measures carried by $A_i$, $i\in I_0$. \emph{We shall be concerned with the problem of minimizing the weighted energy\/ $G_{\kappa,\mathbf f}(\boldsymbol\mu)$
over all\/ $\boldsymbol\mu\in\mathcal E^+_{\kappa,\mathbf f}(\mathbf A,\mathbf a,\mathbf g)$ with the additional property that\/ $\mu^i\leqslant\xi^i$ for all\/ $i\in I_0$}.

If $I_0=\varnothing$, the problem reduces to the (unconstrained) Gauss variational problem, which is in general unsolvable even for a standard condenser of two closed, oppositely charged plates in $\mathbb R^n$, $n\geqslant3$, and the Riesz kernels $\kappa_\alpha(x,y):=|x-y|^{\alpha-n}$, $\alpha\in(0,n)$. (Here, $|x-y|$ denotes the Euclidean distance between $x,y\in\mathbb R^n$.) See Theorem~\ref{th:r} below providing necessary and sufficient conditions for the solvability of this problem for $\alpha\in(0,2]$. The phenomenon of unsolvability is illustrated by Example~\ref{ex-thin}.

Nevertheless, we provide sufficient conditions for the existence of solutions to the stated problem in its full generality and establish the vague compactness of the solutions (Theorems~\ref{th1}, \ref{th2}, and \ref{th3}), analyze their uniqueness (Section~\ref{sec:unique}), describe their weighted potentials, and single out their characteristic properties (Theorem~\ref{th:desc} and Corollary~\ref{desc:cont}). The strong and the vague convergence of minimizing nets to the minimizers is also studied (Eq.~(\ref{min-conv-str}) and Corollary~\ref{cor:v:conv}). We discover the phenomenon of non-uniqu\-eness of solutions to the problem, which is illustrated by Example~\ref{ex:nonun}.

\begin{remark}\label{rem:appl}The results obtained are new even for the classical kernels on $\mathbb R^n$, $n\geqslant2$ (in particular, for $-\log|x-y|$ on $\mathbb R^2$), and closed $A_i$, $i\in I$, which is important for applications. While our investigation is focused on theoretical aspects in a very general context, and
possible applications are so far outside the frames of the present paper, it is noteworthy to remark that
minimum energy problems in the constrained and unconstrained settings for the logarithmic kernel and finite dimensional vector measures have been considered for several decades in relation to Hermite--Pad\'{e} approximants \cite{GR,A} and random matrix ensembles \cite{Ku,AK}.\end{remark}

The results of the present paper, mentioned above, are obtained for a condenser with \emph{nearly closed\/} plates, which differ from closed sets in a set of zero inner capacity $c_\kappa(\cdot)$ (Definition~\ref{def:nearly}).\footnote{These closed sets may not form a condenser.} Nevertheless, we develop an efficient approach to the study of energies and potentials of infinite dimensional vector measures for an \emph{arbitrary\/} generalized condenser (Section~\ref{sec:def}), which we intend to use in our further work.

The approach developed is based on the observation that, since $(A_i)_{i\in I}$ is locally finite, the $A_i$, $i\in I$, are Borel, and $A_i\cap A_j=\varnothing$ whenever $s_is_j=-1$, the mapping
\[\mathfrak M^+(\mathbf A)\ni\boldsymbol\mu\mapsto R\boldsymbol\mu:=\sum_{i\in I}\,s_i\mu^i\]
maps $\mathfrak M^+(\mathbf A)$ onto a certain
set of \emph{signed\/} scalar Radon measures on $X$. Furthermore, $\mathcal E^+_\kappa(\mathbf
A)$ becomes a \emph{semimetric\/} space with the semimetric
\begin{equation}\label{vseminorm}
\|\boldsymbol\mu_1-\boldsymbol\mu_2\|_{\mathcal E^+_\kappa(\mathbf
A)}:=\Bigl[\sum_{i,j\in
I}\,s_is_j\kappa(\mu^i_1-\mu^i_2,\mu^j_1-\mu^j_2)\Bigr]^{1/2},
\end{equation}
and $R$ maps $\mathcal E_\kappa^+(\mathbf A)$ \emph{isometrically\/}
onto its (scalar) $R$-im\-age, contained in the pre-Hil\-bert space $\mathcal
E_\kappa(X)$  (see Section~\ref{sec:semimetric}). In view of this isometry, the topology on the semimetric space $\mathcal E^+_\kappa(\mathbf
A)$ is likewise termed \emph{strong}.

Another fact crucial to our approach is a strong completeness result for a certain subspace of $\mathcal E^+_\kappa(\mathbf A)$, where $\mathbf A$ is a standard condenser
(see Theorem~\ref{th:str} below, established in our earlier paper \cite{ZPot2}).
Let $A^+$, resp.\ $A^-$, denote the union of the $A_i$, $i\in I$, with $s_i=+1$, resp.\ $s_i=-1$. Write
\[\mathcal E^+_\kappa(\mathbf A,\leqslant\!\mathbf a,\mathbf g):=\bigl\{\boldsymbol\mu\in\mathcal E^+_\kappa(\mathbf A): \
\langle g_i,\mu^i\rangle\leqslant a_i\text{ \ for all\ }i\in I\bigr\}.\]

\begin{theorem}\label{th:str}Assume the\/ $A_i$, $i\in I$, are closed, $\kappa$ is consistent,\footnote{We refer to \cite{F1,F2} for the concept of \emph{consistency\/} (see also Section~\ref{sec:cons} below).} and
\begin{equation}\label{ser2}\sum_{i\in
I}\,a_ig_{i,\inf}^{-1}:=C<\infty,\text{ \ where \ }g_{i,\inf}:=\inf_{x\in A_i}\,g_i(x).\end{equation}
If, moreover, $\kappa|_{A^+\times A^-}$ is upper bounded, then the following assertions hold.
\begin{itemize}
\item[$\bullet$]
 $\mathcal E^+_\kappa(\mathbf A,\leqslant\!\mathbf a,\mathbf g)$ is complete in the induced strong topology. In more detail, any strong Cauchy net in\/ $\mathcal E^+_\kappa(\mathbf A,\leqslant\!\mathbf a,\mathbf g)$ converges strongly to any of its vague cluster points.
\item[$\bullet$]If, moreover,
$\kappa$ is strictly positive
definite and the\/ $A_i$, $i\in I$, are mutually disjoint, then
the strong topology on\/ $\mathcal E^+_\kappa(\mathbf A,\leqslant\!\mathbf a,\mathbf g)$ is finer than the induced vague topology.
\end{itemize}
\end{theorem}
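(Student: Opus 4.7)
The plan is to transfer everything to the scalar pre-Hilbert space $\mathcal E_\kappa(X)$ via the isometry $R:\boldsymbol\mu\mapsto\sum_{i\in I}s_i\mu^i$ from Section~\ref{sec:semimetric}. For $\boldsymbol\mu\in\mathfrak M^+(\mathbf A)$ write $\sigma^\pm(\boldsymbol\mu):=\sum_{s_i=\pm1}\mu^i\in\mathfrak M^+(A^\pm)$; since $A^+\cap A^-=\varnothing$, these positive Radon measures are mutually singular and coincide with the Jordan parts of $R\boldsymbol\mu$. The constraint $\langle g_i,\mu^i\rangle\le a_i$ combined with $g_i\ge g_{i,\inf}$ yields $\mu^i(X)\le a_ig_{i,\inf}^{-1}$, so by (\ref{ser2}) the total masses $\sigma^\pm(\boldsymbol\mu)(X)$ are uniformly bounded by $C$ on $\mathcal E^+_\kappa(\mathbf A,\leqslant\!\mathbf a,\mathbf g)$. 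Since the $A_i$ are closed, each coordinate ranges over the vaguely compact set $\{\nu\in\mathfrak M^+(A_i):\nu(X)\le a_ig_{i,\inf}^{-1}\}$; Tychonoff then produces vague cluster points $\boldsymbol\mu_0$, and local finiteness of $(A_i)_{i\in I}$ makes $R$ vaguely continuous, so $\sigma^\pm_\alpha\to\sigma^\pm_0$ vaguely along a chosen subnet.

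That $\boldsymbol\mu_0$ lies in $\mathcal E^+_\kappa(\mathbf A,\leqslant\!\mathbf a,\mathbf g)$ is routine: closedness of $A_i$ gives $\mu^i_0\in\mathfrak M^+(A_i)$, vague l.s.c.\ of $\nu\mapsto\langle g_i,\nu\rangle$ gives $\langle g_i,\mu^i_0\rangle\le a_i$, and $\kappa(\boldsymbol\mu_0,\boldsymbol\mu_0)<\infty$ follows by bounding the mixed term $\kappa(\sigma^+_0,\sigma^-_0)\le MC^2$ via the upper bound $\kappa|_{A^+\times A^-}\le M$, and estimating the diagonal terms $\|\sigma^\pm_0\|_\kappa^2$ through vague l.s.c.\ of $\nu\mapsto\|\nu\|_\kappa^2$ on $\mathfrak M^+(X)$ against the Cauchy-bounded quantities $\|\sigma^\pm_\alpha\|_\kappa^2$.

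The heart of the proof is the strong convergence $\boldsymbol\mu_\alpha\to\boldsymbol\mu_0$. Via the isometry one has
\[
\|\boldsymbol\mu_\alpha-\boldsymbol\mu_\beta\|_\kappa^2
=\|\sigma^+_\alpha-\sigma^+_\beta\|_\kappa^2+\|\sigma^-_\alpha-\sigma^-_\beta\|_\kappa^2-2\kappa(\sigma^+_\alpha-\sigma^+_\beta,\sigma^-_\alpha-\sigma^-_\beta),
\]
so once the cross term is shown to tend to zero the Cauchy condition forces $(\sigma^\pm_\alpha)$ to be strongly Cauchy in $\mathcal E^+_\kappa(X)$; consistency of $\kappa$, together with the vague convergences $\sigma^\pm_\alpha\to\sigma^\pm_0$, yields $\sigma^\pm_\alpha\to\sigma^\pm_0$ strongly, and the isometry then delivers $\boldsymbol\mu_\alpha\to\boldsymbol\mu_0$ strongly. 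The crux --- and the place where the hypothesis $\kappa|_{A^+\times A^-}\le M$ plays its essential role --- is controlling this cross term: a priori it is only uniformly bounded by $\sim MC^2$, and one has to show that the four mixed energies $\kappa(\sigma^\pm_\alpha,\sigma^\mp_\beta)$ all converge to the common limit $\kappa(\sigma^+_0,\sigma^-_0)$. This rests on a vague-continuity property of $(\nu,\pi)\mapsto\int\kappa\,d(\nu\otimes\pi)$ on bounded subsets of $\mathfrak M^+(A^+)\times\mathfrak M^+(A^-)$, which in turn is made available by combining the upper bound $M$ with the l.s.c.\ of $\kappa$; this is the main obstacle of the argument.

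For the second bullet, assume $\kappa$ strictly positive definite and the $A_i$ mutually disjoint. A strongly convergent net $\boldsymbol\mu_\alpha\to\boldsymbol\mu_0$ is Cauchy and, by the vague compactness above, admits a vague cluster point $\boldsymbol\mu_*$; the first bullet makes $\boldsymbol\mu_*$ a strong limit as well, so $\|R(\boldsymbol\mu_*-\boldsymbol\mu_0)\|_\kappa=0$. Strict positive definiteness upgrades this to $R\boldsymbol\mu_*=R\boldsymbol\mu_0$ in $\mathcal E_\kappa(X)$, and the mutual disjointness of $(A_i)_{i\in I}$ makes the decomposition $R\boldsymbol\nu=\sum_is_i\nu^i$ unique, forcing $\mu^i_*=\mu^i_0$ for every $i$, i.e.\ $\boldsymbol\mu_*=\boldsymbol\mu_0$. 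A vaguely relatively compact net with a single vague cluster point converges vaguely, which gives the desired finer-than-vague comparison.
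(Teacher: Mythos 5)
Your setup is sound and parallels what the paper does elsewhere: the mass bound $\mu^i(X)\leqslant a_ig_{i,\inf}^{-1}$ from (\ref{ser2}), Tychonoff for vague cluster points, the splitting of $\|R\boldsymbol\mu\|_\kappa^2$ together with $\kappa(\sigma^+,\sigma^-)\leqslant MC^2$ to get separate strong boundedness of $\sigma^\pm_\alpha$, and the verification that a vague cluster point lies in $\mathcal E^+_\kappa(\mathbf A,\leqslant\!\mathbf a,\mathbf g)$ all match the proof of Lemma~\ref{v-r-c}; the second bullet is fine once the first is in place. But the step you yourself label ``the main obstacle'' is a genuine gap, and the property you invoke to close it is not available. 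Upper boundedness of $\kappa$ on $A^+\times A^-$ together with lower semicontinuity does \emph{not} yield vague continuity of $(\nu,\pi)\mapsto\kappa(\nu,\pi)$ on mass-bounded subsets of $\mathfrak M^+(A^+)\times\mathfrak M^+(A^-)$: l.s.c.\ gives only $\kappa(\nu,\pi)\leqslant\liminf\kappa(\nu_\alpha,\pi_\alpha)$, and the reverse inequality fails in general because mass may escape to infinity without the kernel decaying there (take, e.g., $\kappa\equiv1$ and $\pi_\alpha=\delta_{y_\alpha}$ with $y_\alpha\to\infty_X$). In the paper such upper semicontinuity of mixed energies is obtained only under the additional hypotheses of Theorem~\ref{th2} (the property $(\infty_X)$, compact plates, the Portmanteau theorem); it is not a consequence of the hypotheses of Theorem~\ref{th:str}. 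So you cannot conclude that $\sigma^+_\alpha$ and $\sigma^-_\alpha$ are separately strongly Cauchy.

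The step is also unnecessary. The intended mechanism --- visible in the paper's own proof of Lemma~\ref{l:cons}, which is the same argument one level down --- is to use form (C$_2$) of consistency rather than trying to apply (C$_1$) to $\sigma^+$ and $\sigma^-$ separately. Since the $\sigma^\pm_\alpha$ are strongly bounded (the only place where the bound on $\kappa|_{A^+\times A^-}$ is really needed) and converge vaguely to $\sigma^\pm_0$ along a subnet, (C$_2$) gives $\sigma^\pm_\alpha\to\sigma^\pm_0$ \emph{weakly}, hence $R\boldsymbol\mu_\alpha\to R\boldsymbol\mu_0$ weakly along that subnet. Then
\[
\|R\boldsymbol\mu_\alpha-R\boldsymbol\mu_0\|_\kappa^2=\lim_\beta\,\kappa(R\boldsymbol\mu_\alpha-R\boldsymbol\mu_0,\,R\boldsymbol\mu_\alpha-R\boldsymbol\mu_\beta)\leqslant\liminf_\beta\,\|R\boldsymbol\mu_\alpha-R\boldsymbol\mu_0\|_\kappa\,\|R\boldsymbol\mu_\alpha-R\boldsymbol\mu_\beta\|_\kappa,
\]
whence $\|R\boldsymbol\mu_\alpha-R\boldsymbol\mu_0\|_\kappa\leqslant\liminf_\beta\|R\boldsymbol\mu_\alpha-R\boldsymbol\mu_\beta\|_\kappa$, which tends to $0$ by the Cauchy property. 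This gives strong convergence of the whole net to $\boldsymbol\mu_0$ without ever asserting convergence of the four mixed energies or separate strong Cauchyness of the positive and negative parts. (Note also that the paper does not reprove Theorem~\ref{th:str} but cites \cite{ZPot2} for it, so the comparison here is with the analogous argument in Lemma~\ref{l:cons}.)
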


\subsection{Minimum $\alpha$-Riesz energy problem for a standard condenser}\label{sec:ex}

We next show that the problem in question is in general unsolvable even in the case where $\mathbf A=(A_1,A_2)$ is a  standard condenser in $\mathbb R^n$, $n\geqslant 3$, with $s_1=+1$, $s_2=-1$, $f_1\equiv f_2\equiv 0$, $g_1\equiv g_2\equiv 1$, $a_1=a_2=1$, $I_0=\varnothing$, and $\kappa(x,y):=\kappa_\alpha(x,y):=|x-y|^{\alpha-n}$, $\alpha\in(0,2]$. Under these requirements, the problem can equivalently be rewritten as follows:
\begin{equation}\label{r-min}w_\alpha(\mathbf A):=\inf\,\kappa_\alpha(\mu^1-\mu^2,\mu^1-\mu^2),\end{equation}
where $\mu^i$, $i=1,2$, ranges over the class
\[\mathcal E^+_{\kappa_\alpha}(A_i,1):=\{\nu\in\mathfrak M^+(A_i)\cap\mathcal E_{\kappa_\alpha}(\mathbb R^n):\ \nu(A_i)=1\}.\]
To formulate the corresponding result and to explain in brief the reason for the phenomenon of unsolvability, we first recall the concept of $\alpha$-thin\-ness at infinity.

Throughout Section~\ref{sec:ex}, $F$ denotes a closed set in $\mathbb R^n$, $n\geqslant3$, such that $F^c:=\mathbb R^n\setminus F\ne\varnothing$, and $F^*$ the inverse of $F$ relative to $\{x\in\mathbb R^n: |x-x_0|=1\}$, $x_0\in F^c$ being fixed. Let $\nu^F$ stand for the $\alpha$-Riesz swept measure of $\nu\in\mathfrak M^+(\mathbb R^n)$ onto $F$, determined uniquely by \cite[Theorem~3.6]{FZ}.

\begin{definition}\label{def:thin} $F$ is said to be\/ $\alpha$-\emph{thin at infinity\/} if any of the following four equivalent assertions holds:
\begin{itemize}
\item[\rm (i)] $F^*$ is $\alpha$-thin at $x_0$.
\item[\rm (ii)] Either $F$ is compact, or $x_0$ is an $\alpha$-irregular boundary point of $F^*$.
\item[\rm (iii)] If $F_k$ denotes $F\cap\{x\in\mathbb R^n: q^k\leqslant|x-x_0|<q^{k+1}\}$, where $q\in(1,\infty)$, then
\begin{equation}\label{iii}\sum_{k\in\mathbb N}\,\frac{c_{\kappa_\alpha}(F_k)}{q^{k(n-\alpha)}}<\infty.\end{equation}
\item[\rm (iv)] There exists a connected component $D$ of $F^c$ such that for every $\nu\in\mathfrak M^+(D)$ with $\nu(\mathbb R^n)<\infty$ we have $\nu^F(\mathbb R^n)<\nu(\mathbb R^n)$.
\end{itemize}
\end{definition}

The equivalence of (i) and (ii) is due to \cite[Theorem~VII.13]{Brelo2} or \cite[Theorem~5.10]{L}, that of (ii) and (iii) holds by the Wiener criterion, and that of (iii) and (iv) has been established in \cite[Theorem~3.22]{FZ} (see also earlier papers \cite[Theorem~B]{Z0} and \cite[Theorem~4]{ZR}).

\begin{theorem}\label{th:thin}If\/ $F$ is not\/ $\alpha$-thin at infinity, then\/ $c_{\kappa_\alpha}(F)=\infty$. This cannot be reversed, i.e., there is\/ $F$ with\/ $c_{\kappa_\alpha}(F)=\infty$ that is\/ $\alpha$-thin at infinity.\end{theorem}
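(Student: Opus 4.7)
The plan is to prove the two claims separately, exploiting the equivalent formulations (iii) and (iv) of Definition~\ref{def:thin}. For the first I argue contrapositively, deducing (iv) from the finiteness of $c_{\kappa_\alpha}(F)$; for the second I construct an explicit $F$ by placing carefully sized balls in successive annuli around $x_0$.

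First, suppose $c_{\kappa_\alpha}(F)<\infty$. Then the $\alpha$-Riesz equilibrium measure $\mu_F\in\mathfrak{M}^+(F)$ exists with finite total mass $c_{\kappa_\alpha}(F)$, and its potential satisfies $U^{\mu_F}\leqslant 1$ everywhere on $\mathbb{R}^n$ and $U^{\mu_F}=1$ quasi-everywhere on $F$. Finiteness of $\mu_F$ gives $U^{\mu_F}(z)=O(|z|^{\alpha-n})\to 0$ as $|z|\to\infty$, while $c_{\kappa_\alpha}(F)<\infty$ precludes $F$ from containing a whole neighborhood of infinity; hence I can fix $z_0\in F^c$ with $U^{\mu_F}(z_0)<1$, and let $D$ be the connected component of $F^c$ containing $z_0$. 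On $D$ the potential $U^{\mu_F}$ is $\alpha$-harmonic (no $\mu_F$-mass there) and bounded above by $1$, so the maximum principle precludes $U^{\mu_F}(z)=1$ at any interior $z\in D$ (else $U^{\mu_F}\equiv 1$ on $D$, contradicting $U^{\mu_F}(z_0)<1$); thus $U^{\mu_F}<1$ throughout $D$. Invoking the balayage mass identity $\delta_z^F(\mathbb{R}^n)=U^{\mu_F}(z)$---a consequence of Fubini for the symmetric kernel combined with $U^{\delta_z^F}=U^{\delta_z}$ $\mu_F$-a.e.\ (since $\mu_F$ does not charge the polar set of $\alpha$-irregular boundary points)---I obtain by linearity that every $\nu\in\mathfrak{M}^+(D)$ with $\nu(\mathbb{R}^n)<\infty$ satisfies
\[
\nu^F(\mathbb{R}^n)\;=\;\int_D U^{\mu_F}\,d\nu\;<\;\nu(\mathbb{R}^n),
\]
which is exactly condition (iv), yielding $\alpha$-thinness of $F$ at infinity.

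For the second assertion I fix $q>1$ and let $B_k$ be a closed ball of radius $r_k:=q^k/k^{2/(n-\alpha)}$ placed entirely inside the annulus $\{q^k\leqslant|x-x_0|\leqslant q^{k+1}\}$; this fits for all sufficiently large $k$ since $r_k=o(q^k(q-1))$. Setting $F:=\bigcup_{k\geqslant k_0}B_k$, the set is closed as the $B_k$ lie in locally finite, pairwise disjoint annuli, and clearly $x_0\in F^c$. The standard scaling relation for $\alpha$-Riesz capacity gives $c_{\kappa_\alpha}(B_k)=\gamma_\alpha r_k^{n-\alpha}=\gamma_\alpha q^{k(n-\alpha)}/k^2$ for a positive constant $\gamma_\alpha$, so the general term of the series in (iii) is $O(k^{-2})$ and $F$ is $\alpha$-thin at infinity; meanwhile monotonicity of capacity forces $c_{\kappa_\alpha}(F)\geqslant c_{\kappa_\alpha}(B_k)\to\infty$ as $k\to\infty$.

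The chief technical hurdle is to rigorously justify the balayage mass identity $\delta_z^F(\mathbb{R}^n)=U^{\mu_F}(z)$ for (possibly noncompact) $F$ of finite $\alpha$-Riesz capacity; this rests on the $\alpha$-Riesz swept-measure theory developed in \cite[Theorem~3.6]{FZ}, together with the fact that the equilibrium measure does not load the polar set of $\alpha$-irregular boundary points of $F$, where the balayage potential could otherwise fail to equal $U^{\delta_z}$ pointwise.
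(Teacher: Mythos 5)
Your proof is correct in substance, but it takes a genuinely different route from the paper's. The paper's entire argument is a one-line comparison of two series: by \cite[Lemma~5.5]{L}, $c_{\kappa_\alpha}(F)<\infty$ if and only if $\sum_k c_{\kappa_\alpha}(F_k)<\infty$, and since $q^{k(n-\alpha)}\geqslant1$, divergence of the Wiener-type series (\ref{iii}) forces divergence of $\sum_k c_{\kappa_\alpha}(F_k)$; the counterexample for the converse then amounts to arranging $c_{\kappa_\alpha}(F_k)\asymp q^{k(n-\alpha)}/k^2$, exactly what your balls $B_k$ achieve. So your second half is a concrete instantiation of what the paper leaves implicit, and it is correct (the scaling $c_{\kappa_\alpha}(B_k)=\gamma_\alpha r_k^{n-\alpha}$ and the fit $r_k=o(q^k(q-1))$ both check out). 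Your first half, by contrast, goes through criterion (iv) of Definition~\ref{def:thin} and the balayage mass identity $\delta_z^F(\mathbb R^n)=U^{\mu_F}(z)$ rather than through (iii); this is heavier machinery (it needs the equilibrium measure in Fuglede's sense for a noncompact closed set, Frostman's maximum principle --- hence genuinely uses $\alpha\leqslant2$ --- and the swept-measure theory of \cite{FZ}), whereas the paper's route is purely a capacity computation valid with no extra input. What your route buys is a quantitative statement, $\nu^F(\mathbb R^n)=\int_D U^{\mu_F}\,d\nu$, which is of independent interest.

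One step is not justified as written: the claim that $U^{\mu_F}(z)=O(|z|^{\alpha-n})\to0$ as $|z|\to\infty$. For an unbounded $F$ of finite capacity (such sets exist; see (\ref{c3}) in Example~\ref{ex-thin}) the potential of $\mu_F$ equals $1$ nearly everywhere on $F$, so it does not tend to $0$ as $|z|\to\infty$ without restricting how $z$ escapes to infinity, and a finite total mass alone does not give the bound $O(|z|^{\alpha-n})$ when mass accumulates near $z$. You therefore need a different argument for the existence of $z_0\in F^c$ with $U^{\mu_F}(z_0)<1$. A clean fix: if $U^{\mu_F}\geqslant1$ held nearly everywhere on $\mathbb R^n$, then pairing with the equilibrium measure $\lambda_B$ of an arbitrary closed ball $B$ would give $c_{\kappa_\alpha}(B)\leqslant\kappa_\alpha(\mu_F,\lambda_B)\leqslant\|\mu_F\|_{\kappa_\alpha}\,c_{\kappa_\alpha}(B)^{1/2}$, i.e., $c_{\kappa_\alpha}(B)\leqslant\|\mu_F\|_{\kappa_\alpha}^2$ for every ball, which is absurd; since $U^{\mu_F}=1$ n.e.\ on $F$, some point of $F^c$ must then satisfy $U^{\mu_F}<1$. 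With that repair, and with the (nonlocal, for $\alpha<2$) strong maximum principle for $\alpha$-harmonic functions spelled out, your argument goes through.
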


\begin{proof} According to \cite[Lemma~5.5]{L}, $c_{\kappa_\alpha}(F)<\infty$$\iff$$\sum_{k\in\mathbb N}\,c_{\kappa_\alpha}(F_k)<\infty$,
$F_k$ being defined in Definition~\ref{def:thin}(iii). When compared with (\ref{iii}), this yields the theorem.
\end{proof}

\begin{remark}There is a false statement in \cite[Chapter~IX, Section~6]{Brelo2} and \cite[p.~216]{Nin} that a closed set $F\subset\mathbb R^n$, $n\geqslant 3$, is $2$-thin at infinity if and only if $c_{\kappa_2}(F)<\infty$. Although for any $\alpha\in(0,2]$, $c_{\kappa_\alpha}(F)<\infty$ implies, indeed, the $\alpha$-thinness of $F$ at infinity, the converse is incorrect (see Theorem~\ref{th:thin} above, as well as \cite[pp.~276--277]{Ca2} pertaining to $\alpha=2$). We emphasize that the $\alpha$-thin\-ness of $F$ at infinity is indeed equivalent to the existence of the $\alpha$-Riesz equilibrium measure $\gamma_F$ on $F$, but treated in an \emph{extended\/} sense where $\gamma_F(F)=\kappa_\alpha(\gamma_F,\gamma_F)=\infty$ is allowed \cite[Theorem~5.1]{L}.\footnote{Mizuta \cite{Mizuta} has shown that the $2$-thin\-ness of a closed planar set at infinity does not necessarily imply the finiteness of the logarithmic capacity, giving thus an answer in the negative to a conjecture by Ninomiya related to the kernel $-\log|x-y|$ on $\mathbb R^2$ \cite[p.~216]{Nin}.}\end{remark}

Returning to problem (\ref{r-min}), we can certainly assume that $c_{\kappa_\alpha}(A_i)>0$, $i=1,2$, for if not, then $w_\alpha(\mathbf A)=+\infty$, and hence the problem makes no sense. There is also no loss of generality in assuming $c_{\kappa_\alpha}(A_1)<\infty$, because if $c_{\kappa_\alpha}(A_i)=\infty$ for $i=1,2$, then $w_\alpha(\mathbf A)=0$; and hence this infimum
cannot be an actual minimum, $\kappa_\alpha$ being strictly positive definite \cite[Theorem~1.15]{L}.

\begin{theorem}[{\rm see \cite[Theorem~5]{ZR}}]\label{th:r} Assume, for simplicity, $A^c_2$ is connected. If, moreover, the Euclidean distance between\/ $A_1$ and\/ $A_2$ is\/ ${}>0$, then problem\/~{\rm(\ref{r-min})} is\/ {\rm(}uniquely\/{\rm)} solvable if and only if either\/ $c_{\kappa_\alpha}(A_2)<\infty$, or\/ $A_2$ is not\/ $\alpha$-thin at infinity.\end{theorem}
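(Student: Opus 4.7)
The proof splits along the two implications and rests on the interplay between classical $\alpha$-Riesz balayage theory, characterization~(iv) of Definition~\ref{def:thin}, and the strong-completeness Theorem~\ref{th:str}. Throughout, let $d>0$ denote the Euclidean distance between $A_1$ and $A_2$.

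\emph{Necessity.} The plan is to argue by contradiction, supposing $c_{\kappa_\alpha}(A_2)=\infty$ and $A_2$ is $\alpha$-thin at infinity. Fix any $\mu^1\in\mathcal E^+_{\kappa_\alpha}(A_1,1)$, which exists since $0<c_{\kappa_\alpha}(A_1)<\infty$, and set $\nu:=(\mu^1)^{A_2}$. Because $A_2^c$ is connected and $d>0$, $\mu^1$ is carried by the single component $D:=A_2^c$, so Definition~\ref{def:thin}(iv) gives $m:=\nu(\mathbb R^n)<1$. Using $c_{\kappa_\alpha}(A_2)=\infty$, select unit measures $\lambda_k\in\mathfrak M^+(A_2)\cap\mathcal E_{\kappa_\alpha}(\mathbb R^n)$ with $\|\lambda_k\|_{\kappa_\alpha}^2\to 0$ (for instance, normalized equilibrium measures on compact subsets $K_k\subset A_2$ with $c_{\kappa_\alpha}(K_k)\to\infty$). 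By Cauchy--Schwarz, $\kappa_\alpha(\mu^1-\nu,\lambda_k)\to 0$, so $\mu^2_k:=\nu+(1-m)\lambda_k$ is admissible and satisfies $\kappa_\alpha(\mu^1-\mu^2_k,\mu^1-\mu^2_k)\to\|\mu^1-\nu\|^2_{\kappa_\alpha}$. Since $\nu$ is the unique strong minimizer of $\|\mu^1-\cdot\|^2_{\kappa_\alpha}$ over $\mathfrak M^+(A_2)\cap\mathcal E_{\kappa_\alpha}(\mathbb R^n)$ but has mass $m<1$, the unit-mass infimum cannot be attained for this $\mu^1$; as $\mu^1$ was arbitrary, $w_\alpha(\mathbf A)$ is not attained.

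\emph{Sufficiency.} I would verify the hypotheses of Theorem~\ref{th:str} with $\mathbf a=(1,1)$, $\mathbf g\equiv(1,1)$: $\kappa_\alpha$ is consistent and strictly positive definite by \cite{L}, (\ref{ser2}) holds with $C=2$, and $\kappa_\alpha|_{A_1\times A_2}\leqslant d^{\alpha-n}$. The parallelogram identity in $\mathcal E_{\kappa_\alpha}(\mathbb R^n)$, applied to the $R$-images of pairs of admissible vector measures, then shows that every minimizing net $\boldsymbol\mu_s$ is strong Cauchy in the semimetric~(\ref{vseminorm}). Theorem~\ref{th:str} produces a strong (and simultaneously vague) limit $\boldsymbol\mu_0\in\mathcal E^+_{\kappa_\alpha}(\mathbf A,\leqslant\!\mathbf 1,\mathbf 1)$; uniqueness of the minimizer, once existence is secured, is immediate from strict positive definiteness.

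\textbf{Main obstacle.} The decisive remaining step is to show that no mass escapes to infinity along the minimizing net, i.e., $\mu^i_0(A_i)=1$ for $i=1,2$. For $i=1$ this is automatic from $c_{\kappa_\alpha}(A_1)<\infty$, and the same tightness argument handles $i=2$ when $c_{\kappa_\alpha}(A_2)<\infty$. The hard case is $c_{\kappa_\alpha}(A_2)=\infty$ with $A_2$ not $\alpha$-thin at infinity: here I would argue contrapositively via balayage. If a fraction $\varepsilon>0$ of the mass of $\mu^2_s$ escaped vaguely to infinity, one could replace that portion by the sweep onto $A_2$ of the corresponding fraction of $\mu^1_s$; the failure of Definition~\ref{def:thin}(iv) for non-thin $A_2$ guarantees this sweep is mass-preserving (so the competitor remains admissible), while the variational characterization of balayage makes it strictly energy-decreasing, contradicting minimality of the net. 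Quantifying this perturbation while respecting the exact unit-mass constraint is the technical heart of the proof.
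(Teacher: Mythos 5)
A preliminary remark: the paper does not prove Theorem~\ref{th:r} at all --- it is imported from \cite[Theorem~5]{ZR} --- so the only internal evidence of the intended argument is the paragraph following the statement, which records that every minimizing sequence for (\ref{r-min}) converges strongly and vaguely to a unique $\gamma=\gamma^+-\gamma^-$ with $\gamma^+\in\mathcal E^+_{\kappa_\alpha}(A_1,1)$ and $\gamma^-=(\gamma^+)^{A_2}$. Your \emph{necessity} half is correct and is essentially that mechanism run in reverse: the balayage $\nu=(\mu^1)^{A_2}$ is the unique projection of $\mu^1$ onto the strongly complete convex cone $\mathcal E^+_{\kappa_\alpha}(A_2)$, so the infimum of $\|\mu^1-\mu^2\|^2_{\kappa_\alpha}$ over unit measures $\mu^2$ equals $\|\mu^1-\nu\|^2_{\kappa_\alpha}$ (your $\lambda_k$ construction gives $\leqslant$, the projection property gives $\geqslant$), and attainment would force the attaining measure to coincide with $\nu$, whose total mass is $m<1$ by Definition~\ref{def:thin}(iv) applied to the single component $D=A_2^c$. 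That part is complete.

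The genuine gap is exactly where you place it: the sufficiency half in the case $c_{\kappa_\alpha}(A_2)=\infty$ with $A_2$ not $\alpha$-thin at infinity. Everything up to the existence of a strong and vague limit $\boldsymbol\mu_0$ with $\mu^2_0(A_2)\leqslant1$ is fine, but the crucial step $\mu^2_0(A_2)=1$ is only asserted, and your proposed repair does not work as stated. A ``fraction $\varepsilon$ of the mass of $\mu^2_s$ escaping to infinity'' is not a well-defined object attached to a single member of the net; moreover, producing a competitor whose energy is merely \emph{strictly} smaller than $G(\boldsymbol\mu_s)$ contradicts nothing, since $\boldsymbol\mu_s$ is not a minimizer --- one would need a decrease bounded away from zero uniformly along the net, which your sketch does not provide. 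The established route is the reverse order of operations: first identify the limit, proving $\mu^2_0=(\mu^1_0)^{A_2}$ (this is \cite[Eq.~(27)]{ZR}; it follows from the fact that, for fixed $\mu^1$, the optimal $\mu^2$ is the projection of $\mu^1$ onto $\mathcal E^+_{\kappa_\alpha}(A_2)$, i.e.\ the swept measure), and only then invoke the failure of Definition~\ref{def:thin}(iv) --- balayage onto a closed set that is not $\alpha$-thin at infinity preserves total mass --- to get $\mu^2_0(A_2)=\mu^1_0(A_1)=1$, whence $\boldsymbol\mu_0$ is admissible and optimal. Until that identification (or an honest tightness argument replacing it) is supplied, the sufficiency direction is incomplete. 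A smaller instance of the same issue: $\mu^1_0(A_1)=1$ is not ``automatic'' from $c_{\kappa_\alpha}(A_1)<\infty$; it requires the equilibrium-measure exhaustion argument used in Section~\ref{th1:proof} for the unconstrained indices, since vague convergence alone does not preserve total mass on a noncompact closed set.
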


It follows that, if $A_2$ is $\alpha$-thin at infinity, but $c_{\kappa_\alpha}(A_2)=\infty$ (such $A_2$ exists by Theorem~\ref{th:thin}), then $w_\alpha(\mathbf A)$ cannot be attained among admissible measures.
The reason for this phenomenon is that, under the quoted assumptions, any minimizing sequence converges strongly and vaguely to a (unique) $\gamma=\gamma^+-\gamma^-$ such that $\gamma^+\in\mathcal E^+_{\kappa_\alpha}(A_1,1)$, while $\gamma^-=(\gamma^+)^{A_2}$ \cite[Eq.~(27)]{ZR}. Since $A_2$ is $\alpha$-thin at infinity, we get $(\gamma^+)^{A_2}(A_2)<1$ by Definition~\ref{def:thin}(iv), and problem (\ref{r-min}) therefore has \emph{no\/} solution.

\begin{figure}[htbp]
\begin{center}
\vspace{-.4in}
\hspace{1in}\includegraphics[width=4in]{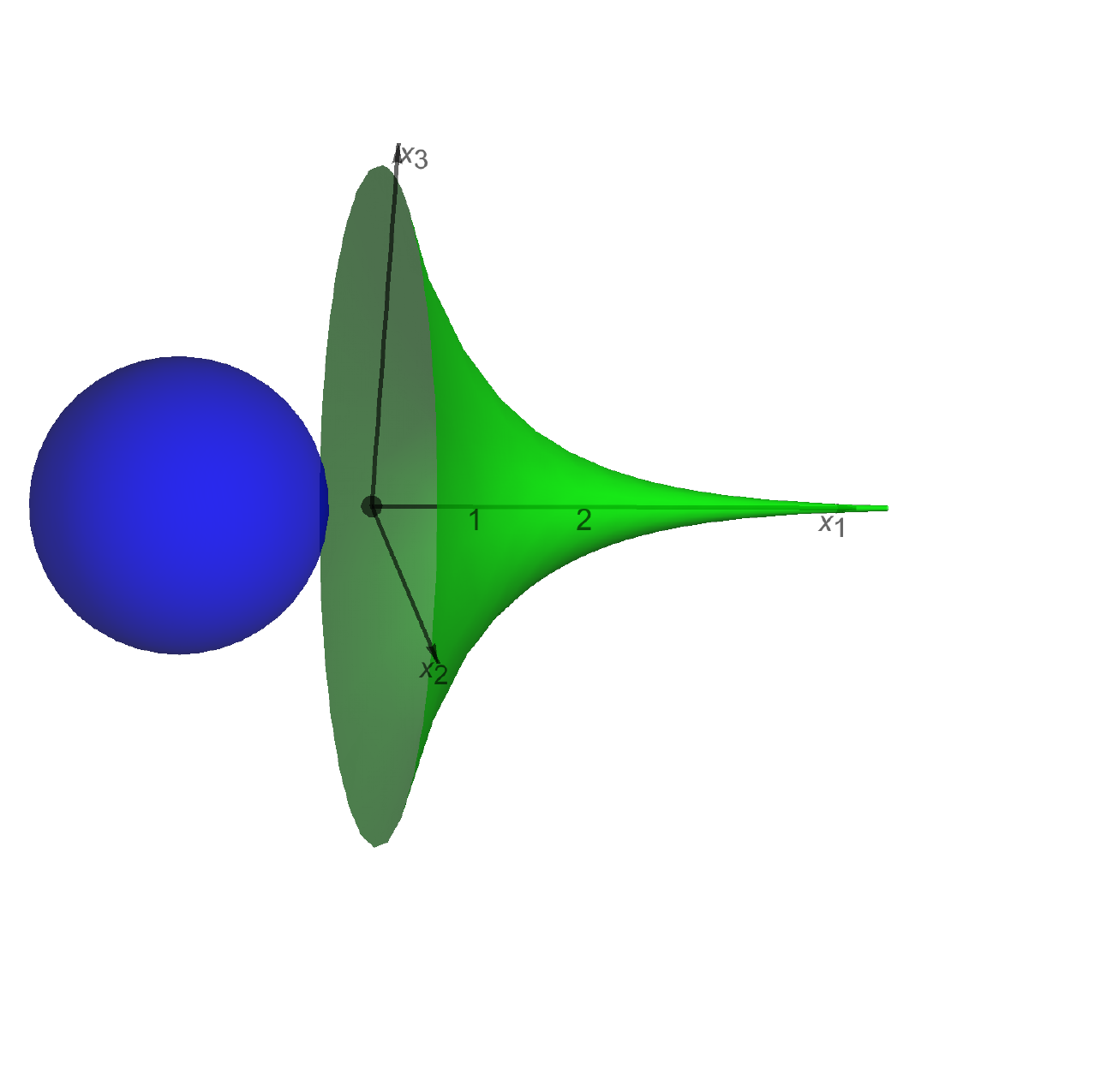}
\vspace{-.8in}
\caption{$\mathbf A=(A_1,A_2)$ in $\mathbb R^3$, where $A_2=\bigl\{0\leqslant x_1<\infty,  \ x_2^2+x_3^2\leqslant\rho^2(x_1)\bigr\}$
with $\rho(x_1)=\exp(-x_1)$ and $A_1$ is a closed ball in
$\mathbb R^3\setminus A_2$.\vspace{-.1in}}
\label{Fig1}
\end{center}
\end{figure}

\begin{example}\label{ex-thin}Let $n=3$ and $\alpha=2$. Define $A_2$ to be a rotation body
\[A_2:=\bigl\{x\in\mathbb R^3: \ 0\leqslant x_1<\infty, \ x_2^2+x_3^2\leqslant\varrho^2(x_1)\bigr\},\]
where $\varrho$ is given by one of the following three formulae:
\begin{align}
\label{c1}\varrho(x_1)&=x_1^{-s}\text{ \ with \ }s\in[0,\infty),\\
\label{c2}\varrho(x_1)&=\exp(-x_1^s)\text{ \ with \ }s\in(0,1],\\
\label{c3}\varrho(x_1)&=\exp(-x_1^s)\text{ \ with \ }s\in(1,\infty),
\end{align}
and let $A_1$ be a closed ball in $\mathbb R^3\setminus A_2$. Then $A_2$ is not $2$-thin at infinity if $\varrho$ is defined by (\ref{c1}), $A_2$ is $2$-thin at infinity but has infinite Newtonian capacity if $\varrho$ is given by (\ref{c2}), and finally $c_{\kappa_2}(A_2)<\infty$ if (\ref{c3}) holds \cite[Example~5.3]{ZPot1}. By Theorem~\ref{th:r}, problem (\ref{r-min}) is therefore solvable for $\mathbf A=(A_1,A_2)$ if $A_2$ is determined either by (\ref{c1}), or by (\ref{c3}), but problem (\ref{r-min}) is \emph{unsolvable\/} if $A_2$ is given by (\ref{c2}) (see Figure~\ref{Fig1}).
\end{example}

\begin{remark}Theorem~\ref{th:r} and Example~\ref{ex-thin} have been illustrated in \cite{OWZ,HWZ} by
means of numerical experiments.\end{remark}

\section{Preliminaries}\label{sec:prel}

\subsection{Measures, vague convergence, capacity}\label{sec:prel1} We shall tacitly use the notation of Section~\ref{sec:intr}. The vague topology on $\mathfrak M(X)$ in general does not possess a countable base, and hence it cannot be described in terms of convergence of sequences. We follow Moore and Smith's theory of convergence, based on the concept of
\emph{nets\/} \cite{MS} (see also \cite[Chapter~2]{K} and \cite[Chapter~0]{E}). However, if $X$ is metrizable and
\emph{countable at infinity}, where the latter means that $X$ can be written as a countable union of compact sets \cite[Chapter~I, Section~9, n$^\circ$\,9]{B1},
then $\mathfrak M(X)$ satisfies the first axiom of
countability \cite[Remark~2.4]{FZ2}, and the use of nets may be
avoided.

\begin{lemma}[{\rm see, e.g., \cite[Section~1.1]{F1}}]\label{lemma:lower}For any\/ $\psi\in\Psi(X)$ the map\/
$\nu\mapsto\langle\psi,\nu\rangle$ is vaguely
l.s.c.\ on\/ $\mathfrak M^+(X)$.\end{lemma}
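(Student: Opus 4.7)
The plan is to realise $\nu\mapsto\langle\psi,\nu\rangle$ as a pointwise supremum of vaguely continuous functionals on $\mathfrak M^+(X)$, whence the lower semicontinuity follows automatically: on any topological space, a pointwise supremum of a family of continuous real-valued functions is l.s.c.

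First I would reduce to the case $\psi\geqslant0$. If $X$ is not compact this is built into the definition of $\Psi(X)$. If $X$ is compact, then $\psi$, being l.s.c.\ on a compact space, attains its infimum $m\in(-\infty,\infty]$; after discarding the trivial case $m=+\infty$, the function $\psi-m$ is l.s.c.\ and $\geqslant 0$. Compactness of $X$ means $\mathbf 1_X\in C_0(X)$, so $\nu\mapsto m\langle\mathbf 1_X,\nu\rangle=m\nu(X)$ is vaguely continuous; adding this continuous functional to $\nu\mapsto\langle\psi-m,\nu\rangle$ preserves lower semicontinuity. So it suffices to treat $\psi\geqslant 0$.

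Next I would invoke the classical approximation identity for positive l.s.c.\ functions on a locally compact space (Bourbaki, \emph{Int\'egration}, Ch.~IV, \S1): for every $\nu\in\mathfrak M^+(X)$,
\[\langle\psi,\nu\rangle=\sup\bigl\{\langle\varphi,\nu\rangle:\ \varphi\in C_0(X),\ 0\leqslant\varphi\leqslant\psi\bigr\}.\]
Granted this, the conclusion is immediate: for each fixed admissible $\varphi$, the map $\nu\mapsto\langle\varphi,\nu\rangle$ is vaguely continuous by the very definition of the vague topology, so $\nu\mapsto\langle\psi,\nu\rangle$ is a supremum of vaguely continuous maps on $\mathfrak M^+(X)$ and therefore vaguely l.s.c.

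The only non-formal ingredient is the approximation identity displayed above, which is where the real work sits; its proof rests on Urysohn's lemma in the locally compact space $X$, used to build, for any compact $K\subset X$ and any step function $\sum_k c_k\mathbf 1_{U_k}\leqslant\psi$ with $U_k$ open and $c_k\geqslant 0$, compactly supported continuous minorants of $\psi$ that recover the integral of the step function in the limit. Everything else in the argument is bookkeeping, so I would simply cite the identity from the literature rather than reproduce its proof.
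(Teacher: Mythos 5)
Your argument is correct and is precisely the standard proof behind the citation to Fuglede (the paper gives no proof of its own): reduce to $\psi\geqslant0$, write $\langle\psi,\nu\rangle$ as the supremum of $\langle\varphi,\nu\rangle$ over $\varphi\in C_0(X)$ with $0\leqslant\varphi\leqslant\psi$, and use that a supremum of vaguely continuous functionals is vaguely l.s.c. The reduction in the compact case via the (finite) infimum $m$ and the vague continuity of $\nu\mapsto\nu(X)$ is also handled correctly.
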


Let a set $Q\subset X$ and a measure $\nu\in\mathfrak M^+(X)$ be given. If $Q$ is $\nu$-meas\-ur\-able, then the indicator function $1_Q$ of
$Q$ is locally $\nu$-int\-egr\-able, and hence one can consider the \emph{trace\/} (restriction) $\nu|_Q=1_Q\cdot\nu$ of $\nu$ on $Q$
\cite[Section~4.14.7]{E}. As in \cite[Section~4.7.3]{E}, $Q$ is said to be \mbox{$\nu$-$\sigma$}-\emph{fi\-ni\-te} if $Q$ is contained in a countable union of $\nu$-integr\-able open sets.\footnote{This necessarily holds if $X$ is countable at infinity or $\nu$ is \emph{bounded}, i.e., with $\nu(X)<\infty$.}
If $Q$ is open or $\nu$-meas\-ur\-able and \mbox{$\nu$-$\sigma$}-fi\-ni\-te, then $\nu_*(Q)=\nu^*(Q)\in[0,\infty]$, where $\nu_*(Q)$ and $\nu^*(Q)$ denote the \emph{inner\/} and the \emph{outer\/} $\nu$-mea\-sure of $Q$, respectively \cite[Eqs.~(4.7.3), (4.7.4)]{E}; and we write $\nu(Q):=\nu_*(Q)=\nu^*(Q)$.

\begin{lemma}\label{up-int} If\/ $Q$ is\/ $\nu$-meas\-ur\-able and \mbox{$\nu$-$\sigma$}-fi\-ni\-te, then for any nonnegative l.s.c.\ function\/ $\psi$ on\/ $X$ we have\/ $\langle\psi,\nu|_Q\rangle=\langle\psi|_Q,\nu\rangle$.\end{lemma}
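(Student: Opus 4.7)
The plan is to reduce the identity, via a standard approximation, to the defining property of the trace measure for continuous compactly supported test functions.

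First, since $Q$ is $\nu$-measurable and $\nu$-$\sigma$-finite, the indicator $1_Q$ is locally $\nu$-integrable, so the trace $\nu|_Q=1_Q\cdot\nu$ is a well-defined positive Radon measure on $X$ \cite[Section~4.14.7]{E}. By the very definition of the product $1_Q\cdot\nu$, one has
\[\langle\varphi,\nu|_Q\rangle=\int\varphi\cdot 1_Q\,d\nu=\langle\varphi|_Q,\nu\rangle \quad\text{for every } \varphi\in C_0^+(X),\]
the last equality being nothing more than the convention $\varphi|_Q=\varphi\cdot1_Q$.

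Next, I would extend this from $C_0^+(X)$ to an arbitrary nonnegative l.s.c.\ $\psi$ on $X$ by the standard representation of $\psi$ on a l.c.s.\ as the upper envelope of an upward-directed family $(\varphi_\iota)\subset C_0^+(X)$ with $\varphi_\iota\leqslant\psi$; see, e.g., Bourbaki \cite[Chapter~IV]{B2}. For the left-hand side, the Beppo--Levi theorem applied to $(\varphi_\iota)$ with respect to $\nu|_Q$ yields
\[\langle\psi,\nu|_Q\rangle=\sup_\iota\,\langle\varphi_\iota,\nu|_Q\rangle.\]
On the right-hand side, the upward-directed family $(\varphi_\iota\cdot 1_Q)$ of nonnegative $\nu$-measurable functions has supremum $\psi\cdot 1_Q=\psi|_Q$, and each member is supported in $Q$; the $\nu$-$\sigma$-finiteness of $Q$ is exactly what makes the upper and essential upper integrals coincide on such functions, allowing the monotone convergence theorem to give
\[\langle\psi|_Q,\nu\rangle=\sup_\iota\,\langle\varphi_\iota|_Q,\nu\rangle.\]
Combining the two displays with the identity of the first paragraph applied to each $\varphi_\iota$ concludes the argument.

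The only non-routine point is the interchange of supremum and integration on the right-hand side: in Bourbaki's framework for a general l.c.s., upper integrals and essential upper integrals need not coincide, and the monotone convergence theorem for upward-directed families is phrased via the upper integral. The hypothesis that $Q$ be \emph{$\nu$-$\sigma$-finite}, which forces all the functions $\varphi_\iota\cdot 1_Q$ and their supremum $\psi|_Q$ to be supported in a countable union of $\nu$-integrable open sets, is precisely what is needed to eliminate this discrepancy and make the limiting step rigorous.
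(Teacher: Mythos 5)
Your treatment of the left-hand side is fine: since $\psi$ is nonnegative and l.s.c., the identity $\langle\psi,\nu|_Q\rangle=\sup_\iota\langle\varphi_\iota,\nu|_Q\rangle$ is just the defining property (Bourbaki, Ch.~IV, \S 1) of the upper integral of a l.s.c.\ function with respect to the Radon measure $\nu|_Q$, and the identity for each $\varphi_\iota\in C_0^+(X)$ is indeed the definition of $1_Q\cdot\nu$. The gap is on the right-hand side, at the very step you flag as ``the only non-routine point''. The family $(\varphi_\iota\cdot 1_Q)_\iota$ is an (in general uncountable) upward-directed family of functions that are merely $\nu$-measurable, not l.s.c.; Bourbaki's monotone convergence theorem for upward-directed families applies only to l.s.c.\ functions, and no analogue holds for arbitrary upward-directed families of measurable functions --- the indicators of the finite subsets of $[0,1]$ form an upward-directed family of Lebesgue-negligible functions whose upper envelope has integral $1$. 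The $\nu$-$\sigma$-finiteness of $Q$ does identify the upper and essential upper integrals of $\psi|_Q$, but that identification does nothing to justify pulling the (uncountable) supremum out of the integral. What you actually need at this point is precisely the formula $\int^*\psi\,d(1_Q\cdot\nu)=\int^*\psi\,1_Q\,d\nu$, i.e.\ the lemma itself, so the argument is circular at its crux.

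Your route does go through when $X$ is metrizable and countable at infinity, for then one can choose an increasing \emph{sequence} $\varphi_n\uparrow\psi$ in $C_0^+(X)$ and apply the sequential Beppo--Levi theorem for upper integrals, which is valid for arbitrary positive functions; but the lemma is needed for a general l.c.s.\ $X$, where no countable cofinal subfamily exists (consider $\psi\equiv1$ on an uncountable discrete space). The paper's proof avoids the issue entirely by citing the general theorem on integration with respect to a measure with density (Edwards, Proposition~4.14.1 and Eq.~(4.14.8)), which is exactly the result your missing step amounts to. To repair the proof, replace the monotone-convergence appeal by a direct invocation of that theorem (or of its counterpart in Bourbaki, Ch.~V), at which point the preliminary reduction to $C_0^+(X)$ becomes superfluous.
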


\begin{proof} Applying first \cite[Proposition~4.14.1(b)]{E} and \cite[Eq.~(4.14.8)]{E} to $\psi|_Q$ and $\nu$,
and then applying \cite[Proposition~4.14.1(a)]{E} to $\psi$ and $\nu|_Q$, we arrive at our claim.\end{proof}

\begin{theorem}\label{Portmanteau}Let\/ $X$ be metrizable and countable at infinity. If a sequence\/ $\{\nu_k\}_{k\in\mathbb N}\subset\mathfrak M^+(X)$ converges to\/ $\nu$ vaguely, then
for any relatively compact Borel set\/ $Q\subset X$ with\/ $\nu(\partial_XQ)=0$ we have\/ $\nu_k|_Q\to\nu|_Q$ vaguely as\/ $k\to\infty$.\footnote{If $X$ is an open subset of $\mathbb R^n$, $n\geqslant2$, then Theorem~\ref{Portmanteau} is, in fact, \cite[Theorem~0.5$'$]{L}.}
\end{theorem}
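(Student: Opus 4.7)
The plan is a standard Portmanteau-type sandwich argument. Since vague convergence of $\nu_k|_Q$ to $\nu|_Q$ means $\langle \varphi, \nu_k|_Q\rangle\to\langle\varphi,\nu|_Q\rangle$ for every $\varphi\in C_0(X)$, and since by Lemma~\ref{up-int} (applied to $|\varphi|\pm\varphi$ and linearity) we have $\langle\varphi,\nu_k|_Q\rangle=\int\varphi\,1_Q\,d\nu_k$, it is enough to verify this limit for an arbitrary nonnegative $\varphi\in C_0(X)$. Since $Q$ is relatively compact and Borel, $\overline{Q}$ is compact, $Q^\circ$ is Borel, and $\partial_X Q=\overline{Q}\setminus Q^\circ$, so by hypothesis $\nu(\overline{Q}\setminus Q^\circ)=0$. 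In particular, $Q$ is $\nu$-integrable and $\nu_k$-integrable, so all these traces make sense.

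First I would pick, using local compactness, a compact neighborhood $W$ of $\overline{Q}$. For each sufficiently small $\delta>0$, using the metric $d$ on $X$, I would set $F_\delta:=\{x\in Q^\circ:\ d(x,X\setminus Q^\circ)\geqslant\delta\}$ (closed) and $G_\delta:=\{x\in X:\ d(x,\overline{Q})<\delta\}$ (open, contained in $W$ for $\delta$ small), and invoke Urysohn's lemma to produce $u_\delta,v_\delta\in C_0(X)$, supported in $W$, with values in $[0,1]$, such that $u_\delta=1$ on $F_\delta$, $u_\delta=0$ off $Q^\circ$, $v_\delta=1$ on $\overline{Q}$, and $v_\delta=0$ off $G_\delta$. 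Then pointwise
\[
u_\delta\leqslant 1_{Q^\circ}\leqslant 1_Q\leqslant 1_{\overline{Q}}\leqslant v_\delta,
\]
and as $\delta\downarrow 0$ we have $u_\delta\uparrow 1_{Q^\circ}$ and $v_\delta\downarrow 1_{\overline{Q}}$ pointwise on $X$.

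Next I would apply vague convergence to the test functions $\varphi u_\delta,\varphi v_\delta\in C_0(X)$ and use the pointwise inequalities to sandwich the quantity of interest:
\[
\int\varphi u_\delta\,d\nu=\lim_{k\to\infty}\int\varphi u_\delta\,d\nu_k\leqslant\liminf_{k\to\infty}\int\varphi\,1_Q\,d\nu_k\leqslant\limsup_{k\to\infty}\int\varphi\,1_Q\,d\nu_k\leqslant\int\varphi v_\delta\,d\nu.
\]
Letting $\delta\downarrow 0$ and using monotone (or dominated, with dominant $\varphi\cdot 1_W\in L^1(\nu)$) convergence, the left-hand side tends to $\int\varphi\,1_{Q^\circ}\,d\nu$ and the right-hand side to $\int\varphi\,1_{\overline{Q}}\,d\nu$. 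But the condition $\nu(\partial_X Q)=0$ forces both of these to equal $\int\varphi\,1_Q\,d\nu$, collapsing the sandwich and yielding $\lim_k\int\varphi\,1_Q\,d\nu_k=\int\varphi\,1_Q\,d\nu$. Splitting a general $\varphi\in C_0(X)$ into positive and negative parts gives the claim.

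The only delicate points are the construction of the sandwich functions with compact support (which is why relative compactness of $Q$ and local compactness of $X$ are used together with the metric) and the justification of passing $\delta\to 0$ inside the integral; neither is a real obstacle, since both reduce to standard monotone/dominated convergence once $u_\delta,v_\delta$ have been controlled by the compactly supported majorant $1_W$. The hypothesis that $X$ is metrizable enters precisely to enable the Urysohn-type construction via distance functions and to work with sequences rather than nets in the vague convergence statement.
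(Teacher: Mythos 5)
Your argument is correct, but it follows a genuinely different route from the paper's. The paper's proof is a two-step reduction to the literature: it first invokes the Portmanteau theorem in the form of \cite[Theorem~2.1]{Lin} to obtain the convergence of masses $\nu_k(Q)\to\nu(Q)$, and then repeats Landkof's argument from the proof of \cite[Theorem~0.5$'$]{L}, which upgrades convergence of masses on relatively compact sets with $\nu$-negligible boundary to vague convergence of the restricted measures (a partition/Riemann-sum approximation of the test function over small pieces with null boundaries). You instead prove the required limit $\int\varphi 1_Q\,d\nu_k\to\int\varphi 1_Q\,d\nu$ directly for each nonnegative $\varphi\in C_0(X)$ by sandwiching $1_Q$ between compactly supported continuous functions $u_\delta\leqslant 1_{Q^\circ}$ and $v_\delta\geqslant 1_{\overline Q}$ built from distance functions, and letting $\delta\downarrow 0$ under the integrable majorant $\|\varphi\|_\infty 1_W$; the hypothesis $\nu(\partial_XQ)=0$ collapses the sandwich. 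Your approach buys a self-contained proof that makes transparent exactly where metrizability (distance functions for the cut-offs) and relative compactness of $Q$ (the compact majorant $W$) enter, at the cost of redoing work the paper simply cites. Two minor points, both of which you essentially anticipate: the families produced by a bare appeal to Urysohn need not be monotone in $\delta$, so one should either take the explicit monotone choices $u_\delta=\min\bigl(1,\delta^{-1}d(\cdot,X\setminus Q^\circ)\bigr)$ and $v_\delta=\max\bigl(0,1-\delta^{-1}d(\cdot,\overline Q)\bigr)$ or pass to dominated convergence along a sequence $\delta_m\downarrow0$; and the identification $\langle\varphi,\nu_k|_Q\rangle=\int\varphi 1_Q\,d\nu_k$ for signed $\varphi\in C_0(X)$ is indeed covered by splitting into the nonnegative functions $|\varphi|\pm\varphi$ and applying Lemma~\ref{up-int}, since a relatively compact Borel set is automatically $\nu$-measurable and $\nu$-$\sigma$-finite.
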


\begin{proof} The Portmanteau theorem in the form stated in \cite[Theorem~2.1]{Lin} shows that under the hypotheses of Theorem~\ref{Portmanteau},
\[\lim_{k\to\infty}\,\nu_k(Q)=\nu(Q).\]
Applying now to $X$, $Q$, $\nu_k$ and $\nu$ the same arguments as in \cite[Proof of Theorem~0.5$'$]{L}, the only difference being in using the preceding display in place of \cite[Theorem~0.5]{L}, we establish the theorem.\end{proof}

Let $\mathfrak M^+(Q)$ consist of all $\nu\in\mathfrak M^+(X)$ \emph{carried by\/} $Q$, which means that $Q^c:=X\setminus Q$ is locally $\nu$-neg\-lig\-ible, or equivalently that $Q$ is $\nu$-meas\-ur\-able and $\nu=\nu|_Q$. If $Q^c$ is open or \mbox{$\nu$-$\sigma$}-fi\-ni\-te, then the concept of local $\nu$-neg\-lig\-ib\-ility for $Q^c$ coincides with that of $\nu$-neg\-lig\-ib\-ility; and hence $\nu\in\mathfrak M^+(Q)$ if and only if $\nu^*(Q^c)=0$. Therefore, $\nu$ is carried by a \emph{closed\/} $Q$ if and only if it is supported by $Q$; that is, $S(\nu)\subset Q$, where $S(\nu)$ is the support of~$\nu$.

In all that follows, $\kappa$ is a \emph{positive definite\/} kernel on $X$ (Section~\ref{sec:intr}). For any $Q\subset X$, write $\mathcal E^+_\kappa(Q):=\mathcal E_\kappa(X)\cap\mathfrak M^+(Q)$.
The (\emph{inner\/}) \emph{capacity\/} of $Q$ is given by the formula
\begin{equation}\label{cap-def}c_\kappa(Q):=\Bigl[\,\inf_{\nu\in\mathcal
E_\kappa^+(Q):\ \nu(Q)=1}\,\kappa(\nu,\nu)\Bigr]^{-1}\end{equation}
(see, e.g., \cite{F1,O}). Then $0\leqslant c_\kappa(Q)\leqslant\infty$. (As usual, the
infimum over the empty set is taken to be $+\infty$. We also set
$1\bigl/(+\infty)=0$ and $1\bigl/0=+\infty$.)

A proposition $\mathcal P(x)$ involving a variable point $x\in X$ is said to hold \emph{$c_\kappa$-nearly
everywhere\/} (\emph{$c_\kappa$-n.e.\/}) on $Q$ if
$c_\kappa(N)=0$, where $N$ consists of all $x\in Q$ for which
$\mathcal P(x)$ fails.  We write briefly `n.e.' in
place of `$c_\kappa$-n.e.' if this does not cause any
misunderstanding, and we omit `on $Q$' if $Q=X$.

\begin{lemma}[{\rm see \cite[Lemma~2.3.1]{F1}}]\label{cap0}$c_\kappa(Q)=0$\,$\iff$\,$\mathcal E^+_\kappa(Q)=\{0\}$.\end{lemma}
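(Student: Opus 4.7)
\emph{Plan.} The equivalence is essentially built into the conventions $1/(+\infty)=0$ and $1/0=+\infty$ in the definition (\ref{cap-def}), so the strategy is to show that the infimum in (\ref{cap-def}) equals $+\infty$ precisely when $\mathcal E^+_\kappa(Q)=\{0\}$. I would treat the two directions separately and keep the argument short.

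For the direction $(\Leftarrow)$, assume $\mathcal E^+_\kappa(Q)=\{0\}$. Then the only candidate measure has $\nu(Q)=0$, so the set of admissible $\nu$ in (\ref{cap-def}) is empty; by convention the infimum is $+\infty$, whence $c_\kappa(Q)=0$.

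For the direction $(\Rightarrow)$, I would argue by contraposition: assume there exists a nonzero $\nu\in\mathcal E^+_\kappa(Q)$, and produce a normalized competitor in (\ref{cap-def}), which will force the infimum to be finite and hence $c_\kappa(Q)>0$. Since $\nu$ is a nonzero Radon measure, one can select a compact $K\subset X$ with $0<\nu(K)<\infty$. The restriction $\nu|_K=1_K\cdot\nu$ is again a positive Radon measure, and since $Q^c$ is locally $\nu$-negligible, for every compact $L$ one has $\nu|_K(Q^c\cap L)=\nu(Q^c\cap K\cap L)=0$ (the set $Q^c\cap K$ being $\nu$-negligible as a subset of a compact set). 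Thus $\nu|_K$ is again carried by $Q$, with $\nu|_K(Q)=\nu(K)>0$. Setting $\mu:=\nu|_K/\nu(K)$ then gives $\mu\in\mathfrak M^+(Q)$ with $\mu(Q)=1$; and because $\kappa\geqslant0$ (unless $X$ is compact, in which case $\nu$ itself is bounded and one may normalize $\nu$ directly), one has $\kappa(\mu,\mu)\leqslant\kappa(\nu,\nu)/\nu(K)^2<\infty$. So $\mu$ is admissible in (\ref{cap-def}), the infimum there is finite, and $c_\kappa(Q)>0$.

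The only genuinely delicate point is verifying that $\nu|_K$ inherits the property of being carried by $Q$ (rather than only by some larger set), and, if $X$ is compact, noting that no truncation is needed because $\kappa$ is then bounded below and $\nu$ is automatically finite. Everything else is immediate from the definitions.
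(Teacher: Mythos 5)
Your argument is correct. Note that the paper itself gives no proof of this lemma --- it is quoted directly from Fuglede \cite[Lemma~2.3.1]{F1} --- and your verification is the standard one: the backward direction is immediate from the convention that the infimum over the empty set is $+\infty$, and for the forward direction (by contraposition) restricting a nonzero $\nu\in\mathcal E^+_\kappa(Q)$ to a compact set of positive measure produces a normalizable competitor whose energy is controlled by $\kappa(\nu,\nu)$ since $\kappa\geqslant0$ when $X$ is noncompact, the compact case being handled by normalizing $\nu$ itself. The two points you flag as delicate (that $\nu|_K$ is still carried by $Q$, and the separate treatment of compact $X$) are exactly the right ones, and both are handled correctly.
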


\subsection{Consistent and perfect kernels}\label{sec:cons} In addition to the strong topology on $\mathcal E_\kappa(X)$, determined by the seminorm $\|\cdot\|_\kappa$ (see Section~\ref{sec:intr}),
it is often useful to consider the so-called \emph{weak\/} topology on $\mathcal E_\kappa(X)$, defined by
means of the seminorms $\nu\mapsto|\kappa(\nu,\mu)|$, where $\mu\in\mathcal E_\kappa(X)$ \cite{F1}. By the Cauchy--Schwarz (Bunyakovski) inequality
\begin{equation*}
|\kappa(\mu,\nu)|\leqslant\|\mu\|_\kappa\cdot\|\nu\|_\kappa,\text{ \ where\ }
\mu,\nu\in\mathcal E_\kappa(X),\end{equation*}
the strong topology on $\mathcal E_\kappa(X)$ is \emph{finer\/} than the weak topology.

Following Fuglede \cite{F1,F2}, we call a (positive definite)
kernel $\kappa$ \emph{consistent\/} if it satisfies either of the following two \emph{equivalent\/} properties:
\begin{itemize}
\item[\rm(C$_1$)] \emph{Every strong Cauchy net in\/
$\mathcal E^+_\kappa(X)$ converges strongly to any of its vague cluster
points\/ {\rm(}whenever these exist\/{\rm)}.}
\item[\rm(C$_2$)] \emph{Every strongly bounded and vaguely convergent net
in\/ $\mathcal E^+_\kappa(X)$ converges weakly to its vague limit.}
\end{itemize}

A kernel $\kappa$ is called \emph{perfect\/} if it is consistent and strictly positive definite \cite[Theorem~3.3]{F1}, or equivalently if the following two conditions are fulfilled  (see \cite[p.~166]{F1}):
\begin{itemize}
\item[\rm(P$_1$)] \emph{$\mathcal E^+_\kappa(X)$ is complete in the induced strong topology.}
\item[\rm(P$_2$)] \emph{The strong topology on\/ $\mathcal E^+_\kappa(X)$ is finer than the induced vague topology on\/ $\mathcal E^+_\kappa(X)$.}
\end{itemize}

\begin{example}\label{rem:clas}On $X=\mathbb R^n$, $n\geqslant3$, the $\alpha$-Riesz kernel $\kappa_\alpha$, $\alpha\in(0,n)$, is strictly positive definite and consistent, and hence altogether perfect \cite{D1}; thus so is the Newtonian kernel $\kappa_2(x,y)=|x-y|^{2-n}$ \cite{Ca}. Recently it has been shown that, if $X=D$ where $D$ is an arbitrary open set in $\mathbb R^n$, $n\geqslant3$, and $G^\alpha_D$, $\alpha\in(0,2]$, is the $\alpha$-Green kernel on $D$ \cite[Chapter~IV, Section~5]{L}, then $\kappa=G^\alpha_D$ is likewise perfect \cite[Theorems~4.9, 4.11]{FZ}. Furthermore, the $2$-Green kernel on a planar $2$-Green\-ian set is strictly positive definite by \cite[Chapter~XIII, Section~7]{Doob} and it is consistent by \cite{E2}, and hence altogether perfect. The logarithmic kernel $-\log\,|x-y|$ on a closed disc in $\mathbb R^2$ of radius ${}<1$ is strictly positive definite, as shown in  \cite[Theorem~1.16]{L}. It is therefore perfect (see \cite{O1}), because it satisfies Frostman's maximum principle by \cite[Theorem~1.6]{L}, and hence is regular by \cite[Eq.~(1.3)]{O}. For analogous results concerning the logarithmic kernel on closed balls of arbitrary finite dimension, see \cite{F1a}.\end{example}

\begin{remark}\label{remma}In contrast to (P$_1$), for a perfect kernel $\kappa$ the whole pre-Hil\-bert space $\mathcal E_\kappa(X)$ is in general strongly \emph{incomplete}, and this is the case even for the $\alpha$-Riesz kernel of order $\alpha\in(1,n)$ on $\mathbb R^n$, $n\geqslant 3$
\cite{Ca}.\end{remark}

\begin{remark}\label{remark}The concept of consistent kernel is an efficient tool in minimum energy problems
over classes of \emph{positive scalar\/} Radon measures with
finite energy. Indeed, if $Q$ is closed, $c_\kappa(Q)\in(0,\infty)$, and $\kappa$ is consistent, then the minimum energy problem in (\ref{cap-def}) has a solution $\lambda$ \cite[Theorem~4.1]{F1}; we shall call this $\lambda$ an (\emph{inner\/}) $\kappa$-\emph{cap\-ac\-itary measure\/} on $Q$. (This $\lambda$ is unique if $\kappa$ is strictly positive definite.)
Later the concept of consistency has been shown to be efficient also in minimum energy problems over classes of vector measures of finite or infinite dimensions associated with a standard condenser \mbox{\cite{ZPot1}--\cite{ZPot3}}. The approach developed in \mbox{\cite{ZPot1}--\cite{ZPot3}} substantially used the assumption of the boundedness of the kernel on the Cartesian product of the oppositely charged plates of a condenser, which made it possible to extend Cartan's proof \cite{Ca} of the strong completeness of the cone $\mathcal E_{\kappa_2}^+(\mathbb R^n)$ of all positive measures on $\mathbb R^n$ with finite Newtonian energy to an arbitrary consistent kernel $\kappa$ on a l.c.s.\ $X$ and suitable classes of (\emph{signed\/}) measures $\mu\in\mathcal E_\kappa(X)$ (compare with Theorem~\ref{th:str} as well as Remark~\ref{remma} above).\end{remark}

\subsection{Nearly closed sets}\label{sec:n.c} The following concept seems to be new (a private communication with Bent Fuglede).

\begin{definition}\label{def:nearly}A set $Q\subset X$ is said
to be \emph{nearly closed}, resp.\ \emph{nearly compact}, if
there exists a closed, resp.\ compact, set $\breve{Q}\subset X$ such
that \[c_\kappa(Q\bigtriangleup\breve{Q})=0,\text{ \ where \ }Q\bigtriangleup\breve{Q}:=(Q\setminus\breve{Q})\cup(\breve{Q}\setminus Q).\]
\end{definition}

\begin{lemma}\label{l:cl-q}For any nearly closed set\/ $Q$, $\mathcal E^+_\kappa(Q)=\mathcal E^+_\kappa(\breve{Q})$.
\end{lemma}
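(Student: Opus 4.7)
The plan is to set $N:=Q\bigtriangleup\breve{Q}$ and exploit the following consequence of the hypothesis $c_\kappa(N)=0$: by monotonicity of inner capacity under set inclusion together with Lemma~\ref{cap0}, every $\sigma\in\mathcal E_\kappa(X)\cap\mathfrak M^+(E)$ with $E\subseteq N$ is necessarily the zero measure. Both desired inclusions then reduce to saying that the symmetric difference $N$ carries no mass of finite energy.

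For the inclusion $\mathcal E^+_\kappa(Q)\subseteq\mathcal E^+_\kappa(\breve{Q})$, given $\nu\in\mathcal E^+_\kappa(Q)$, I would form the trace $\mu:=\nu|_{\breve{Q}^c}$, which is well defined because $\breve{Q}^c$ is open, hence Borel. Being dominated by $\nu$, $\mu$ has finite energy, and it is carried by $\breve{Q}^c\cap Q=Q\setminus\breve{Q}\subseteq N$: its complement $\breve{Q}\cup Q^c$ is the union of $\breve{Q}$ (where $\mu$ vanishes by construction, since $\mu(\breve{Q})=\nu(\breve{Q}\cap\breve{Q}^c)=0$) and $Q^c$ (locally $\nu$-, hence locally $\mu$-negligible). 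The vanishing criterion above then forces $\mu=0$, i.e.\ $\nu(\breve{Q}^c)=0$, and as $\breve{Q}$ is closed this precisely means $\nu$ is carried by $\breve{Q}$.

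For the reverse inclusion $\mathcal E^+_\kappa(\breve{Q})\subseteq\mathcal E^+_\kappa(Q)$, given $\nu\in\mathcal E^+_\kappa(\breve{Q})$, I would verify that $Q^c$ is locally $\nu$-negligible by decomposing $Q^c=(\breve{Q}\setminus Q)\cup(\breve{Q}^c\cap Q^c)$. The second summand lies in $\breve{Q}^c$ and is locally $\nu$-negligible by hypothesis. The first summand lies in $N$; for any compact $K\subseteq X$ and any compact $K'\subseteq N\cap K$, the trace $\nu|_{K'}$ has finite energy and is carried by $K'$, so the vanishing criterion gives $\nu(K')=0$. Inner regularity of the Radon measure $\nu$ on Borel sets then upgrades this to $\nu(N\cap K)=0$, which is the required local $\nu$-negligibility.

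The main obstacle I anticipate is exactly this last upgrade, from ``every compact subset of $N$ has $\nu$-measure zero'' to local $\nu$-negligibility of $\breve{Q}\setminus Q$ itself. In the applications within the paper the set $Q$ is always Borel (it is a plate of a generalized condenser), so $N$ is Borel and inner regularity of $\nu$ settles the matter directly; in full generality one appeals to the standard fact, implicit in the Fuglede framework used throughout, that a set of zero inner capacity is locally negligible for every measure in $\mathcal E^+_\kappa(X)$, which itself follows from the same trace-and-Lemma~\ref{cap0} reasoning combined with the regularity of Radon measures on a locally compact space.
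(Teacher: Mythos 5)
Your proof is correct and follows essentially the same route as the paper's: the paper likewise reduces both inclusions to the observation (via Lemma~\ref{cap0}) that the symmetric difference $Q\bigtriangleup\breve{Q}$, having zero inner capacity, cannot carry a nonzero measure of finite energy, and obtains the converse inclusion by symmetry in $Q$ and $\breve{Q}$. Your version merely spells out the measure-theoretic details (traces, local negligibility) that the paper leaves implicit, and the point you flag at the end is indeed covered by the standard fact, already invoked elsewhere in the paper via \cite[Lemma~2.3.1]{F1}, that a set of zero inner capacity is locally negligible for every measure of finite energy.
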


\begin{proof}Note that $Q=[\breve{Q}\cup(Q\setminus\breve{Q})]\setminus(\breve Q\setminus Q)$. Since any set in $X$ with
$c_\kappa(\cdot)=0$ cannot carry any nonzero measure from $\mathcal
E_\kappa^+(X)$ (cf.\ Lemma\/~{\rm\ref{cap0}}), $\mathcal
E^+_\kappa(Q)\subset\mathcal E^+_\kappa(\breve{Q})$. Having reversed $Q$ and $\breve{Q}$, we obtain the converse
inclusion.\end{proof}

\begin{lemma}\label{l:aux2}If a set\/ $Q\subset X$ is nearly closed, then the truncated cone\/
$\{\nu\in\mathcal E^+_\kappa(Q): \|\nu\|_\kappa\leqslant1\}$ is
closed in the induced vague topology.\end{lemma}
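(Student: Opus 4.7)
The plan is to reduce the claim to the analogous statement for the genuinely closed set $\breve{Q}$ provided by Definition~\ref{def:nearly}, and then invoke two standard facts: the vague closedness of $\mathfrak M^+(\breve Q)$ for closed $\breve Q$, together with the vague lower semicontinuity of the energy functional $\nu\mapsto\kappa(\nu,\nu)$ on $\mathfrak M^+(X)$.

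More precisely, first I would apply Lemma~\ref{l:cl-q} to identify
\[\{\nu\in\mathcal E^+_\kappa(Q):\|\nu\|_\kappa\leqslant 1\}=\{\nu\in\mathcal E^+_\kappa(\breve Q):\|\nu\|_\kappa\leqslant 1\},\]
so that it is enough to check vague closedness when the carrying set is the closed set $\breve Q$. Next, let $\{\nu_\alpha\}$ be a net in the set on the right converging vaguely to some $\nu\in\mathfrak M^+(X)$. For any $\varphi\in C_0(X)$ with $\mathrm{supp}\,\varphi\subset X\setminus\breve Q$ (an open set) one has $\langle\varphi,\nu_\alpha\rangle=0$ for all $\alpha$, whence $\langle\varphi,\nu\rangle=0$; this shows $S(\nu)\subset\breve Q$, i.e.\ $\nu\in\mathfrak M^+(\breve Q)$.

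It remains to verify that $\kappa(\nu,\nu)\leqslant 1$. Since $X$ is not assumed compact, $\kappa$ is a nonnegative l.s.c.\ function on $X\times X$ by the definition of $\Psi(X\times X)$ (the compact case is handled by an additive shift). Vague convergence $\nu_\alpha\to\nu$ on $X$ implies vague convergence $\nu_\alpha\otimes\nu_\alpha\to\nu\otimes\nu$ on $X\times X$, so Lemma~\ref{lemma:lower} applied to the l.s.c.\ function $\kappa$ on the product space yields
\[\kappa(\nu,\nu)=\langle\kappa,\nu\otimes\nu\rangle\leqslant\liminf_\alpha\,\langle\kappa,\nu_\alpha\otimes\nu_\alpha\rangle=\liminf_\alpha\,\|\nu_\alpha\|_\kappa^2\leqslant 1.\]
Thus $\nu\in\mathcal E^+_\kappa(\breve Q)=\mathcal E^+_\kappa(Q)$ with $\|\nu\|_\kappa\leqslant 1$, proving the asserted vague closedness.

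I do not expect a serious obstacle: the only subtle point is the vague lower semicontinuity of the energy, but this is classical once one notes that vague convergence is preserved under the tensor product. The reduction to the closed set via Lemma~\ref{l:cl-q} is what makes the near-closedness hypothesis do its work, since for an arbitrary set $Q$ the cone $\mathfrak M^+(Q)$ need not be vaguely closed.
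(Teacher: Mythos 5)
Your proposal is correct and follows essentially the same route as the paper: reduce to the closed set $\breve Q$ via Lemma~\ref{l:cl-q}, then combine the vague closedness of $\mathfrak M^+(\breve Q)$ with the vague lower semicontinuity of the energy $\nu\mapsto\kappa(\nu,\nu)$ on $\mathfrak M^+(X)$. The only difference is that the paper simply cites \cite[Lemma~2.2.1(e)]{F1} for the latter fact, whereas you sketch its proof via the vague continuity of $\nu\mapsto\nu\otimes\nu$; that sketch is sound.
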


\begin{proof}As seen from Lemma~\ref{l:cl-q}, it is enough to establish the lemma for
$\breve{Q}$ in place of $Q$. Since $\mathfrak M^+(\breve{Q})$ is vaguely
closed, $\breve{Q}$ being closed in
$X$, and since the energy $\kappa(\nu,\nu)$ is vaguely l.s.c.\ on
$\mathfrak M^+(X)$ \cite[Lemma~2.2.1(e)]{F1}, the lemma follows.
\end{proof}

\section{Vector measures. Their energies and potentials}\label{sec:def}

\subsection{Vector
measures} Fix a countable set  $I$ of indices $i\in\mathbb N$, and
consider the Cartesian product $\mathfrak
M^+(X)^{\mathrm{Card}\,I}$, equipped with the vague product space
topology. Elements $\boldsymbol\mu=(\mu^i)_{i\in I}$ of $\mathfrak
M^+(X)^{\mathrm{Card}\,I}$, where $\mu^i\in\mathfrak M^+(X)$ for all
$i\in I$, are termed positive
$({\mathrm{Card}\,I})$-dim\-ens\-ion\-al \emph{vector measures\/}
on $X$.

\begin{definition}\label{def:v:b}A set $\mathfrak F\subset\mathfrak
M^+(X)^{\mathrm{Card}\,I}$ is said to be \emph{vaguely bounded\/} if for
every $\varphi\in C_0(X)$,
\[\sup_{\mu\in\mathfrak F}\,|\mu^i(\varphi)|<\infty\text{ \ for all\ }i\in
I.\]\end{definition}

\begin{lemma}\label{lem:vaguecomp} A vaguely bounded set\/
$\mathfrak F\subset\mathfrak M^+(X)^{\mathrm{Card}\,I}$ is vaguely
relatively compact.\end{lemma}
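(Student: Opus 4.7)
The plan is to reduce the statement to the well-known scalar version of the result and then invoke Tychonoff's theorem on the product space. The scalar analogue asserts that a vaguely bounded subset of $\mathfrak M^+(X)$ has compact vague closure; this is standard (see, e.g., \cite[Chapter III, \S 1, Proposition 15]{B2} or the corresponding statement in \cite{E,F1}), and ultimately follows from the Banach--Alaoglu theorem applied to the topological dual of $C_0(X)$ equipped with the inductive-limit topology.

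First, I would fix $i \in I$ and consider the $i$-th coordinate projection
\[\pi_i(\mathfrak F) := \bigl\{\mu^i : \boldsymbol\mu = (\mu^j)_{j\in I} \in \mathfrak F\bigr\} \subset \mathfrak M^+(X).\]
The assumption that $\mathfrak F$ is vaguely bounded (Definition~\ref{def:v:b}) precisely means that $\sup_{\nu \in \pi_i(\mathfrak F)}\,|\nu(\varphi)| < \infty$ for every $\varphi \in C_0(X)$, i.e., $\pi_i(\mathfrak F)$ is a vaguely bounded subset of $\mathfrak M^+(X)$. By the scalar result just quoted, the vague closure $K_i := \overline{\pi_i(\mathfrak F)}$ is vaguely compact in $\mathfrak M^+(X)$.

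Next, I would note the obvious inclusion $\mathfrak F \subset \prod_{i \in I} K_i$. Since each factor $K_i$ is vaguely compact and the vague topology on $\mathfrak M^+(X)^{\mathrm{Card}\,I}$ is by definition the product topology, Tychonoff's theorem \cite[Chapter 5]{K} ensures that $\prod_{i \in I} K_i$ is compact in the vague product topology. Consequently $\mathfrak F$ is contained in a vaguely compact set, which yields the vague relative compactness of $\mathfrak F$ as claimed.

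There is no real obstacle here: the only substantive input is the scalar vague-compactness theorem, which the paper takes for granted from the cited references on measure theory in locally compact spaces, and the passage to the product is a one-line application of Tychonoff. The only care to be taken is the distinction between vague boundedness of $\mathfrak F$ and coordinatewise vague boundedness of the projections $\pi_i(\mathfrak F)$, but these coincide immediately from Definition~\ref{def:v:b}.
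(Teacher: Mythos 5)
Your proposal is correct and follows essentially the same route as the paper: coordinatewise vague boundedness of the projections, the scalar relative-compactness result in $\mathfrak M^+(X)$ from Bourbaki, and Tychonoff's theorem applied to the product of the (closures of the) projections. The only cosmetic difference is that you explicitly pass to the closures $K_i$ before invoking Tychonoff, which the paper leaves implicit.
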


\begin{proof}It is clear from the above definition that for every $i\in
I$, the set
\[\mathfrak F^i:=\bigl\{\mu^i\in\mathfrak M^+(X):  \
\boldsymbol\mu=(\mu^j)_{j\in I}\in\mathfrak F\bigr\}\] is vaguely
bounded, and hence vaguely relatively compact in $\mathfrak M^+(X)$
\cite[Chapter~III, Section~2, Proposition~9]{B2}. Since $\mathfrak
F\subset\prod_{i\in I}\,\mathfrak F^i$, the lemma follows from
Tychonoff's theorem on the product of compact spaces
\cite[Chapter~I, Section~9, Theorem~3]{B1}.\end{proof}

Since $\mathfrak M^+(X)$ is Hausdorff in the vague topology, so is
$\mathfrak M^+(X)^{\mathrm{Card}\,I}$ \cite[Chapter~I, Section~8,
Proposition~7]{B1}, and hence a vague limit of any net
$(\boldsymbol\mu_s)_{s\in S}\subset\mathfrak
M^+(X)^{\mathrm{Card}\,I}$ is \emph{unique\/} if it exists. (Throughout the paper, $S$ denotes an upper directed set of indices~$s$.)

\subsection{Generalized and standard condensers}
Assume that $I=I^+\cup I^-$, where  $I^+\cap I^-=\varnothing$ and $I^-$ is allowed to be empty, and there corresponds to every $i\in I$ a nonempty \emph{Borel\/} set
$A_i\subset X$.

\begin{definition}\label{def:cond} A collection $\mathbf A=(A_i)_{i\in I}$ is termed a \emph{generalized\/ $(I^+,I^-)$-con\-denser\/} (or simply
a \emph{generalized condenser\/}) in $X$ if every compact subset
of $X$ intersects with at most finitely many $A_i$, and moreover
\begin{equation}
A_i\cap A_j=\varnothing\text{ \ for all\ } i\in I^+, \ j\in I^-.
\label{non}
\end{equation}\end{definition}

Writing
\begin{equation}\label{sign}s_i:={\rm sign}\,A_i:=\left\{
\begin{array}{lll} +1&\text{if}&i\in I^+,\\
-1&\text{if}&i\in I^-,\\ \end{array} \right.
\end{equation}
we call $A_i$, $i\in I^+$, and $A_j$, $j\in I^-$,
\emph{positive\/} and \emph{negative plates\/} of the
generalized condenser $\mathbf A$.
Note that any two equally sign\-ed plates may intersect each other
or even coincide. Also note that, although any two oppositely signed
plates are disjoint by (\ref{non}), their closures in $X$ may intersect each other (actually, even in a set of nonzero capacity).\footnote{This remains valid
even in the case where the $A_i$, $i\in I$, are nearly closed.}
Furthermore, it follows from above definition that the sets $A^+:=\bigcup_{i\in I^+}\,A_i$ and $A^-:=\bigcup_{j\in
I^-}\,A_j$ are \emph{Borel\/} and \emph{disjoint}, which will be used substantially in all that follows.

\begin{lemma}\label{l:n-cl}If the\/ $A_i$, $i\in I$, are nearly closed, then so are\/ $A^+$ and\/~$A^-$.\end{lemma}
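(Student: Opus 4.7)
The plan is to construct, for each $i\in I^+$, a closed approximation $\breve A_i$ of $A_i$ that is contained in $\overline{A_i}$; this will make the family $(\breve A_i)_{i\in I^+}$ locally finite, so that $\breve A^+:=\bigcup_{i\in I^+}\breve A_i$ is automatically closed and serves as a witness for the near-closedness of $A^+$.

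First, I would apply Definition~\ref{def:nearly} to pick, for each $i\in I^+$, a closed $\breve A_i^{(0)}$ with $c_\kappa(A_i\bigtriangleup\breve A_i^{(0)})=0$, and then \emph{replace} it by $\breve A_i:=\breve A_i^{(0)}\cap\overline{A_i}$. This $\breve A_i$ remains closed, and since $A_i\subset\overline{A_i}$ one has $A_i\setminus\breve A_i=A_i\setminus\breve A_i^{(0)}$ and $\breve A_i\setminus A_i\subset\breve A_i^{(0)}\setminus A_i$, so that $A_i\bigtriangleup\breve A_i\subset A_i\bigtriangleup\breve A_i^{(0)}$ and hence still $c_\kappa(A_i\bigtriangleup\breve A_i)=0$.

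Second, since $X$ is locally compact Hausdorff, the local finiteness of $(A_i)_{i\in I}$ in the sense of Definition~\ref{def:cond} is equivalent to the assertion that every $x\in X$ admits an open neighborhood meeting only finitely many $A_i$. For any such open $U$, the condition $U\cap\overline{A_j}\ne\varnothing$ forces $U\cap A_j\ne\varnothing$; hence $(\overline{A_i})_{i\in I}$ is locally finite, and therefore so is its subfamily $(\breve A_i)_{i\in I^+}$. A locally finite union of closed sets being closed, the set $\breve A^+:=\bigcup_{i\in I^+}\breve A_i$ is closed in $X$.

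Finally, a direct set-theoretic computation yields
\[A^+\bigtriangleup\breve A^+\;\subset\;\bigcup_{i\in I^+}\bigl(A_i\bigtriangleup\breve A_i\bigr),\]
a countable union of Borel sets of inner capacity zero. Invoking the countable subadditivity of $c_\kappa$ on Borel sets (a standard consequence of Fuglede's framework \cite{F1}; alternatively, one may apply Lemma~\ref{cap0} to the Borel restrictions $\nu|_{A_i\bigtriangleup\breve A_i}$ of any $\nu\in\mathcal E_\kappa^+(A^+\bigtriangleup\breve A^+)$) then gives $c_\kappa(A^+\bigtriangleup\breve A^+)=0$, so $A^+$ is nearly closed with witness $\breve A^+$. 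The same argument, verbatim with $I^+$ replaced by $I^-$, handles $A^-$. I expect the only delicate point to be arranging the closed witnesses in a locally finite family; once the replacement $\breve A_i^{(0)}\mapsto\breve A_i^{(0)}\cap\overline{A_i}$ is made, the remainder is routine.
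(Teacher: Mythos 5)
Your proof is correct and follows the same overall route as the paper's: take closed witnesses $\breve{A}_i$ from Definition~\ref{def:nearly}, form the locally finite union $\breve{A}^\pm$, observe that a locally finite union of closed sets is closed, and conclude by the countable subadditivity of inner capacity on Borel sets (the paper cites \cite[Lemma~2.3.5]{F1} for this last step, exactly as you do). The one place where you genuinely add something is the replacement $\breve{A}_i^{(0)}\mapsto\breve{A}_i^{(0)}\cap\overline{A_i}$: the paper simply asserts that the collection $(\breve{A}_i)_{i\in I^\pm}$ is locally finite, which is not automatic for an arbitrary choice of closed witnesses, since Definition~\ref{def:nearly} puts no locational restriction on $\breve{Q}\setminus Q$. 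Your normalization forces $\breve{A}_i\subset\overline{A_i}$ without enlarging the symmetric difference, and your observation that an open set meeting $\overline{A_j}$ must meet $A_j$ (together with the equivalence, in a locally compact Hausdorff space, of the compact-set and neighborhood formulations of local finiteness in Definition~\ref{def:cond}) then yields the local finiteness the paper takes for granted. So your argument is the paper's argument with a small but genuine gap filled; the parenthetical alternative to countable subadditivity via Lemma~\ref{cap0} is dispensable and slightly delicate (one would need that restrictions of a finite-energy positive measure again have finite energy), but since you rely on the subadditivity lemma as the main route, this does not affect correctness.
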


\begin{proof} With $\breve{A}_i$, $i\in I$, determined by Def\-in\-it\-ion\/~\ref{def:nearly} for $Q=A_i$, write
\begin{equation}\label{brevea}\breve{A}^+:=\bigcup_{i\in I^+}\,\breve{A}_i\text{ \ and \ }
\breve{A}^-:=\bigcup_{j\in I^-}\,\breve{A}_j.\end{equation}
Then
$\breve{A}^\pm$ is closed, for the collection
$(\breve{A}_i)_{i\in I^\pm}$ of (closed) sets $\breve{A}_i$ is locally
finite. Since $c_\kappa(A_i\bigtriangleup\breve{A}_i)=0$ for all $i\in I$, the countable subadditivity of inner capacity on Borel sets
\cite[Lemma~2.3.5]{F1} yields $c_\kappa(A^\pm\bigtriangleup\breve{A}^\pm)=0$.\end{proof}

\begin{definition}A generalized condenser is \emph{standard\/}
if its plates are closed.\end{definition}

Unless explicitly stated otherwise, in all that follows $\mathbf A=(A_i)_{i\in I}$ is a
generalized condenser in $X$. Let $\mathfrak M^+(\mathbf
A)$ consist of all $\boldsymbol\mu=(\mu^i)_{i\in I}\in\mathfrak
M^+(X)^{\mathrm{Card}\,I}$ with $\mu^i\in\mathfrak M^+(A_i)$
for all $i\in I$. In other words, $\mathfrak M^+(\mathbf A)$ stands
for the Cartesian product $\prod_{i\in I}\,\mathfrak M^+(A_i)$,
equipped with the vague topology induced from $\mathfrak
M^+(X)^{\mathrm{Card}\,I}$. Elements of $\mathfrak M^+(\mathbf
A)$ are said to be (vector) measures \emph{associated with\/}~$\mathbf A$.

\begin{lemma}\label{l:v:cl}If a condenser\/ $\mathbf A$ is standard, then\/ $\mathfrak M^+(\mathbf A)$
is vaguely closed in\/ $\mathfrak
M^+(X)^{\mathrm{Card}\,I}$.\end{lemma}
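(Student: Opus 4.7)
The plan is to reduce the assertion to a statement about each factor $\mathfrak M^+(A_i)$ being vaguely closed in $\mathfrak M^+(X)$, and then invoke the general-topology fact that a product of closed sets is closed in the product topology.

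First, by the very definition recalled in Section~\ref{sec:def}, $\mathfrak M^+(\mathbf A)=\prod_{i\in I}\mathfrak M^+(A_i)$ as a subset of $\mathfrak M^+(X)^{\mathrm{Card}\,I}$, equipped with the vague product topology. Since a Cartesian product of closed subsets of factor spaces is closed in the product topology (Bourbaki, Chapter~I, Section~4), it will suffice to show that for each $i\in I$, $\mathfrak M^+(A_i)$ is vaguely closed in $\mathfrak M^+(X)$.

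Second, fix $i\in I$. Because $\mathbf A$ is standard, $A_i$ is closed in $X$, so $U:=X\setminus A_i$ is open. By the characterization stated at the end of Section~\ref{sec:prel1}, a measure $\nu\in\mathfrak M^+(X)$ is carried by the closed set $A_i$ if and only if $S(\nu)\subset A_i$, equivalently $\nu(U)=0$. By the Radon regularity formula for the open set $U$, this is in turn equivalent to $\langle\varphi,\nu\rangle=0$ for every $\varphi\in C_0(X)$ with $0\leqslant\varphi$ and $S(\varphi)\subset U$. For each such test function $\varphi$, the map $\nu\mapsto\langle\varphi,\nu\rangle$ is vaguely continuous on $\mathfrak M^+(X)$, so the level set $\{\nu:\langle\varphi,\nu\rangle=0\}$ is vaguely closed. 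Intersecting over all admissible $\varphi$ exhibits $\mathfrak M^+(A_i)$ as a vague intersection of closed sets, hence vaguely closed.

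There is no real obstacle here: the statement is essentially a product-topology exercise, and the only point that warrants explicit mention is the translation from being "carried by" a closed set to being "supported in" it, which has already been made available in the preliminaries. Note that nowhere in this argument do properties of the kernel $\kappa$, the positive-definiteness, or the disjointness condition \eqref{non} on oppositely signed plates come into play — only the closedness of the individual plates is needed.
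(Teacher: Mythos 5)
Your proof is correct and follows essentially the same route as the paper: the paper likewise notes that each $\mathfrak M^+(A_i)$ is vaguely closed in $\mathfrak M^+(X)$ because $A_i$ is closed, and then invokes the Bourbaki corollary that a product of closed sets is closed in the product topology. You merely spell out the (standard) test-function argument for the closedness of each factor, which the paper takes for granted.
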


\begin{proof} Noting that the $\mathfrak M^+(A_i)$, $i\in I$, are vaguely closed in $\mathfrak M^+(X)$ ($A_i$ being closed in $X$), we obtain the lemma from \cite[Chapter~I, Section~4, Corollary to Proposition~7]{B1}.\end{proof}

\subsection{Mapping $R:\mathfrak M^+(\mathbf A)\to\mathfrak M(X)$}\label{sect:R}

Since each compact subset of $X$ has points in common with at most
finitely many $A_i$, for every $\varphi\in C_0(X)$ only a finite
number of $\mu^i(\varphi)$, $\boldsymbol\mu=(\mu^i)_{i\in
I}\in\mathfrak M^+(\mathbf A)$ being given, are nonzero. This
implies that there corresponds to every positive vector measure
$\boldsymbol\mu\in\mathfrak M^+(\mathbf A)$ a unique (signed) scalar
Radon measure $R\boldsymbol\mu=R(\boldsymbol\mu)\in\mathfrak M(X)$
such that
\[R\boldsymbol\mu(\varphi)=\sum_{i\in I}\,s_i\mu^i(\varphi)\text{ \ for
all\ }\varphi\in C_0(X),\] $s_i$ being determined by
(\ref{sign}). Since the positive scalar measures $\sum_{i\in
I^+}\mu^i$ and $\sum_{i\in I^-}\mu^i$ are carried by the
nonintersecting Borel sets $A^+$ and $A^-$, respectively, these two
measures are, in fact, the positive and negative parts in the
Hahn--Jor\-dan decomposition of $R\boldsymbol\mu$; i.e.,
$R\boldsymbol\mu=(R\boldsymbol\mu)^+-(R\boldsymbol\mu)^-$, where
\[(R\boldsymbol\mu)^+:=\sum_{i\in I^+}\,\mu^i\text{ \ and \ }
(R\boldsymbol\mu)^-:=\sum_{i\in I^-}\,\mu^i.\] When the dependence
of the mapping $R$ on the (generalized) condenser $\mathbf A$ needs
to be indicated explicitly, we shall write $R_{\mathbf A}$ in place
of~$R$.

The mapping $\mathfrak M^+(\mathbf A)\to\mathfrak M(X)$ thus defined
is in general non-injective; i.e., there exist
$\boldsymbol\mu_1,\boldsymbol\mu_2\in\mathfrak M^+(\mathbf A)$ such
that $\boldsymbol\mu_1\ne\boldsymbol\mu_2$, but
$R\boldsymbol\mu_1=R\boldsymbol\mu_2$. We say that
$\boldsymbol\mu_1,\boldsymbol\mu_2\in\mathfrak M^+(\mathbf A)$ are
\emph{$R$-equ\-iv\-alent\/} if
$R\boldsymbol\mu_1=R\boldsymbol\mu_2$. The relation of
$R$-equ\-iv\-alence on $\mathfrak M^+(\mathbf A)$ implies that of
identity (and hence these two relations on $\mathfrak M^+(\mathbf
A)$ are equivalent) if and only if $A_i\cap A_j=\varnothing$ for all $i\ne j$ (compare
with Lemma~\ref{inj} below).

\begin{lemma}\label{lem:vague'}If a net\/
$(\boldsymbol\mu_s)_{s\in S}\subset\mathfrak M^+(\mathbf A)$
converges vaguely to\/ $\boldsymbol\mu_0\in\mathfrak M^+(\mathbf
A)$, then\/ $R\boldsymbol\mu_s\to R\boldsymbol\mu_0$ vaguely in\/
$\mathfrak M(X)$ as\/ $s$ increases along\/~$S$.\end{lemma}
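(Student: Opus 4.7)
The plan is to verify the defining property of vague convergence, namely that $R\boldsymbol\mu_s(\varphi) \to R\boldsymbol\mu_0(\varphi)$ for every test function $\varphi \in C_0(X)$. The key idea is that, although the sum $R\boldsymbol\mu(\varphi) = \sum_{i\in I} s_i\mu^i(\varphi)$ may formally run over a countably infinite index set, testing against a fixed $\varphi$ collapses it to a finite sum thanks to the local finiteness of $\mathbf A$, after which componentwise vague convergence finishes the job.

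First I would fix an arbitrary $\varphi \in C_0(X)$ and set $K := \operatorname{supp}\varphi$, which is compact. Since $(A_i)_{i\in I}$ is locally finite by Definition~\ref{def:cond}, the index set
\[
I_K := \{i \in I : A_i \cap K \neq \varnothing\}
\]
is finite. Next I would observe that for every $i \in I \setminus I_K$ and every $\nu \in \mathfrak M^+(A_i)$ one has $\nu(\varphi) = 0$: indeed, $K \subset A_i^c$, and $A_i^c$ is locally $\nu$-negligible (since $\nu$ is carried by $A_i$), so the compact (hence $\nu$-integrable) subset $K$ of $A_i^c$ satisfies $\nu(K) = 0$, whence $|\nu(\varphi)| \leqslant \sup|\varphi| \cdot \nu(K) = 0$. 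Consequently, both $R\boldsymbol\mu_s(\varphi)$ and $R\boldsymbol\mu_0(\varphi)$ reduce to the finite sums
\[
R\boldsymbol\mu_s(\varphi) = \sum_{i \in I_K} s_i\,\mu_s^i(\varphi), \qquad R\boldsymbol\mu_0(\varphi) = \sum_{i \in I_K} s_i\,\mu_0^i(\varphi).
\]

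Finally, because the vague topology on $\mathfrak M^+(\mathbf A)$ is by definition the trace of the product topology on $\mathfrak M^+(X)^{\mathrm{Card}\,I}$, each coordinate projection $\boldsymbol\mu \mapsto \mu^i$ is continuous into $(\mathfrak M^+(X),\text{vague})$. Hence $\mu_s^i \to \mu_0^i$ vaguely for every $i \in I$, which in particular gives $\mu_s^i(\varphi) \to \mu_0^i(\varphi)$ for each of the finitely many $i \in I_K$. Passing to the limit term by term in the finite sum yields $R\boldsymbol\mu_s(\varphi) \to R\boldsymbol\mu_0(\varphi)$, as required.

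There is no real obstacle here; the only point that deserves a sentence of care is the justification that $\mu^i(\varphi) = 0$ when $A_i \cap \operatorname{supp}\varphi = \varnothing$, which relies on the (slightly subtle) equivalence between being carried by $A_i$ and $A_i^c$ being locally negligible, together with the fact that compact sets are $\mu^i$-integrable and therefore have $\mu^i$-measure zero inside any locally negligible set.
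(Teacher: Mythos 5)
Your proof is correct and follows exactly the paper's (one-line) argument: the compact support of $\varphi$ meets only finitely many $A_i$, so $R\boldsymbol\mu(\varphi)$ reduces to a finite sum of terms $s_i\mu^i(\varphi)$, each of which converges by componentwise vague convergence. You have merely spelled out the details (in particular why $\mu^i(\varphi)=0$ when $A_i\cap\operatorname{supp}\varphi=\varnothing$) that the paper leaves implicit.
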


\begin{proof} This follows directly from the observation that the (compact)
support of any $\varphi\in C_0(X)$ can intersect only finitely
many~$A_i$.\end{proof}

\begin{remark}\label{remT} Lemma~\ref{lem:vague'} cannot in general be
reversed. However, if the $A_i$, $i\in I$, are closed and
mutually disjoint, then for any $(\boldsymbol\mu_s)_{s\in S}$ and $\boldsymbol\mu_0$ in\/ $\mathfrak M^+(\mathbf A)$, the vague convergence of
$(R\boldsymbol\mu_s)_{s\in S}$ to $R\boldsymbol\mu_0$ implies the
vague convergence of $(\boldsymbol\mu_s)_{s\in S}$ to
$\boldsymbol\mu_0$. This is seen by the Tie\-tze--Ury\-sohn
extension theorem \cite[Theorem~0.2.13]{E}.\end{remark}

\subsection{Energies and potentials of vector measures and those of their scalar
$R$-im\-ag\-es}\label{sec:en}
For a (positive definite) kernel $\kappa$ and vector measures $\boldsymbol\mu,\boldsymbol\nu\in\mathfrak M^+(\mathbf A)$,
define the \emph{mutual energy\/}\footnote{With regard to (\ref{vectoren}) and (\ref{vectorpot}), see footnote~\ref{foot-abs}.}
\begin{equation}\label{vectoren}\kappa(\boldsymbol\mu,\boldsymbol\nu):=\sum_{i,j\in
I}\,s_is_j\kappa(\mu^i,\nu^j)\end{equation} and the \emph{vector
potential\/} $\kappa_{\boldsymbol\mu}(\cdot)$ as a
vector-valued function on $X$ with the components
\begin{equation}\label{vectorpot}\kappa^i_{\boldsymbol\mu}(\cdot):=\sum_{j\in
I}\,s_is_j\kappa(\cdot,\mu^j), \ i\in I,\end{equation}
where $\kappa(\cdot,\nu):=\int\kappa(\cdot,y)\,d\nu(y)$ denotes the \emph{potential\/} of $\nu\in\mathfrak M(X)$. For
$\boldsymbol\mu=\boldsymbol\nu$, $\kappa(\boldsymbol\mu,\boldsymbol\nu)$ becomes the
\emph{energy\/} $\kappa(\boldsymbol\mu,\boldsymbol\mu)$ of $\boldsymbol\mu$, cf.~(\ref{intr1}).

Let $\mathcal E_\kappa^+(\mathbf A)$ consist of all $\boldsymbol\mu\in\mathfrak M^+(\mathbf A)$
with finite $\kappa(\boldsymbol\mu,\boldsymbol\mu)$, which means that $\kappa$ is
$(\mu^i\otimes\mu^j)$-int\-egr\-able for all $i,j\in I$ and the series
$\sum_{i,j\in I}\,|\kappa(\mu^i,\mu^j)|$ is convergent (the latter
can be omitted if $X$ is compact, for then $I$ is
finite).

\begin{lemma}\label{enfinite} For\/ $\boldsymbol\mu\in\mathfrak M^+(\mathbf A)$ to
have finite energy, it is sufficient that\/ $\mu^i\in\mathcal E^+_\kappa(A_i)$ for all\/ $i\in I$ and, moreover, $\sum_{i\in I}\,\|\mu^i\|_\kappa<\infty$.\end{lemma}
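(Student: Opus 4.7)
The plan is to reduce the finiteness of $\kappa(\boldsymbol\mu,\boldsymbol\mu)$ to an application of the Cauchy--Schwarz inequality. By the definition recalled in footnote~\ref{foot-abs}, we must verify two things: first, that $\kappa$ is $(\mu^i\otimes\mu^j)$-integrable for every pair $i,j\in I$, so that each mutual energy $\kappa(\mu^i,\mu^j)$ is a finite real number; and second, that the double series $\sum_{i,j\in I} s_is_j\kappa(\mu^i,\mu^j)$ in (\ref{vectoren}) converges absolutely.

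For the first point, the hypothesis $\mu^i\in\mathcal E^+_\kappa(A_i)\subset\mathcal E_\kappa(X)$ for each $i\in I$ means $\|\mu^i\|_\kappa<\infty$, so the Cauchy--Schwarz (Bunyakovski) inequality in the pre-Hilbert space $\mathcal E_\kappa(X)$, quoted in Section~\ref{sec:cons}, yields
\[
|\kappa(\mu^i,\mu^j)|\leqslant\|\mu^i\|_\kappa\cdot\|\mu^j\|_\kappa<\infty
\quad\text{for all }i,j\in I.
\]
In particular, each $\kappa(\mu^i,\mu^j)$ is well defined and finite, so $\kappa$ is $(\mu^i\otimes\mu^j)$-integrable.

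For the second point, summing the preceding bound over $i,j\in I$ and invoking Tonelli's theorem for non-negative double series gives
\[
\sum_{i,j\in I}\,|s_is_j\kappa(\mu^i,\mu^j)|
\leqslant\sum_{i,j\in I}\,\|\mu^i\|_\kappa\|\mu^j\|_\kappa
=\Bigl(\sum_{i\in I}\,\|\mu^i\|_\kappa\Bigr)^2<\infty,
\]
the last inequality being the hypothesis. Thus the series in (\ref{vectoren}) defining $\kappa(\boldsymbol\mu,\boldsymbol\mu)$ converges absolutely, so $\boldsymbol\mu\in\mathcal E^+_\kappa(\mathbf A)$ as claimed.

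There is no real obstacle here: the entire argument is Cauchy--Schwarz plus summing a product of two convergent series. The only point worth flagging is notational, namely matching the conditions for absolute convergence of the double series in footnote~\ref{foot-abs} with the two bullet conditions extracted above; once this correspondence is made explicit, the proof is complete.
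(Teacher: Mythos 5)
Your proof is correct and follows exactly the paper's argument: the Cauchy--Schwarz inequality in $\mathcal E_\kappa(X)$ gives $|\kappa(\mu^i,\mu^j)|\leqslant\|\mu^i\|_\kappa\|\mu^j\|_\kappa$, and summing yields $\bigl(\sum_{i\in I}\|\mu^i\|_\kappa\bigr)^2<\infty$. The extra care you take in checking the $(\mu^i\otimes\mu^j)$-integrability condition from footnote~\ref{foot-abs} is a harmless elaboration of what the paper leaves implicit.
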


\begin{proof} In fact, applying the
Cauchy--Schwarz inequality in $\mathcal E_\kappa(X)$, we get
\[\sum_{i,j\in
I}\,|\kappa(\mu^i,\mu^j)|\leqslant\sum_{i,j\in
I}\,\|\mu^i\|_\kappa\|\mu^j\|_\kappa=\Bigl(\sum_{i\in
I}\,\|\mu^i\|_\kappa\Bigr)^2.\vspace{-.2in}\]\end{proof}

The following lemma is crucial for the establishment of relations between energies and potentials of vector
measures $\boldsymbol\mu\in\mathfrak M^+(\mathbf A)$ and those of
their (signed scalar) $R$-images $R\boldsymbol\mu\in\mathfrak M(X)$.

\begin{lemma}\label{integral} Given a generalized\/ $(L^+,L^-)$-condenser\/
$\mathbf B=(B_\ell)_{\ell\in L}$
in a l.c.s.\/ $Y$, consider\/ $\boldsymbol\omega=(\omega^\ell)_{\ell\in
L}\in\mathfrak M^+(\mathbf B)$ and\/ $\psi\in\Psi(Y)$. For\/
$\psi$ to be\/ $|R_{\,\mathbf B}\boldsymbol\omega|$-integrable, it
is necessary and sufficient that\/ $\sum_{\ell\in
L}\,|\langle\psi,\omega^\ell\rangle|<\infty$; and in the affirmative
case,
\begin{equation*}
\langle\psi,R_{\,\mathbf B}\boldsymbol\omega\rangle=\sum_{\ell\in
L}\,s_\ell\langle\psi,\omega^\ell\rangle.\label{lemma11}
\end{equation*}
\end{lemma}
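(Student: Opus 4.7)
The plan is to reduce everything to the single identity
\[
\langle\psi,|R_{\mathbf B}\boldsymbol\omega|\rangle=\sum_{\ell\in L}\,\langle\psi,\omega^\ell\rangle,
\]
which I will establish for nonnegative $\psi\in\Psi(Y)$. Once this identity is in hand, the lemma follows by bookkeeping: each $\langle\psi,\omega^\ell\rangle\geqslant0$, so absolute summability of $\sum_\ell\langle\psi,\omega^\ell\rangle$ is equivalent to finiteness of the right-hand side, which by the identity is equivalent to $|R_{\mathbf B}\boldsymbol\omega|$-integrability of $\psi$; linearity applied to the Hahn--Jordan decomposition $R_{\mathbf B}\boldsymbol\omega=(R\boldsymbol\omega)^+-(R\boldsymbol\omega)^-$ then delivers the required formula $\langle\psi,R_{\mathbf B}\boldsymbol\omega\rangle=\sum_\ell s_\ell\langle\psi,\omega^\ell\rangle$.

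The key structural observation is that $(R\boldsymbol\omega)^{\pm}=\sum_{\ell\in L^{\pm}}\omega^\ell$ are carried by the disjoint Borel sets $B^+,B^-$, so that $|R_{\mathbf B}\boldsymbol\omega|=\sum_{\ell\in L}\omega^\ell$ as a positive Radon measure (the sum being locally finite). If $Y$ is compact, local finiteness forces $L$ to be finite and the identity is immediate (so allowing signed $\psi$ in the compact case is harmless). Assume therefore $\psi\geqslant0$.

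I would then establish the identity by matching two bounds, starting from the standard formula
\[
\langle\psi,\nu\rangle=\sup\bigl\{\langle\varphi,\nu\rangle:\varphi\in C_0^+(Y),\ \varphi\leqslant\psi\bigr\}\quad(\nu\in\mathfrak M^+(Y),\ \psi\text{ nonneg.\ l.s.c.}).
\]
For the upper bound, any such $\varphi$ has compact support meeting only finitely many $B_\ell$, so $\langle\varphi,|R_{\mathbf B}\boldsymbol\omega|\rangle=\sum_\ell\langle\varphi,\omega^\ell\rangle$ is a finite sum dominated by $\sum_\ell\langle\psi,\omega^\ell\rangle$; taking $\sup_\varphi$ yields $\langle\psi,|R_{\mathbf B}\boldsymbol\omega|\rangle\leqslant\sum_\ell\langle\psi,\omega^\ell\rangle$. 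For the reverse bound, fix a finite $F\subset L$ and $\varphi_\ell\in C_0^+(Y)$ with $\varphi_\ell\leqslant\psi$ for $\ell\in F$; setting $\varphi:=\max_{\ell\in F}\varphi_\ell\in C_0^+(Y)$, which still satisfies $\varphi\leqslant\psi$, one obtains
\[
\sum_{\ell\in F}\,\langle\varphi_\ell,\omega^\ell\rangle\leqslant\sum_{\ell\in F}\,\langle\varphi,\omega^\ell\rangle\leqslant\langle\varphi,|R_{\mathbf B}\boldsymbol\omega|\rangle\leqslant\langle\psi,|R_{\mathbf B}\boldsymbol\omega|\rangle.
\]
Taking the suprema over the $\varphi_\ell$ independently gives $\sum_{\ell\in F}\langle\psi,\omega^\ell\rangle\leqslant\langle\psi,|R_{\mathbf B}\boldsymbol\omega|\rangle$, and letting $F\uparrow L$ closes the matching inequality.

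The genuinely delicate step is this directed-supremum argument, which pushes the trivial $C_0^+$ case of the identity up to the full l.s.c.\ case by exploiting upward directedness of $\{\varphi\in C_0^+(Y):\varphi\leqslant\psi\}$ under $\max$. Everything else is linearity plus the disjointness $B^+\cap B^-=\varnothing$ forced by the condenser condition~(\ref{non}).
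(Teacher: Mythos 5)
Your proof is correct, and it shares the paper's overall skeleton: reduce to nonnegative $\psi$ by noting that a compact $Y$ forces $L$ to be finite, establish $\langle\psi,(R_{\,\mathbf B}\boldsymbol\omega)^{\pm}\rangle=\sum_{\ell\in L^{\pm}}\langle\psi,\omega^\ell\rangle$ by a two-sided estimate, and conclude by subtraction. The mechanism for the two bounds differs, though. The paper obtains the nontrivial inequality $\langle\psi,(R_{\,\mathbf B}\boldsymbol\omega)^{+}\rangle\leqslant\sum_{\ell\in L^{+}}\langle\psi,\omega^\ell\rangle$ by observing that, thanks to local finiteness of $\mathbf B$, the partial sums $\sum_{\ell\in L^{+},\ \ell\leqslant N}\omega^\ell$ converge vaguely to $(R_{\,\mathbf B}\boldsymbol\omega)^{+}$, and then invoking the vague lower semicontinuity of $\nu\mapsto\langle\psi,\nu\rangle$ (Lemma~\ref{lemma:lower}); the reverse inequality is immediate monotonicity of the upper integral. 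You instead unwind the definition of $\langle\psi,\nu\rangle$ for nonnegative l.s.c.\ $\psi$ as a supremum over $C_0^{+}$ minorants: your finite-support observation yields the upper bound, and your directed-max argument the lower bound. In effect you re-prove, in the special case needed, the content of Lemma~\ref{lemma:lower}, which makes your argument more self-contained but a bit longer; your lower bound could also be had in one line from $\sum_{\ell\in F}\omega^\ell\leqslant|R_{\,\mathbf B}\boldsymbol\omega|$ together with additivity and monotonicity of the upper integral in the measure variable. Both routes are valid, and your final bookkeeping (Hahn--Jordan decomposition into parts carried by the disjoint Borel sets $B^{+}$ and $B^{-}$) matches the paper's.
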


\begin{proof} We can certainly assume $L$ to be infinite, for otherwise the lemma is
obvious. Then $Y$ is
noncompact, and hence $\psi$ is nonnegative. Therefore
\[\langle\psi,(R_{\,\mathbf B}\boldsymbol\omega)^+\rangle\geqslant
\sum_{\ell\in L^+,\ \ell\leqslant N}\,\langle
\psi,\omega^\ell\rangle\text{ \ for all\ }N\in L^+.\] On the other
hand, since $\mathbf B$ is locally finite, the sum of $\omega^\ell$ over all $\ell\in L^+$ that do not
exceed $N$ approaches $(R_{\,\mathbf B}\boldsymbol\omega)^+$ vaguely
as $N$ increases along $L^+$. Hence, by Lemma~\ref{lemma:lower},\footnote{The symbol $\lim_{s\in S}$ denotes a limit as $s$
increases along an upper directed set~$S$.}
\begin{equation*}
\bigl\langle\psi,(R_{\,\mathbf
B}\boldsymbol\omega)^+\bigr\rangle\leqslant\lim_{N\in L^+}\,\sum_{\ell\in
L^+,\ \ell\leqslant
N}\,\langle\psi,\omega^\ell\rangle.\end{equation*} Combining these
two displays and then letting $N$ along $L^+$, we get
\[\bigl\langle\psi,(R_{\,\mathbf
B}\boldsymbol\omega)^+\bigr\rangle=\sum_{\ell\in
L^+}\,\langle\psi,\omega^\ell\rangle.\] Since the same holds for $(R_{\,\mathbf B}\boldsymbol\omega)^-$ and $L^-$, the lemma
follows by subtraction.\end{proof}

\begin{corollary}\label{pot.ener} Fix\/ $\boldsymbol\mu,\boldsymbol\nu\in\mathfrak
M^+(\mathbf A)$ and\/ $x\in X$. Then
\begin{align}
\kappa(R\boldsymbol\mu,R\boldsymbol\nu)&=\sum_{i,j\in
I}\,s_is_j\kappa(\mu^i,\nu^j),\label{mutual}\\
\kappa(x,R\boldsymbol\mu)&=\sum_{i\in
I}\,s_i\kappa(x,\mu^i),\label{poten}
\end{align}
each of the identities being understood in the sense that either of
its sides is finite whenever so is the other and then they coincide.
By\/ {\rm(\ref{vectoren})} and\/ {\rm(\ref{mutual})} with\/
$\boldsymbol\mu=\boldsymbol\nu$,
\begin{equation}\label{iff}\boldsymbol\mu\in\mathcal E_\kappa^+(\mathbf A)\iff
R\boldsymbol\mu\in\mathcal E_\kappa(X).\end{equation}
\end{corollary}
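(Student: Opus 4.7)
The plan is to derive both identities from the single Lemma~\ref{integral}: once applied at the base space $X$ to obtain~(\ref{poten}), and once applied at the product space $X\times X$, via a product condenser, to obtain~(\ref{mutual}).

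For the potential identity~(\ref{poten}), I would fix $x\in X$ and set $\psi:=\kappa(x,\cdot)$. Since $\kappa\in\Psi(X\times X)$, the partial function $\psi$ lies in $\Psi(X)$: lower semicontinuity transfers to sections, and if $X$ is noncompact then so is $X\times X$, so the nonnegativity required of $\psi$ is inherited from $\kappa$. Lemma~\ref{integral} with $Y=X$, $\mathbf B=\mathbf A$, $\boldsymbol\omega=\boldsymbol\mu$, and this $\psi$ then immediately yields~(\ref{poten}) together with the stipulated equivalence of finiteness, since $|\langle\psi,\mu^i\rangle|=\kappa(x,\mu^i)$.

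For the mutual energy identity~(\ref{mutual}), the key step is to introduce the \emph{product condenser} $\mathbf C:=(A_i\times A_j)_{(i,j)\in I\times I}$ in the l.c.s.\ $X\times X$, with signs $s_is_j\in\{\pm1\}$. I would first verify that $\mathbf C$ meets Definition~\ref{def:cond}: the plates are Borel and nonempty, local finiteness transfers from $\mathbf A$ through compact boxes $K_1\times K_2\subset X\times X$, and the disjointness requirement holds because $s_is_js_{i'}s_{j'}=-1$ forces at least one of $s_is_{i'}$, $s_js_{j'}$ to be $-1$, whence $A_i\cap A_{i'}=\varnothing$ or $A_j\cap A_{j'}=\varnothing$, so $(A_i\times A_j)\cap(A_{i'}\times A_{j'})=\varnothing$. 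Next, $\boldsymbol\mu\otimes\boldsymbol\nu:=(\mu^i\otimes\nu^j)_{(i,j)\in I\times I}$ belongs to $\mathfrak M^+(\mathbf C)$, and I would establish the identification
\[R_{\mathbf C}(\boldsymbol\mu\otimes\boldsymbol\nu)=R\boldsymbol\mu\otimes R\boldsymbol\nu\]
as signed Radon measures on $X\times X$; testing any $\varphi\in C_0(X\times X)$ involves only finitely many indices because the support of $\varphi$ is compact and $\mathbf A$ is locally finite, so both sides reduce to the same finite bilinear combination of the $(\mu^i\otimes\nu^j)(\varphi)$. With this identification in hand, Lemma~\ref{integral} applied with $Y=X\times X$, $\mathbf B=\mathbf C$, $\boldsymbol\omega=\boldsymbol\mu\otimes\boldsymbol\nu$, and $\psi=\kappa\in\Psi(X\times X)$ directly delivers~(\ref{mutual}) and its finiteness clause.

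Finally, the equivalence~(\ref{iff}) falls out at once by specialising~(\ref{mutual}) to $\boldsymbol\nu=\boldsymbol\mu$: each side is finite iff $\sum_{i,j\in I}\kappa(\mu^i,\mu^j)<\infty$, which is precisely the definition of $\boldsymbol\mu\in\mathcal E_\kappa^+(\mathbf A)$ (cf.\ footnote~\ref{foot-abs}) on the one hand, and of $R\boldsymbol\mu\in\mathcal E_\kappa(X)$ on the other. The main technical point I anticipate is the clean verification that $R_{\mathbf C}(\boldsymbol\mu\otimes\boldsymbol\nu)$ coincides with the signed tensor product $R\boldsymbol\mu\otimes R\boldsymbol\nu$; everything else is an immediate invocation of Lemma~\ref{integral}.
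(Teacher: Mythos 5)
Your proposal is correct and follows essentially the same route as the paper: (\ref{poten}) via Lemma~\ref{integral} with $Y=X$ and $\psi=\kappa(x,\cdot)$, and (\ref{mutual}) via the product condenser $(A_i\times A_j)_{(i,j)\in I\times I}$ with signs $s_is_j$, the identification $R_{\mathbf A\times\mathbf A}(\boldsymbol\mu\otimes\boldsymbol\nu)=(R\boldsymbol\mu)\otimes(R\boldsymbol\nu)$, and Lemma~\ref{integral} applied to $\psi=\kappa$ on $X\times X$. The extra verifications you supply (that the product collection is indeed a generalized condenser, and the testing of $R_{\mathbf A\times\mathbf A}(\boldsymbol\mu\otimes\boldsymbol\nu)$ against $\varphi\in C_0(X\times X)$) are exactly the details the paper leaves implicit.
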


\begin{proof} Relation (\ref{poten}) follows directly from Lemma~\ref{integral}
with $Y=X$, $\mathbf B=\mathbf A$, and
$\psi(\cdot)=\kappa(x,\cdot)$. We next apply Lemma~\ref{integral} to the (generalized) condenser
$\mathbf A\times\mathbf A:=(A_i\times A_j)_{(i,j)\in I\times I}$ in
$X\times X$ with $s_{(i,j)}:=s_is_j$, the function
$\psi:=\kappa\in\Psi(X\times X)$, and the vector measure
$\boldsymbol\mu\otimes\boldsymbol\nu\in\mathfrak M^+(\mathbf
A\times\mathbf A)$, where
$\boldsymbol\mu\otimes\boldsymbol\nu:=(\mu^i\otimes\nu^j)_{(i,j)\in
I\times I}$. Noting that
\[R_{\mathbf A\times\mathbf A}(\boldsymbol\mu\otimes\boldsymbol\nu)=\sum_{i,j\in
I}\,s_is_j\mu^i\otimes\nu^j=(R_{\mathbf A}\boldsymbol\mu)\otimes
(R_{\mathbf A}\boldsymbol\nu),\]
we arrive at (\ref{mutual}).\end{proof}

\begin{corollary}\label{relation}Given\/ $\boldsymbol\mu,\boldsymbol\nu\in\mathcal
E^+_\kappa(\mathbf A)$, we have
\begin{equation}\label{vecen}
\kappa(\boldsymbol\mu,\boldsymbol\nu)=\kappa(R\boldsymbol\mu,R\boldsymbol\nu)=
\sum_{i,j\in I}\,s_is_j\kappa(\mu^i,\nu^j).
\end{equation}
Furthermore, for every\/ $i\in I$, $\kappa_{\boldsymbol\mu}^i(x)$ is
finite n.e.\ and can be written in the form
\begin{equation}\label{Rpot}
\kappa^i_{\boldsymbol\mu}(x)=s_i\kappa(x,R\boldsymbol\mu)=\sum_{j\in
I}\,s_is_j\kappa(x,\mu^j).
\end{equation}
The series in\/ {\rm(\ref{vecen})} as well as in\/ {\rm(\ref{Rpot})}
converges absolutely, the latter being valid~n.e.
\end{corollary}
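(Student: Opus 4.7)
The plan is to reduce both identities to the already-proved scalar statements of Corollary~\ref{pot.ener} via the observation that the positive scalar measure $|R\boldsymbol\mu|=\sum_{i\in I}\mu^i$ (and analogously $|R\boldsymbol\nu|$) belongs to $\mathcal E^+_\kappa(X)$, and then to apply the Cauchy--Schwarz inequality on the pre-Hilbert space $\mathcal E_\kappa(X)$ together with the classical fact that potentials of positive measures of finite energy are finite n.e.

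First I would note that we may assume $X$ to be non-compact (otherwise $I$ is finite and the assertion is immediate), so that $\kappa\geqslant0$ and every $\kappa(\mu^i,\nu^j)$ takes values in $[0,\infty]$. Since $\boldsymbol\mu\in\mathcal E_\kappa^+(\mathbf A)$, each $\kappa(\mu^i,\mu^j)$ is finite and $\sum_{i,j\in I}\kappa(\mu^i,\mu^j)<\infty$. Applying Fubini--Tonelli with the nonnegative integrand $\kappa$ to the countable sums of positive measures then yields
\[\kappa(|R\boldsymbol\mu|,|R\boldsymbol\mu|)=\sum_{i,j\in I}\kappa(\mu^i,\mu^j)<\infty,\]
so $|R\boldsymbol\mu|\in\mathcal E^+_\kappa(X)$; analogously $|R\boldsymbol\nu|\in\mathcal E^+_\kappa(X)$. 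The Cauchy--Schwarz inequality in $\mathcal E_\kappa(X)$ then gives
\[\sum_{i,j\in I}|s_is_j\kappa(\mu^i,\nu^j)|=\kappa(|R\boldsymbol\mu|,|R\boldsymbol\nu|)\leqslant\||R\boldsymbol\mu|\|_\kappa\,\||R\boldsymbol\nu|\|_\kappa<\infty,\]
which is the absolute convergence claimed for the series in~(\ref{vecen}). Since $R\boldsymbol\mu,R\boldsymbol\nu\in\mathcal E_\kappa(X)$ by~(\ref{iff}), the mutual energy $\kappa(R\boldsymbol\mu,R\boldsymbol\nu)$ is finite, and combining with identity~(\ref{mutual}) of Corollary~\ref{pot.ener} then yields $\kappa(\boldsymbol\mu,\boldsymbol\nu)=\kappa(R\boldsymbol\mu,R\boldsymbol\nu)=\sum_{i,j\in I}s_is_j\kappa(\mu^i,\nu^j)$, proving~(\ref{vecen}).

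For~(\ref{Rpot}) I would invoke the classical fact that the potential of any measure in $\mathcal E^+_\kappa(X)$ is finite n.e.\ (cf.~\cite[Section~2.3]{F1}). Applied to $|R\boldsymbol\mu|$, monotone convergence gives $\sum_{j\in I}\kappa(x,\mu^j)=\kappa(x,|R\boldsymbol\mu|)<\infty$ for n.e.\ $x\in X$, so the defining series $\sum_{j\in I}s_is_j\kappa(x,\mu^j)$ of $\kappa^i_{\boldsymbol\mu}(x)$ from~(\ref{vectorpot}) converges absolutely n.e. The identity $\kappa^i_{\boldsymbol\mu}(x)=s_i\kappa(x,R\boldsymbol\mu)$ then follows by pulling the constant factor $s_i$ out of this absolutely convergent series and invoking~(\ref{poten}) of Corollary~\ref{pot.ener}. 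The main obstacle is the initial reduction $|R\boldsymbol\mu|\in\mathcal E^+_\kappa(X)$: the hypothesis on $\boldsymbol\mu$ only prescribes absolute convergence of the double series $\sum_{i,j}|\kappa(\mu^i,\mu^j)|$, not \emph{a priori} finiteness of the energy of the sum measure, and it is the nonnegativity of $\kappa$ in the non-compact case that converts the former into the latter via Fubini--Tonelli.
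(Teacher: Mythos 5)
Your proposal is correct and follows essentially the same route as the paper: both reduce (\ref{vecen}) and (\ref{Rpot}) to the identities (\ref{mutual}) and (\ref{poten}) of Corollary~\ref{pot.ener}, obtain absolute convergence from the finiteness of the mutual energy of $R\boldsymbol\mu$ and $R\boldsymbol\nu$ in $\mathcal E_\kappa(X)$, and use the classical fact that the potential of a finite-energy measure is finite n.e. The only difference is cosmetic: you make the step explicit by checking $|R\boldsymbol\mu|\in\mathcal E^+_\kappa(X)$ via Tonelli and nonnegativity of $\kappa$ in the noncompact case, where the paper simply cites \cite[Lemma~3.1.1]{F1} for the finiteness (hence absolute convergence) of $\kappa(R\boldsymbol\mu,R\boldsymbol\nu)$.
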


\begin{proof} It is seen from (\ref{iff}) that
$R\boldsymbol\mu,R\boldsymbol\nu\in\mathcal E_\kappa(X)$; hence,
$\kappa(R\boldsymbol\mu,R\boldsymbol\nu)$ is finite (see, e.g.,
\cite[Lemma~3.1.1]{F1}), which yields (\ref{mutual}) with the
absolutely convergent series on the right-hand side. Compared with
(\ref{vectoren}), this implies (\ref{vecen}). Being the potential of
a (scalar) measure of finite energy relative to the positive definite kernel,
$\kappa(\cdot,R\boldsymbol\mu)$
is finite n.e.\ \cite[p.~164]{F1}. Hence, the series on the right-hand side in
(\ref{poten}) converges absolutely n.e., which together with
(\ref{vectorpot}) establishes~(\ref{Rpot}).\end{proof}

\begin{remark}\label{posen}Since the kernel is positive definite,
(\ref{vecen}) with $\boldsymbol\nu=\boldsymbol\mu$ yields
the \emph{positivity of the energy\/}
$\kappa(\boldsymbol\mu,\boldsymbol\mu)$, which a priori was not
obvious:
\begin{equation}\label{pos-en}\kappa(\boldsymbol\mu,\boldsymbol\mu)
\geqslant0\text{ \ for
all\ }\boldsymbol\mu\in\mathcal E_\kappa^+(\mathbf A).\end{equation}
\end{remark}

\begin{remark}\label{rem-convex}It is clear from the above that
$\mathcal E_\kappa^+(\mathbf A)$ is a
\emph{convex cone}. Indeed, since $\mathfrak M^+(\mathbf A)$ is
so, it is enough to observe that
$R(\beta_1\boldsymbol\mu_1+\beta_2\boldsymbol\mu_2)\in\mathcal
E_\kappa(X)$ for any $\beta_1,\beta_2\in(0,\infty)$ and
$\boldsymbol\mu_1,\boldsymbol\mu_2\in\mathcal E_\kappa^+(\mathbf
A)$. As $R\boldsymbol\mu_1,R\boldsymbol\mu_2\in\mathcal E_\kappa(X)$
by (\ref{iff}), while $R(\beta_1\boldsymbol\mu_1+\beta_2\boldsymbol\mu_2)=\beta_1R\boldsymbol\mu_1+
\beta_2R\boldsymbol\mu_2$,
the convexity of $\mathcal E_\kappa^+(\mathbf A)$ follows from the
linearity of $\mathcal E_\kappa(X)$.
\end{remark}

\subsection{Semimetric space of vector measures with finite
energy}\label{sec:semimetric} We next show that the cone
$\mathcal E_\kappa^+(\mathbf A)$ can be thought of as a semimetric space, isometric
to its (scalar) $R$-im\-age.

\begin{lemma}\label{inj} The relation of\/ $R$-equivalence on\/ $\mathcal
E^+_\kappa(\mathbf A)$ is equivalent to that of identity if and only
if the\/ $A_i$, $i\in I$, are mutually essentially disjoint, i.e.,
\begin{equation}\label{ess:dis}c_\kappa(A_i\cap A_j)=0\text{ \ for all\ }i\ne j.\end{equation}
\end{lemma}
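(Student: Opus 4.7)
The plan is to separate the ``if'' and ``only if'' directions and exploit the structure of the Hahn--Jordan decomposition of $R\boldsymbol\mu$ noted earlier: because the Borel sets $A^+$ and $A^-$ are disjoint, one has $(R\boldsymbol\mu)^+=\sum_{i\in I^+}\mu^i$ and $(R\boldsymbol\mu)^-=\sum_{i\in I^-}\mu^i$. Therefore $R\boldsymbol\mu_1=R\boldsymbol\mu_2$ if and only if, within each sign class separately, $\sum_{i\in I^\pm}\mu_1^i=\sum_{i\in I^\pm}\mu_2^i$. This reduces the whole question to the equally signed situation.

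For sufficiency, I would assume $c_\kappa(A_i\cap A_j)=0$ for all $i\ne j$ and $R\boldsymbol\mu_1=R\boldsymbol\mu_2$. Fix $k\in I$, set $s:=s_k$, and consider the signed measure $\lambda:=\mu_1^k-\mu_2^k\in\mathcal E_\kappa(X)$. Using the displayed equality of sums within sign class $s$, one rewrites $\lambda=\sum_{i\in I^s,\,i\ne k}(\mu_2^i-\mu_1^i)$. The plan is then to compare two upper bounds for the Hahn--Jordan positive part $\lambda^+$: on the one hand $\lambda^+\leqslant\mu_1^k$, so $\lambda^+$ is carried by $A_k$ and lies in $\mathcal E_\kappa^+(X)$; on the other hand $\lambda^+\leqslant\sum_{i\in I^s,\,i\ne k}\mu_2^i$, so it is carried by $\bigcup_{i\in I^s,\,i\ne k}A_i$. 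Hence $\lambda^+$ is carried by the Borel set $\bigcup_{i\ne k}(A_k\cap A_i)$, which has inner capacity zero by countable subadditivity on Borel sets (Lemma~2.3.5 of \cite{F1}, quoted in the proof of Lemma~\ref{l:n-cl}). Lemma~\ref{cap0} then forces $\lambda^+=0$, and the symmetric argument with $\lambda^-$ gives $\mu_1^k=\mu_2^k$.

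For necessity, I argue by contraposition. Suppose $c_\kappa(A_i\cap A_j)>0$ for some $i\ne j$. Condition (\ref{non}) rules out $s_i\ne s_j$, so $s_i=s_j$. By Lemma~\ref{cap0} there is a nonzero $\nu\in\mathcal E_\kappa^+(A_i\cap A_j)$; in particular $\nu\in\mathcal E_\kappa^+(A_i)\cap\mathcal E_\kappa^+(A_j)$. Define two vector measures in $\mathcal E_\kappa^+(\mathbf A)$ by placing $\nu$ in the $i$th slot (resp.\ the $j$th slot) and $0$ in every other slot. These are distinct, but both have $R$-image $s_i\nu=s_j\nu$, contradicting the assumed injectivity of $R$.

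The only real content is the sufficiency argument; the main point to handle carefully is the ``two upper bounds'' observation that $\lambda^+\in\mathcal E_\kappa^+(X)$ is carried by a Borel set of inner capacity zero, which is precisely where the finite-energy hypothesis on both $\boldsymbol\mu_1$ and $\boldsymbol\mu_2$ enters (so that $\lambda^+$ has finite energy and Lemma~\ref{cap0} is applicable).
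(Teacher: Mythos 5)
Your proof is correct and follows essentially the same route as the paper: the necessity direction is the same contraposition, producing via Lemma~\ref{cap0} a nonzero $\nu\in\mathcal E^+_\kappa(A_i\cap A_j)$ and two distinct $R$-equivalent vector measures (your witness, with $\nu$ placed in a single slot, is even a little simpler than the paper's, which perturbs a general $\boldsymbol\mu$ by $\pm\tau$). For sufficiency the paper merely says ``obvious by Lemma~\ref{cap0}''; your Hahn--Jordan/minimality argument showing that $(\mu_1^k-\mu_2^k)^\pm$ is a finite-energy measure carried by $\bigcup_{i\ne k}(A_k\cap A_i)$, a Borel set of inner capacity zero by countable subadditivity, is precisely the intended justification, correctly spelled out.
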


\begin{proof} The sufficiency part is obvious by Lemma~\ref{cap0}. Assume now on the contrary that there are $A_k$ and
$A_\ell$, $k\ne\ell$, with $c_\kappa(A_k\cap A_\ell)>0$; then necessarily $s_ks_\ell=+1$. It follows from
Lemma~\ref{cap0} that there exists a nonzero $\tau\in\mathcal E_\kappa^+(A_k\cap A_\ell)$. Choose
$\boldsymbol\mu=(\mu^i)_{i\in I}\in\mathcal E^+_\kappa(\mathbf A)$
such that $\mu^k|_{A_k\cap A_\ell}-\tau\geqslant0$, and define
$\boldsymbol\mu_m=(\mu_m^i)_{i\in I}\in\mathcal E^+_\kappa(\mathbf
A)$, $m=1,2$, where $\mu_1^k:=\mu^k-\tau$ and $\mu_1^i:=\mu^i$ for
all $i\ne k$, while $\mu_2^\ell:=\mu^\ell+\tau$ and $\mu_2^i:=\mu^i$
for all $i\ne\ell$. Then $R\boldsymbol\mu_1=R\boldsymbol\mu_2$, and
hence $\boldsymbol\mu_1$ and $\boldsymbol\mu_2$ are $R$-equ\-iv\-alent,
but $\boldsymbol\mu_1\ne\boldsymbol\mu_2$.\end{proof}

\begin{theorem}\label{lemma:semimetric} The cone\/
$\mathcal E_\kappa^+(\mathbf A)$ is a semimetric space
with the semimetric\/ $\|\boldsymbol\mu_1-\boldsymbol\mu_2\|_{\mathcal E^+_\kappa(\mathbf
A)}$ defined by\/ {\rm(\ref{vseminorm})},
and this space is isometric to its\/ $R$-im\-age. Assume now\/ $\kappa$ is strictly positive definite. Then\/
$\|\boldsymbol\mu_1-\boldsymbol\mu_2\|_{\mathcal E^+_\kappa(\mathbf
A)}$ becomes a metric if and only if\/ {\rm(\ref{ess:dis})} holds.
\end{theorem}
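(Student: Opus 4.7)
The plan is to reduce everything to the pre-Hilbert seminorm on $\mathcal E_\kappa(X)$ via the mapping $R$. For $\boldsymbol\mu_1,\boldsymbol\mu_2\in\mathcal E^+_\kappa(\mathbf A)$, Corollary \ref{relation} yields $\kappa(\boldsymbol\mu_k,\boldsymbol\mu_\ell)=\kappa(R\boldsymbol\mu_k,R\boldsymbol\mu_\ell)$ for $k,\ell=1,2$, with each of the four series
$\sum_{i,j\in I}s_is_j\kappa(\mu^i_k,\mu^j_\ell)$ absolutely convergent. Expanding $\kappa(\mu^i_1-\mu^i_2,\mu^j_1-\mu^j_2)$ by bilinearity on each summand and then rearranging the (absolutely convergent) four-fold double series, one gets
\[
\sum_{i,j\in I}s_is_j\kappa(\mu^i_1-\mu^i_2,\mu^j_1-\mu^j_2)=\sum_{k,\ell=1}^{2}(-1)^{k+\ell}\kappa(R\boldsymbol\mu_k,R\boldsymbol\mu_\ell)=\|R\boldsymbol\mu_1-R\boldsymbol\mu_2\|_\kappa^2,
\]
the last equality being legitimate since by (\ref{iff}) both $R\boldsymbol\mu_1$ and $R\boldsymbol\mu_2$ lie in the linear space $\mathcal E_\kappa(X)$, so that their difference does as well, and $\|\cdot\|_\kappa$ is the pre-Hilbert seminorm there.

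From this single identity all the desired properties of $\|\boldsymbol\mu_1-\boldsymbol\mu_2\|_{\mathcal E^+_\kappa(\mathbf A)}$ follow at once. The quantity under the square root in (\ref{vseminorm}) is nonnegative (as already observed in (\ref{pos-en})), so the square root is well defined; symmetry in $\boldsymbol\mu_1,\boldsymbol\mu_2$ is obvious; and the triangle inequality is inherited from the seminorm $\|\cdot\|_\kappa$ on $\mathcal E_\kappa(X)$. The same identity displays $R$ as an isometry of the semimetric space $(\mathcal E^+_\kappa(\mathbf A),\|\cdot\|_{\mathcal E^+_\kappa(\mathbf A)})$ onto its scalar image $R(\mathcal E^+_\kappa(\mathbf A))\subset\mathcal E_\kappa(X)$ equipped with the seminorm $\|\cdot\|_\kappa$.

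For the last assertion, suppose $\kappa$ is strictly positive definite, so that $\|\cdot\|_\kappa$ is a \emph{norm} on $\mathcal E_\kappa(X)$. Then the displayed identity shows that $\|\boldsymbol\mu_1-\boldsymbol\mu_2\|_{\mathcal E^+_\kappa(\mathbf A)}=0$ holds if and only if $R\boldsymbol\mu_1=R\boldsymbol\mu_2$, i.e., exactly when $\boldsymbol\mu_1$ and $\boldsymbol\mu_2$ are $R$-equivalent. By Lemma \ref{inj}, $R$-equivalence on $\mathcal E^+_\kappa(\mathbf A)$ coincides with identity precisely when (\ref{ess:dis}) holds, which gives the metric criterion.

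I do not anticipate any serious obstacle: the only subtle point is justifying the termwise expansion and rearrangement of the double series, but the absolute convergence statements in Corollary \ref{relation} applied to all four pairs $(\boldsymbol\mu_k,\boldsymbol\mu_\ell)$ provide exactly what is needed via Fubini--Tonelli for counting measure on $I\times I$.
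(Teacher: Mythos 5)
Your proposal is correct and follows essentially the same route as the paper: apply Corollary~\ref{relation} to the four pairs $(R\boldsymbol\mu_k,R\boldsymbol\mu_t)$, combine by bilinearity to obtain the identity $\|\boldsymbol\mu_1-\boldsymbol\mu_2\|_{\mathcal E^+_\kappa(\mathbf A)}=\|R\boldsymbol\mu_1-R\boldsymbol\mu_2\|_\kappa$, and then invoke Lemma~\ref{inj} for the metric criterion. Your explicit justification of the rearrangement via absolute convergence is exactly the step the paper leaves implicit.
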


\begin{proof}Fix any $\boldsymbol\mu_1,\boldsymbol\mu_2\in\mathcal E^+_\kappa(\mathbf A)$.
Applying (\ref{vecen}) to
$\kappa(R\boldsymbol\mu_k,R\boldsymbol\mu_t)$, $k,t=1,2$, and then
combining the equalities obtained, we get
\[\|R\boldsymbol\mu_1-R\boldsymbol\mu_2\|_\kappa^2=\sum_{i,j\in
I}\,s_is_j\kappa(\mu^i_1-\mu^i_2,\mu^j_1-\mu^j_2),\]
where the series converges absolutely. Hence, the sum on the right-hand side in (\ref{vseminorm}) is
${}\geqslant0$. When compared with
(\ref{vseminorm}), the last display yields
\begin{equation}\label{seminorm}
\|\boldsymbol\mu_1-\boldsymbol\mu_2\|_{\mathcal E^+_\kappa(\mathbf A)}=
\|R\boldsymbol\mu_1-R\boldsymbol\mu_2\|_\kappa.\end{equation}
Since $\|\cdot\|_\kappa$ is a seminorm on $\mathcal
E_\kappa(X)$, the former part of the theorem follows.

Assume now $\kappa$ is strictly positive definite. By (\ref{seminorm}), $\|\boldsymbol\mu_1-\boldsymbol\mu_2\|_{\mathcal
E_\kappa^+(\mathbf A)}=0$ if and only if $\boldsymbol\mu_1$ and $\boldsymbol\mu_2$ are $R$-equ\-iv\-al\-ent, while by Lemma~\ref{inj}, the relation of $R$-equ\-iv\-al\-ence on $\mathcal
E_\kappa^+(\mathbf A)$ is equivalent to that of identity if and only if (\ref{ess:dis}) holds.\end{proof}

In view of the isometry between $\mathcal E^+_\kappa(\mathbf A)$ and its $R$-image, contained in the-Hil\-bert space $\mathcal E_\kappa(X)$, the
topology on the semimetric space $\mathcal E^+_\kappa(\mathbf A)$ is likewise termed \emph{strong}.
As usual, $\boldsymbol\mu,\boldsymbol\nu\in\mathcal
E^+_\kappa(\mathbf A)$ are said to be \emph{equivalent\/} in the semimetric space $\mathcal
E^+_\kappa(\mathbf A)$ if
$\|\boldsymbol\mu-\boldsymbol\nu\|_{\mathcal E_\kappa^+(\mathbf
A)}=0$.

\begin{corollary}\label{lemma:potequiv}
$\boldsymbol\mu,\boldsymbol\nu\in\mathcal E^+_\kappa(\mathbf A)$
are equivalent in\/ $\mathcal E^+_\kappa(\mathbf A)$ if and only if
\[\kappa^i_{\boldsymbol\mu}(\cdot)=\kappa_{\boldsymbol\nu}^i(\cdot)\text{ \ n.e.\ for all $i\in I$}.\]
\end{corollary}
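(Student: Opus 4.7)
The plan is to reduce the corollary to a classical fact about scalar signed measures of finite energy, using the isometry from Theorem~\ref{lemma:semimetric} together with the pointwise representation~(\ref{Rpot}). Set $\sigma:=R\boldsymbol\mu-R\boldsymbol\nu$; by~(\ref{iff}) and the linearity argument of Remark~\ref{rem-convex}, $\sigma\in\mathcal E_\kappa(X)$, and the identity~(\ref{seminorm}) established in the proof of Theorem~\ref{lemma:semimetric} gives $\|\boldsymbol\mu-\boldsymbol\nu\|_{\mathcal E^+_\kappa(\mathbf A)}=\|\sigma\|_\kappa$. Subtracting~(\ref{Rpot}) for $\boldsymbol\nu$ from~(\ref{Rpot}) for $\boldsymbol\mu$, for every $i\in I$ one obtains
\[
\kappa^i_{\boldsymbol\mu}(x)-\kappa^i_{\boldsymbol\nu}(x)=s_i\kappa(x,\sigma)\quad\text{n.e.,}
\]
the right-hand side being well-defined n.e.\ as the difference of two potentials of measures of finite energy (finite n.e.\ by \cite[p.~164]{F1}). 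Since $s_i=\pm1$, the condition ``$\kappa^i_{\boldsymbol\mu}=\kappa^i_{\boldsymbol\nu}$ n.e.\ for every $i\in I$'' therefore collapses to the single scalar condition $\kappa(\cdot,\sigma)=0$ n.e.

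It thus remains to verify the classical equivalence: for $\sigma\in\mathcal E_\kappa(X)$ one has $\|\sigma\|_\kappa=0$ if and only if $\kappa(\cdot,\sigma)=0$ n.e. For the direction $(\Leftarrow)$, I would use that $\sigma^\pm\in\mathcal E^+_\kappa(X)$, having finite energy, do not charge any set of inner capacity zero (a standard consequence of positive definiteness, cf.\ \cite{F1}). Hence $\kappa(\sigma^\pm,\sigma)=\int\kappa(x,\sigma)\,d\sigma^\pm(x)=0$, so $\|\sigma\|_\kappa^2=\kappa(\sigma^+-\sigma^-,\sigma)=0$. For the converse $(\Rightarrow)$, I would argue by contradiction: were $E_+:=\{x:\kappa(x,\sigma)>0\}$ (or the analogous set $E_-$) of positive inner capacity, Lemma~\ref{cap0} would furnish a nonzero $\tau\in\mathcal E^+_\kappa(E_+)$; Fubini, legitimized by the Cauchy--Schwarz bound on $\kappa(\tau,\sigma^\pm)$, would then yield $\kappa(\sigma,\tau)=\int_{E_+}\kappa(x,\sigma)\,d\tau>0$, contradicting $|\kappa(\sigma,\tau)|\leqslant\|\sigma\|_\kappa\|\tau\|_\kappa=0$.

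The main technical subtlety is ensuring well-definedness of $\kappa(\cdot,\sigma)$ and $\tau$-integrability of this potential throughout. Both points follow from the same pair of facts already exploited in the proof of Corollary~\ref{relation}: potentials of measures of finite energy are finite n.e., and finite-energy measures do not charge n.e.-exceptional sets. No hypothesis on $\kappa$ beyond positive definiteness is required, so the corollary holds in the full generality of Section~\ref{sec:en}.
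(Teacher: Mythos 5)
Your argument is correct and follows essentially the same route as the paper's proof: both reduce the statement, via the isometry \eqref{seminorm} and the representation \eqref{Rpot}, to the scalar equivalence that $\|\sigma\|_\kappa=0$ if and only if $\kappa(\cdot,\sigma)=0$ n.e.\ for $\sigma:=R\boldsymbol\mu-R\boldsymbol\nu\in\mathcal E_\kappa(X)$. The only difference is that the paper simply cites this scalar fact as \cite[Lemma~3.2.1(a)]{F1}, whereas you prove it inline; your two-sided argument (finite-energy measures do not charge sets of inner capacity zero for one direction, Lemma~\ref{cap0} plus Cauchy--Schwarz for the other) is a sound rendering of that cited lemma.
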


\begin{proof}In consequence of (\ref{seminorm}), $\boldsymbol\mu$ and $\boldsymbol\nu$ are equivalent
in $\mathcal E^+_\kappa(\mathbf A)$ if and only if $R\boldsymbol\mu$
and $R\boldsymbol\nu$ are equivalent in $\mathcal E_\kappa(X)$,
which in turn holds if and only if
$\kappa(\cdot,R\boldsymbol\mu)=\kappa(\cdot,R\boldsymbol\nu)$ n.e. \cite[Lemma~3.2.1(a)]{F1}. Combining this with (\ref{Rpot})
establishes the corollary.\end{proof}

Being nonlinear, $\mathcal E_\kappa^+(\mathbf A)$ is not normed. Nevertheless, for any of its elements $\boldsymbol\mu$ it is convenient to write $\|\boldsymbol\mu\|_{\mathcal E_\kappa^+(\mathbf
A)}:=\|\boldsymbol\mu-\boldsymbol0\|_{\mathcal E_\kappa^+(\mathbf
A)}$. Then
\begin{equation}\label{sem-norm}\|\boldsymbol\mu\|^2_{\mathcal E_\kappa^+(\mathbf
A)}=\kappa(\boldsymbol\mu,\boldsymbol\mu)=\kappa(R\boldsymbol\mu,R\boldsymbol\mu)=\|R\boldsymbol\mu\|^2_\kappa.\end{equation}

\section{Minimum energy problems for a generalized condenser}\label{sec-Gauss}

\subsection{Formulation of the problems}\label{sec:form} For a (positive definite) kernel $\kappa$
on $X$ and a (generalized) condenser $\mathbf A=(A_i)_{i\in I}$, we shall consider minimum energy problems
with external fields over certain subclasses of $\mathcal
E^+_\kappa(\mathbf A)$.

Fix a vector-valued \emph{external field\/} $\mathbf f=(f_i)_{i\in I}$, where each
$f_i:X\to[-\infty,\infty]$ is $\mu$-meas\-ur\-able for every
$\mu\in\mathfrak M^+(X)$. The \emph{$\mathbf f$-weighted vector
potential\/} and the \emph{$\mathbf f$-weighted energy\/} of
$\boldsymbol\mu\in\mathcal E_\kappa^+(\mathbf A)$ are defined by
\begin{align}\label{wpot}\mathbf W_{\kappa,\mathbf f}^{\boldsymbol\mu}&:=
\kappa_{\boldsymbol\mu}+\mathbf f,\\
\label{wen}G_{\kappa,\mathbf
f}(\boldsymbol\mu)&:=\kappa(\boldsymbol\mu,\boldsymbol\mu)+
2\langle\mathbf f,\boldsymbol\mu\rangle,\end{align} respectively.
Let $\mathcal E_{\kappa,\mathbf f}^+(\mathbf A)$ consist of all
$\boldsymbol\mu\in\mathcal E_\kappa^+(\mathbf A)$ with finite
$\langle\mathbf f,\boldsymbol\mu\rangle$, which means that every
$f_i$, $i\in I$, is $\mu^i$-int\-egr\-able and the series $\sum_{i\in I}\,\langle
f_i,\mu^i\rangle$ converges absolutely.

Fix a numerical vector $\mathbf a=(a_i)_{i\in I}$ with $a_i>0$,
$i\in I$, a vector-valued function $\mathbf g=(g_i)_{i\in I}$,
where all the $g_i: X\to(0,\infty)$ are (finitely) continuous, and write
\[\mathfrak M^+(\mathbf A,\mathbf a,\mathbf g):=
\bigl\{\boldsymbol\mu\in\mathfrak M^+(\mathbf A): \ \langle
g_i,\mu^i\rangle=a_i\text{ \ for all\ }i\in I\bigr\}.\]
If $\mathcal E^+_{\kappa,\mathbf f}(\mathbf
A,\mathbf a,\mathbf g):=\mathcal E^+_{\kappa,\mathbf f}(\mathbf
A)\cap\mathfrak M^+(\mathbf A,\mathbf a,\mathbf g)$ is nonempty, or
equivalently if
\[G_{\kappa,\mathbf f}(\mathbf A,\mathbf a,\mathbf
g):=\inf_{\boldsymbol\mu\in\mathcal E^+_{\kappa,\mathbf f}(\mathbf
A,\mathbf a,\mathbf g)}\,G_{\kappa,\mathbf
f}(\boldsymbol\mu)<\infty,\] then the following
(\emph{unconstrained\/}) \emph{$\mathbf f$-weighted minimum
energy problem}, also known in the literature as the \emph{Gauss
variational problem\/} (see, e.g., \cite{Gauss,O,ST,ZPot2,ZPot3,FZ3}), makes sense.

\begin{problem}\label{pr1}Does there exist\/
$\boldsymbol\lambda_{\mathbf A}\in\mathcal E^+_{\kappa,\mathbf
f}(\mathbf A,\mathbf a,\mathbf g)$ with
\[G_{\kappa,\mathbf f}(\boldsymbol\lambda_{\mathbf A})=
G_{\kappa,\mathbf f}(\mathbf A,\mathbf a,\mathbf
g)?\]
\end{problem}

Let $\mathfrak C(A_i)$, $i\in I$, consist of all $\xi^i\in\mathfrak
M^+(A_i)$ with $\langle g_i,\xi^i\rangle>a_i$; those $\xi^i$ are
said to be (upper) \emph{constraints\/} for elements of $\mathfrak M^+(A_i,a_i,g_i)$. Given $\xi^i\in\mathfrak C(A_i)$, write
\begin{align*}\mathfrak M^{\xi^i}(A_i,a_i,g_i)&:=\bigl\{\mu^i\in\mathfrak
M^+(A_i,a_i,g_i): \ \mu^i\leqslant\xi^i\bigr\},\\
\mathcal E^{\xi^i}_\kappa(A_i,a_i,g_i)&:=\mathcal
E^+_\kappa(A_i)\cap\mathfrak M^{\xi^i}(A_i,a_i,g_i),\end{align*}
where $\mu^i\leqslant\xi^i$ means that $\xi^i-\mu^i\geqslant0$.

Fix $I_0\subset I$, which might be empty. We generalize
Problem~\ref{pr1} by assuming that for every $i\in I_0$, the
$i$-com\-po\-nents $\mu^i$ of the (new) admissible measures
$\boldsymbol\mu$ are now additionally required not to exceed a fixed
constraint $\xi^i\in\mathfrak C(A_i)$; that is,
$\mu^i\in\mathfrak M^{\xi^i}(A_i,a_i,g_i)$ for all $i\in I_0$.
To be precise, write $\boldsymbol\sigma:=(\sigma^i)_{i\in I}$, where
\[\sigma^i:=\left\{
\begin{array}{cll} \xi^i &  \text{if} & i\in I_0,\\
\infty &  \text{if}  & i\in I\setminus I_0,\\ \end{array} \right.
\]
and define
\begin{align*}\mathfrak M^{\boldsymbol\sigma}(\mathbf A,\mathbf a,\mathbf
g)&:=\prod_{i\in I}\,\mathfrak M^{\sigma^i}(A_i,a_i,g_i),\\
\mathcal E_\kappa^{\boldsymbol\sigma}(\mathbf A,\mathbf a,\mathbf
g)&:=\mathcal E_\kappa^+(\mathbf A)\cap\mathfrak
M^{\boldsymbol\sigma}(\mathbf A,\mathbf a,\mathbf g),\\
\mathcal E^{\boldsymbol\sigma}_{\kappa,\mathbf f}(\mathbf A,\mathbf a,\mathbf
g)&:=\mathcal E^+_{\kappa,\mathbf f}(\mathbf A)\cap\mathfrak
M^{\boldsymbol\sigma}(\mathbf A,\mathbf a,\mathbf g).\end{align*}
Here the formal notation $\mathfrak M^{\infty}(A_i,a_i,g_i)$ means that
\emph{no\/} active upper constraint is imposed on
$\mu^i\in\mathfrak M^+(A_i,a_i,g_i)$, i.e.,
\[\mathfrak M^{\sigma^i}(A_i,a_i,g_i)=\mathfrak M^+(A_i,a_i,g_i)\text{ \  for all\ }i\in
I\setminus I_0.\]

If $\mathcal E^{\boldsymbol\sigma}_{\kappa,\mathbf f}(\mathbf A,\mathbf a,\mathbf
g)$ is nonempty (\emph{which will always be tacitly
required\/}), or equivalently if\,\footnote{See
Lemma~\ref{l:gfinite} below providing sufficient conditions for
(\ref{Gconfin}) to hold. Also note that the (nonempty) class $\mathcal
E^{\boldsymbol\sigma}_{\kappa,\mathbf f}(\mathbf A,\mathbf a,\mathbf
g)$ is \emph{convex\/} (cf.\ Remark~\ref{rem-convex}).}
\begin{equation}\label{Gconfin}
G_{\kappa,\mathbf f}^{\boldsymbol\sigma}(\mathbf A,\mathbf a,\mathbf
g):=\inf_{\boldsymbol\mu\in\mathcal
E^{\boldsymbol\sigma}_{\kappa,\mathbf f}(\mathbf A,\mathbf a,\mathbf
g)}\,G_{\kappa,\mathbf f}(\boldsymbol\mu)<\infty,\end{equation} then
the following generalization of Problem~\ref{pr1} makes
sense.

\begin{problem}\label{pr2}Does there exist\/
$\boldsymbol\lambda^{\boldsymbol\sigma}_{\mathbf A}\in\mathcal
E^{\boldsymbol\sigma}_{\kappa,\mathbf f}(\mathbf A,\mathbf a,\mathbf
g)$ with
\[G_{\kappa,\mathbf f}(\boldsymbol\lambda^{\boldsymbol\sigma}_{\mathbf A})=
G_{\kappa,\mathbf f}^{\boldsymbol\sigma}(\mathbf A,\mathbf a,\mathbf
g)?\]
\end{problem}

Observe that under the (permanent) assumption (\ref{Gconfin}),
Problems~\ref{pr1} also makes sense. In fact, $\mathcal
E^{\boldsymbol\sigma}_{\kappa,\mathbf f}(\mathbf A,\mathbf a,\mathbf
g)\subset\mathcal E^+_{\kappa,\mathbf f}(\mathbf A,\mathbf a,\mathbf
g)$, and hence \begin{equation}\label{in1}G_{\kappa,\mathbf
f}(\mathbf A,\mathbf a,\mathbf g)\leqslant G_{\kappa,\mathbf
f}^{\boldsymbol\sigma}(\mathbf A,\mathbf a,\mathbf
g)<\infty.\end{equation}

Problem~\ref{pr2} reduces to Problem~\ref{pr1} if $I_0=\varnothing$,
while in the case $I_0=I$, Problem~\ref{pr2} is known as the
\emph{constrained Gauss variational problem\/} (see, e.g.,
\cite{DS,Z9,FZ2,FZ3,DFHSZ2}). However, the Gauss variational
problem in either constrained or unconstrained setting has not been
studied yet under the present requirements, where $\mathbf A$ is a
collection of infinitely many touching Borel plates (cf.\ Remark~\ref{rem-W}
below). Finally, in the case where $I_0$ is a nonempty proper subset
of $I$, Problem~\ref{pr2} seems to be newly introduced (even for a
standard condenser), though such problem with mixed upper boundary
conditions looks quite natural and also promising in relation to its
possible applications (cf.\ Remark~\ref{rem:appl}).

\begin{remark}\label{rem-W}The most general study of Problem~\ref{pr1} for a standard condenser of infinitely
many (closed) plates seems to have been
provided in \cite{ZPot2,ZPot3}. It includes, e.g., a complete description of the set of all $\mathbf a=(a_i)_{i\in I}$ for which minimizers $\boldsymbol\lambda_{\mathbf A}$ exist as well as an analysis of their uniqueness, vague compactness, and strong and vague continuity of $\boldsymbol\lambda_{\mathbf A}$ when $\mathbf A$ varies. The weighted potentials of minimizers are described, and their characteristic
properties are singled out.\end{remark}

\subsection{Uniqueness of solutions}\label{sec:unique} We next show that the set of the solutions to
Problem~\ref{pr2} is contained in a certain
equivalence class in $\mathcal E_\kappa^+(\mathbf A)$.

\begin{lemma}\label{l:unique}Any two solutions\/ $\boldsymbol\lambda$ and\/
$\widehat{\boldsymbol\lambda}$ to Problem\/~{\rm\ref{pr2}}
{\rm(}whenever these exist\/{\rm)} are equivalent in\/ $\mathcal
E_\kappa^+(\mathbf A)$, i.e., $\|\boldsymbol\lambda-
\widehat{\boldsymbol\lambda}\|_{\mathcal E_\kappa^+(\mathbf A)}=0$.
\end{lemma}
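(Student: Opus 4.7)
The plan is to run the standard convexity/parallelogram argument, transferred via the isometry $R$ into the pre-Hilbert space $\mathcal E_\kappa(X)$, where the parallelogram identity is literally available.

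First, I would verify that the midpoint $\boldsymbol\eta:=(\boldsymbol\lambda+\widehat{\boldsymbol\lambda})/2$ is again admissible, i.e., $\boldsymbol\eta\in\mathcal E^{\boldsymbol\sigma}_{\kappa,\mathbf f}(\mathbf A,\mathbf a,\mathbf g)$. By Remark~\ref{rem-convex}, $\mathcal E^+_\kappa(\mathbf A)$ is a convex cone, so $\boldsymbol\eta\in\mathcal E^+_\kappa(\mathbf A)$. The linearity of $\langle f_i,\cdot\rangle$ on each component and the absolute convergence of $\sum_{i\in I}\langle f_i,\lambda^i\rangle$ and $\sum_{i\in I}\langle f_i,\widehat\lambda^i\rangle$ yield $\boldsymbol\eta\in\mathcal E^+_{\kappa,\mathbf f}(\mathbf A)$. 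The normalization $\langle g_i,\eta^i\rangle=a_i$ and the constraint $\eta^i\leqslant\xi^i$ for $i\in I_0$ are preserved under convex combination, so $\boldsymbol\eta\in\mathfrak M^{\boldsymbol\sigma}(\mathbf A,\mathbf a,\mathbf g)$ as well.

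Next, I would apply $R$ and pass to $\mathcal E_\kappa(X)$. Since $R$ is linear on finite sums of elements of $\mathcal E^+_\kappa(\mathbf A)$ (in particular $R\boldsymbol\eta=\tfrac12(R\boldsymbol\lambda+R\widehat{\boldsymbol\lambda})$), identity (\ref{sem-norm}) together with the parallelogram identity in the pre-Hil\-bert space $\mathcal E_\kappa(X)$ gives
\begin{equation*}
4\,\kappa(\boldsymbol\eta,\boldsymbol\eta)
=\|R\boldsymbol\lambda+R\widehat{\boldsymbol\lambda}\|_\kappa^2
=2\|R\boldsymbol\lambda\|_\kappa^2+2\|R\widehat{\boldsymbol\lambda}\|_\kappa^2
-\|R\boldsymbol\lambda-R\widehat{\boldsymbol\lambda}\|_\kappa^2.
\end{equation*}
Adding $8\langle\mathbf f,\boldsymbol\eta\rangle=4\langle\mathbf f,\boldsymbol\lambda\rangle+4\langle\mathbf f,\widehat{\boldsymbol\lambda}\rangle$ to both sides and using (\ref{seminorm}) together with (\ref{wen}), I obtain
\begin{equation*}
4G_{\kappa,\mathbf f}(\boldsymbol\eta)=2G_{\kappa,\mathbf f}(\boldsymbol\lambda)+2G_{\kappa,\mathbf f}(\widehat{\boldsymbol\lambda})-\|\boldsymbol\lambda-\widehat{\boldsymbol\lambda}\|^2_{\mathcal E^+_\kappa(\mathbf A)}.
\end{equation*}

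To conclude, I would plug in $G_{\kappa,\mathbf f}(\boldsymbol\lambda)=G_{\kappa,\mathbf f}(\widehat{\boldsymbol\lambda})=G_{\kappa,\mathbf f}^{\boldsymbol\sigma}(\mathbf A,\mathbf a,\mathbf g)$ and use $G_{\kappa,\mathbf f}(\boldsymbol\eta)\geqslant G_{\kappa,\mathbf f}^{\boldsymbol\sigma}(\mathbf A,\mathbf a,\mathbf g)$, which holds by the admissibility of $\boldsymbol\eta$ established in the first step. The displayed equation then collapses to $\|\boldsymbol\lambda-\widehat{\boldsymbol\lambda}\|^2_{\mathcal E^+_\kappa(\mathbf A)}\leqslant 0$, and since this semimetric is nonnegative by (\ref{pos-en}) applied to the vector measure generating the difference, equality must hold. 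The only genuine point requiring care is the first step (checking admissibility of the midpoint, in particular preservation of the upper constraints and of the absolute convergence defining $\langle\mathbf f,\boldsymbol\eta\rangle$); the rest is a routine parallelogram computation transported through the isometry $R$.
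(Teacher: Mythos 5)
Your proposal is correct and follows essentially the same route as the paper: convexity of the admissible class, transfer through the isometry $R$ into the pre-Hilbert space $\mathcal E_\kappa(X)$, and the parallelogram identity, with the minimality of both solutions forcing $\|R\boldsymbol\lambda-R\widehat{\boldsymbol\lambda}\|_\kappa^2\leqslant0$. The only cosmetic remark is that the nonnegativity of the semimetric comes from (\ref{seminorm}) (i.e., Theorem~\ref{lemma:semimetric}) rather than from (\ref{pos-en}), which concerns positive vector measures.
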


\begin{proof}This can be shown in a way similar to that in
\cite[Proof of Lemma~5.1]{ZPot2}, based
on the convexity of $\mathcal E^{\boldsymbol\sigma}_{\kappa,\mathbf
f}(\mathbf A,\mathbf a,\mathbf g)$, isometry between $\mathcal
E^+_\kappa(\mathbf A)$ and its (scalar) $R$-im\-age, and the
pre-Hil\-bert structure on $\mathcal E_\kappa(X)$.
Indeed, we get from (\ref{Gconfin}), (\ref{wen}), and (\ref{sem-norm})
\[4G_{\kappa,\mathbf f}^{\boldsymbol{\sigma}}(\mathbf A,\mathbf
a,\mathbf g)\leqslant4G_{\kappa,\mathbf
f}\Bigl(\frac{\boldsymbol\lambda+\widehat{\boldsymbol\lambda}}{2}\Bigr)=
\|R\boldsymbol\lambda+R\widehat{\boldsymbol\lambda}\|_\kappa^2+4\langle\mathbf
f,\boldsymbol\lambda+\widehat{\boldsymbol\lambda}\rangle.\] On the
other hand, applying the parallelogram identity in $\mathcal
E_\kappa(X)$ to $R\boldsymbol\lambda$ and
$R\widehat{\boldsymbol\lambda}$ and then adding and subtracting
$4\langle\mathbf
f,\boldsymbol\lambda+\widehat{\boldsymbol\lambda}\rangle$, we obtain
\[\|R\boldsymbol\lambda-
R\widehat{\boldsymbol\lambda}\|_\kappa^2=
-\|R\boldsymbol\lambda+R\widehat{\boldsymbol\lambda}\|_\kappa^2-4\langle\mathbf
f,\boldsymbol\lambda+
\widehat{\boldsymbol\lambda}\rangle+2G_{\kappa,\mathbf{f}}(\boldsymbol\lambda)+
2G_{\kappa,\mathbf{f}}(\widehat{\boldsymbol\lambda}).\] When
combined with the preceding relation, this yields
\[0\leqslant\|R\boldsymbol\lambda-
R\widehat{\boldsymbol\lambda}\|^2_\kappa\leqslant-
4G_{\kappa,\mathbf{f}}^{\boldsymbol{\sigma}}(\mathbf{A},\mathbf{a},\mathbf{g})+
2G_{\kappa,\mathbf{f}}(\boldsymbol\lambda)+
2G_{\kappa,\mathbf{f}}(\widehat{\boldsymbol\lambda})=0,\]
which in view of (\ref{seminorm}) establishes the lemma.
\end{proof}

\begin{corollary}\label{c:unique}If\/ $\kappa$ is strictly positive definite and the\/
$A_i$, $i\in I$, are mutually essentially disjoint, then a solution
to Problem\/~{\rm\ref{pr2}} is unique.\end{corollary}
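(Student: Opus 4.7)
\begin{pf}[Proof plan for Corollary~\ref{c:unique}]
The plan is to combine Lemma~\ref{l:unique} with the metric criterion of Theorem~\ref{lemma:semimetric}, so essentially no new work is required. Let $\boldsymbol\lambda$ and $\widehat{\boldsymbol\lambda}$ be any two solutions to Problem~\ref{pr2}.

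First, I would apply Lemma~\ref{l:unique} directly to conclude that the two minimizers are equivalent in the semimetric space $\mathcal E_\kappa^+(\mathbf A)$, that is,
\[
\|\boldsymbol\lambda - \widehat{\boldsymbol\lambda}\|_{\mathcal E_\kappa^+(\mathbf A)} = 0.
\]
This step uses the convexity of $\mathcal E^{\boldsymbol\sigma}_{\kappa,\mathbf f}(\mathbf A,\mathbf a,\mathbf g)$, the isometry $R$, and the parallelogram identity in the pre-Hilbert space $\mathcal E_\kappa(X)$, and it does not require any additional hypothesis on $\kappa$ or on the plates.

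Next, I would invoke the second assertion of Theorem~\ref{lemma:semimetric}: under the standing assumption that $\kappa$ is strictly positive definite, the functional $\|\boldsymbol\mu_1-\boldsymbol\mu_2\|_{\mathcal E_\kappa^+(\mathbf A)}$ is a genuine \emph{metric} on $\mathcal E_\kappa^+(\mathbf A)$ if and only if condition~(\ref{ess:dis}) holds, i.e., if and only if the $A_i$, $i\in I$, are mutually essentially disjoint. Both of these hypotheses are available by assumption, so $\|\cdot\|_{\mathcal E_\kappa^+(\mathbf A)}$ separates points. Consequently the vanishing of the semimetric forces $\boldsymbol\lambda = \widehat{\boldsymbol\lambda}$, which is the desired uniqueness.

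There is no real obstacle here: the work has been done in Lemma~\ref{inj}, Theorem~\ref{lemma:semimetric}, and Lemma~\ref{l:unique}. The only thing to double-check is that the essential disjointness hypothesis in Corollary~\ref{c:unique} is precisely condition~(\ref{ess:dis}) used in Theorem~\ref{lemma:semimetric}, which is immediate from the wording.
\end{pf}
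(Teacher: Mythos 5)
Your plan is exactly the paper's own argument: Lemma~\ref{l:unique} gives $\|\boldsymbol\lambda-\widehat{\boldsymbol\lambda}\|_{\mathcal E_\kappa^+(\mathbf A)}=0$ for any two solutions, and the second part of Theorem~\ref{lemma:semimetric} (resting on Lemma~\ref{inj}) upgrades the semimetric to a metric under strict positive definiteness and condition~(\ref{ess:dis}), forcing $\boldsymbol\lambda=\widehat{\boldsymbol\lambda}$. This is correct and coincides with the proof in the paper.
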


\begin{proof}This follows from Lemma~\ref{l:unique} when combined with Theorem~\ref{lemma:semimetric}.
\end{proof}

The following example shows that Corollary~\ref{c:unique} fails in
general if the assumption of mutual essential disjointness of
the $A_i$, $i\in I$, is omitted from its hypotheses.

\begin{example}\label{ex:nonun}Let $X=\mathbb R^n$, $n\geqslant3$, $\kappa=\kappa_2$, $I=I^+=\{1,2\}$, $I_0=\{1\}$, $a_1=a_2=1$, $g_1\equiv g_2\equiv1$,
$f_1\equiv f_2\equiv0$, and let $A_1=A_2=K_0$, $K_0$ being an $(n-1)$-dim\-en\-sional unit sphere. Let $\lambda$ denote the $\kappa_2$-cap\-ac\-it\-ary measure on
$K_0$, which exists (cf.\ Remark~\ref{remark} and Example~\ref{rem:clas}). By symmetry reasons, $\lambda$ coincides up to a normalizing factor with
the $(n-1)$-dim\-en\-sional surface measure $m_{n-1}$ on $K_0$.
Define $\xi^1:=3\lambda$, and consider Problem~\ref{pr2} with these data. It is obvious that
$\boldsymbol\lambda=(\lambda,\lambda)$ is one of its solutions. Choose now compact
disjoint sets $K_k\subset K_0$, $k=1,2$, so that
$m_{n-1}(K_1)=m_{n-1}(K_2)>0$, and define
$\nu=\lambda|_{K_1}-\lambda|_{K_2}$. Then
$\widehat{\boldsymbol\lambda}=(\lambda-\nu,\lambda+\nu)$
is an admissible measure for Problem~\ref{pr2} such that
$R\widehat{\boldsymbol\lambda}=R\boldsymbol\lambda$, and hence
$\kappa(\widehat{\boldsymbol\lambda},\widehat{\boldsymbol\lambda})=
\kappa(\boldsymbol\lambda,\boldsymbol\lambda)$. Thus $\widehat{\boldsymbol\lambda}$ along with $\boldsymbol\lambda$
solves
Problem~\ref{pr2}, though $\widehat{\boldsymbol\lambda}\ne\boldsymbol\lambda$.\end{example}

\section{Permanent assumptions. Supplementary results}\label{sec:perm}

In all that follows we require that either $X$ is countable
at infinity, or
\begin{equation}\label{infg}g_{i,\inf}:=\inf_{x\in
A_i}\,g_i(x)>0\text{ \ for all\ }i\in I.\end{equation}

\begin{lemma}\label{a.e.}Let\/ $\mu^i\in\mathcal E^+_\kappa(A_i)$ be such
that\/ $\langle g_i,\mu^i\rangle=c<\infty$. Then a proposition\/ $\mathcal P(x)$
holds\/ $\mu^i$-almost everywhere\/ {\rm(}$\mu^i$-a.e.\/{\rm)},
provided that it holds n.e.\ on\/~$A_i$.\end{lemma}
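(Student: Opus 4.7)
Let $N := \{x \in A_i : \mathcal P(x) \text{ fails}\}$, so $c_\kappa(N) = 0$ by hypothesis. The task is to show $N$ is $\mu^i$-negligible.

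The heart of the argument is a reduction to compact subsets of $N$. Inner capacity is monotone under inclusion via Lemma~\ref{cap0}: for any $M \subset N$, $\mathcal E_\kappa^+(M) \subset \mathcal E_\kappa^+(N) = \{0\}$, so $c_\kappa(M) = 0$. For any compact $K \subset N$, the trace $\mu^i|_K$ is a positive Radon measure on $X$ carried by $K$, and
\[
\kappa(\mu^i|_K, \mu^i|_K) = \int_{K \times K} \kappa \, d(\mu^i \otimes \mu^i) \leqslant \kappa(\mu^i, \mu^i) < \infty,
\]
the inequality relying on $\kappa \geqslant 0$ on $X \times X$ (per the convention of $\Psi(X \times X)$ whenever $X$ is non-compact; the case of compact $X$ is immediate since then $I$ is finite). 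Thus $\mu^i|_K \in \mathcal E_\kappa^+(K) = \{0\}$ by Lemma~\ref{cap0}, and hence $\mu^i(K) = 0$.

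To upgrade ``$\mu^i$ does not charge any compact subset of $N$'' to ``$N$ is $\mu^i$-negligible'', I would invoke the permanent assumption of Section~\ref{sec:perm}: under (\ref{infg}) we get $\mu^i(A_i) \leqslant c/g_{i,\inf} < \infty$, while under ``$X$ countable at infinity'' the measure $\mu^i$ is automatically $\sigma$-finite. In either case, inner regularity of the Radon measure $\mu^i$ on its $\sigma$-finite Borel carrier $A_i$ yields $\mu^i_*(N) = 0$, and a standard Borel-hull argument --- using the countable subadditivity of inner capacity on Borel sets (Fuglede's Lemma~2.3.5, already invoked in the proof of Lemma~\ref{l:n-cl}) to enlarge $N$ to a Borel superset of inner capacity zero, and then re-applying the preceding step to it --- upgrades this to full $\mu^i$-negligibility of $N$. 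The main obstacle is this last measure-theoretic upgrade, though it is routine given the $\sigma$-finiteness of $\mu^i$; the substance of the lemma lies in the two-line energy estimate placing $\mu^i|_K$ in $\mathcal E_\kappa^+(K)$, followed by Lemma~\ref{cap0}.
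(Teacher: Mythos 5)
Your reduction to compact subsets is sound: for compact $K\subset N$ one has $\mathcal E^+_\kappa(K)\subset\mathcal E^+_\kappa(N)=\{0\}$ by Lemma~\ref{cap0}, the trace $\mu^i|_K$ has finite energy and is carried by $K$, hence $\mu^i(K)=0$; this yields $\mu^i_*(N)=0$. (Your aside about compact $X$ is off the mark --- the finiteness of $I$ is irrelevant here; what saves the energy estimate in that case is that $\kappa$, being l.s.c.\ on a compact space, is bounded below while $\mu^i$ is bounded.) The genuine gap is the final ``Borel-hull'' step. A set of \emph{inner} capacity zero need not be contained in any Borel (or even $\mu^i$-measurable) set of inner capacity zero --- this is precisely the inner/outer distinction --- and the countable subadditivity of $c_\kappa$ on Borel sets concerns unions, not hulls, so it cannot manufacture such a superset. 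Consequently your argument delivers only $\mu^i_*(N)=0$, whereas ``$\mathcal P$ holds $\mu^i$-a.e.''\ requires $N$ to be $\mu^i$-negligible, i.e.\ of \emph{outer} measure zero; for a non-measurable exceptional set these are genuinely different (compare a Vitali set for Lebesgue measure), so the upgrade is not ``routine''.

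The paper does not attempt to bridge this by a hull construction. It invokes \cite[Lemma~2.3.1(iii)]{F1}, which asserts directly that a set of inner capacity zero is \emph{locally} $\mu$-negligible for every $\mu\in\mathcal E_\kappa^+(X)$ --- a statement about outer measure on compact pieces, not merely about inner measure --- and only then uses exactly your $\sigma$-finiteness dichotomy (either $X$ is countable at infinity, or (\ref{infg}) forces $\mu^i(X)\leqslant c\,g_{i,\inf}^{-1}<\infty$) to pass from locally negligible to negligible. So the $\sigma$-finiteness half of your proof matches the paper's; the inner-to-outer half must be replaced by the citation of Fuglede's lemma (or an actual proof of it), not by a Borel hull.
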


\begin{proof}The set $N$ of all $x\in A_i$ for which $\mathcal P(x)$ fails has inner capacity zero, and hence it is locally
$\mu^i$-neg\-lig\-ible \cite[Lemma~2.3.1(iii)]{F1}. Furthermore, $N$ is \mbox{$\mu^i$-$\sigma$}-fi\-ni\-te. This is obvious if $X$ is countable at infinity, while
otherwise (\ref{infg}) holds, and therefore
\begin{equation}\label{gmasses}\mu^i(X)\leqslant
cg_{i,\inf}^{-1}<\infty.\end{equation}
Being locally $\mu^i$-neg\-lig\-ible and \mbox{$\mu^i$-$\sigma$}-fi\-ni\-te, $N$ is $\mu^i$-neg\-lig\-ible as claimed.\end{proof}

When speaking of an external field $\mathbf f=(f_i)_{i\in I}$, we
shall henceforth tacitly assume that Case~I or Case~II holds,
where:
\begin{itemize}
\item[\rm I.] \emph{For every\/ $i\in I$, $f_i\in\Psi(X)$};
\item[\rm II.] \emph{For every\/ $i\in I$, $f_i=s_i\kappa(\cdot,\zeta)$, where a\/
{\rm(}signed\/{\rm)} $\zeta\in\mathcal E_\kappa(X)$ is given}.
\end{itemize}

\begin{lemma}\label{caseii} If Case\/~{\rm II} takes place,
then\/ $\mathcal E_\kappa^+(\mathbf A)=\mathcal E_{\kappa,\mathbf
f}^+(\mathbf A)$; and moreover
\begin{equation}\label{yusss}
G_{\kappa,\mathbf
f}(\boldsymbol\mu)=\|R\boldsymbol\mu+\zeta\|_\kappa^2-\|\zeta\|_\kappa^2\text{
\ for all\ }\boldsymbol\mu\in\mathcal E^+_\kappa(\mathbf
A).\end{equation}
\end{lemma}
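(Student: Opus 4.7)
\begin{pf} The plan is to push everything through the isometry $R$ between $\mathcal E^+_\kappa(\mathbf A)$ and its scalar $R$-image, using Lemma~\ref{integral} to handle the series defining $\langle\mathbf f,\boldsymbol\mu\rangle$.

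Fix $\boldsymbol\mu\in\mathcal E^+_\kappa(\mathbf A)$. By Corollary~\ref{pot.ener}, $R\boldsymbol\mu\in\mathcal E_\kappa(X)$. First I would observe that the very fact $\kappa(R\boldsymbol\mu,R\boldsymbol\mu)<\infty$ forces $\kappa((R\boldsymbol\mu)^\pm,(R\boldsymbol\mu)^\pm)<\infty$ (otherwise the energy of $R\boldsymbol\mu$ could not be finite in either of the two admissible regimes in the definition), and likewise $\kappa(\zeta^\pm,\zeta^\pm)<\infty$ since $\zeta\in\mathcal E_\kappa(X)$. It follows by expansion and the Cauchy--Schwarz inequality that $|R\boldsymbol\mu|=(R\boldsymbol\mu)^++(R\boldsymbol\mu)^-\in\mathcal E^+_\kappa(X)$ and $\zeta^\pm\in\mathcal E^+_\kappa(X)$, and therefore $\kappa(|R\boldsymbol\mu|,\zeta^\pm)<\infty$ by Cauchy--Schwarz. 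In other words, the l.s.c.\ function $\psi^\pm:=\kappa(\cdot,\zeta^\pm)$ (which belongs to $\Psi(X)$: nonnegative when $X$ is noncompact because then so is $\kappa$ by the convention on $\Psi(X\times X)$) is $|R\boldsymbol\mu|$-integrable.

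Applying Lemma~\ref{integral} to $\boldsymbol\omega=\boldsymbol\mu$, $\mathbf B=\mathbf A$, and $\psi=\psi^\pm$, I obtain
\[\sum_{i\in I}\,|\langle\psi^\pm,\mu^i\rangle|=\sum_{i\in I}\,\kappa(\mu^i,\zeta^\pm)<\infty\text{ \ and \ }\kappa(R\boldsymbol\mu,\zeta^\pm)=\sum_{i\in I}\,s_i\kappa(\mu^i,\zeta^\pm).\]
In particular, $\kappa(\cdot,\zeta^\pm)$ is $\mu^i$-in\-teg\-r\-able for every $i\in I$, and hence $\kappa(\cdot,\zeta)=\kappa(\cdot,\zeta^+)-\kappa(\cdot,\zeta^-)$ is well defined $\mu^i$-a.e.\ and $\mu^i$-integrable, with $\langle f_i,\mu^i\rangle=s_i\kappa(\mu^i,\zeta)$. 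Subtracting the two identities above and bounding $|\kappa(\mu^i,\zeta)|\leqslant\kappa(\mu^i,\zeta^+)+\kappa(\mu^i,\zeta^-)$, I conclude that
\[\sum_{i\in I}\,|\langle f_i,\mu^i\rangle|<\infty\text{ \ and \ }\langle\mathbf f,\boldsymbol\mu\rangle=\sum_{i\in I}\,s_i\kappa(\mu^i,\zeta)=\kappa(R\boldsymbol\mu,\zeta).\]
This shows $\boldsymbol\mu\in\mathcal E^+_{\kappa,\mathbf f}(\mathbf A)$, and since the reverse inclusion is trivial from the definitions, $\mathcal E^+_\kappa(\mathbf A)=\mathcal E^+_{\kappa,\mathbf f}(\mathbf A)$.

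Finally, combining the preceding display with $\kappa(\boldsymbol\mu,\boldsymbol\mu)=\|R\boldsymbol\mu\|_\kappa^2$ (see~(\ref{sem-norm})) and expanding the square in $\mathcal E_\kappa(X)$,
\[G_{\kappa,\mathbf f}(\boldsymbol\mu)=\|R\boldsymbol\mu\|_\kappa^2+2\kappa(R\boldsymbol\mu,\zeta)=\|R\boldsymbol\mu+\zeta\|_\kappa^2-\|\zeta\|_\kappa^2,\]
which is~(\ref{yusss}). The only mildly delicate point is the application of Lemma~\ref{integral}, which requires $|R\boldsymbol\mu|$-integrability of $\kappa(\cdot,\zeta^\pm)$; all the real work sits in the observation that $|R\boldsymbol\mu|$ and $\zeta^\pm$ lie in $\mathcal E^+_\kappa(X)$ so that Cauchy--Schwarz closes the loop.
\end{pf}
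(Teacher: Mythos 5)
Your proof is correct and follows essentially the same route as the paper: apply Lemma~\ref{integral} to $\kappa(\cdot,\zeta^+)$ and $\kappa(\cdot,\zeta^-)$, subtract to get $\langle\mathbf f,\boldsymbol\mu\rangle=\kappa(R\boldsymbol\mu,\zeta)$, and substitute into (\ref{wen}) using (\ref{sem-norm}). The only difference is that you explicitly verify the $|R\boldsymbol\mu|$-integrability hypothesis of Lemma~\ref{integral} via Cauchy--Schwarz, a step the paper leaves implicit.
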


\begin{proof}Applying Lemma~\ref{integral} to $\boldsymbol\mu\in\mathcal
E^+_\kappa(\mathbf A)$ and each of
$\kappa(\cdot,\zeta^+),\kappa(\cdot,\zeta^-)\in\Psi(X)$, we get by subtraction
\[\langle\mathbf f,\boldsymbol\mu\rangle=\sum_{i\in
I}\,s_i\int\kappa(x,\zeta)\,d\mu^i(x)=\kappa(\zeta,R\boldsymbol\mu).\]
Substituting this together with (\ref{sem-norm}) into (\ref{wen}), we arrive at~(\ref{yusss}).\end{proof}

\begin{lemma}\label{lemma:minusfinite} In either Case\/~{\rm I} or Case\/~{\rm
II},\footnote{As seen from (\ref{Gconfin}), (\ref{in1}), and (\ref{-infty}), $G_{\kappa,\mathbf f}(\mathbf
A,\mathbf a,\mathbf g)$ and $G^{\boldsymbol\sigma}_{\kappa,\mathbf
f}(\mathbf A,\mathbf a,\mathbf g)$ are both finite.\label{foot:finiteness}}
\begin{equation}\label{-infty}G^{\boldsymbol\sigma}_{\kappa,\mathbf f}(\mathbf
A,\mathbf a,\mathbf g)\geqslant G_{\kappa,\mathbf f}(\mathbf A,\mathbf a,\mathbf
g)>-\infty.\end{equation}
\end{lemma}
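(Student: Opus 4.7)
The first inequality in (\ref{-infty}) is immediate: indeed, since any $\boldsymbol\mu\in\mathcal E^{\boldsymbol\sigma}_{\kappa,\mathbf f}(\mathbf A,\mathbf a,\mathbf g)$ also lies in $\mathcal E^+_{\kappa,\mathbf f}(\mathbf A,\mathbf a,\mathbf g)$, taking the infimum over the smaller class yields a larger value, which is precisely (\ref{in1}). The entire content of the lemma is therefore the lower bound $G_{\kappa,\mathbf f}(\mathbf A,\mathbf a,\mathbf g)>-\infty$, and my plan is to exhibit a finite constant $C$, depending only on the data, such that $G_{\kappa,\mathbf f}(\boldsymbol\mu)\geqslant C$ for every $\boldsymbol\mu\in\mathcal E^+_{\kappa,\mathbf f}(\mathbf A,\mathbf a,\mathbf g)$.

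Case II is the easiest and serves as a warm-up. By Lemma~\ref{caseii}, $G_{\kappa,\mathbf f}(\boldsymbol\mu)=\|R\boldsymbol\mu+\zeta\|_\kappa^2-\|\zeta\|_\kappa^2\geqslant-\|\zeta\|_\kappa^2$; since $\zeta\in\mathcal E_\kappa(X)$, the right-hand side is a finite constant independent of $\boldsymbol\mu$, so taking $C:=-\|\zeta\|_\kappa^2$ does the job.

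For Case I I would split according to compactness of $X$. Thanks to (\ref{pos-en}) we have $\kappa(\boldsymbol\mu,\boldsymbol\mu)\geqslant0$, so it suffices to bound $\langle\mathbf f,\boldsymbol\mu\rangle$ from below uniformly in $\boldsymbol\mu\in\mathcal E^+_{\kappa,\mathbf f}(\mathbf A,\mathbf a,\mathbf g)$. If $X$ is noncompact, then by definition of $\Psi(X)$ each $f_i$ is nonnegative, hence $\langle f_i,\mu^i\rangle\geqslant0$ for every $i\in I$; thus $\langle\mathbf f,\boldsymbol\mu\rangle\geqslant0$ and one may take $C:=0$. If instead $X$ is compact, the local finiteness of $\mathbf A$ (applied with the compact set $X$ itself) forces $I$ to be finite; moreover each $f_i$, being l.s.c.\ on the compact space $X$, attains its infimum and is bounded below by some finite constant $-M_i$. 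Since continuous positive $g_i$ on compact $X$ automatically satisfies $g_{i,\inf}>0$, the constraint $\langle g_i,\mu^i\rangle=a_i$ yields $\mu^i(X)\leqslant a_i g_{i,\inf}^{-1}<\infty$ (compare (\ref{gmasses})), so $\langle f_i,\mu^i\rangle\geqslant-M_i a_i g_{i,\inf}^{-1}$; summing these finitely many estimates produces the desired uniform constant $C$.

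There is no real obstacle here: the only point deserving care is the compact sub-case of Case~I, where one must simultaneously invoke local finiteness of $\mathbf A$ to reduce to a finite sum, lower-semicontinuity of the $f_i$ on a compact space to bound them below, and the positivity of $g_{i,\inf}$ together with the normalization $\langle g_i,\mu^i\rangle=a_i$ to bound the total masses $\mu^i(X)$. Once these three ingredients are combined with (\ref{pos-en}), the bound $G_{\kappa,\mathbf f}(\boldsymbol\mu)\geqslant C$ holds uniformly, and the lemma (together with footnote~\ref{foot:finiteness}, which is a direct consequence of (\ref{Gconfin}), (\ref{in1}) and (\ref{-infty})) is established.
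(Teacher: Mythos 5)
Your proposal is correct and follows essentially the same route as the paper: Case~II via the identity (\ref{yusss}) of Lemma~\ref{caseii}, and Case~I by splitting on compactness of $X$, using nonnegativity of the $f_i$ together with (\ref{pos-en}) when $X$ is noncompact, and otherwise combining finiteness of $I$, the lower bound on the l.s.c.\ $f_i$, and the mass bound (\ref{gmasses}) derived from $\langle g_i,\mu^i\rangle=a_i$ and $g_{i,\inf}>0$. No gaps.
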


\begin{proof} Since in Case~II relation (\ref{-infty}) follows directly
from (\ref{yusss}), it is left to consider Case~I. Assume $X$ is compact, for if not, then $f_i\geqslant0$ for all $i\in I$, and
(\ref{-infty}) holds by (\ref{pos-en}). But then $I$ has
to be finite, while every $f_i$, being l.s.c., is bounded from below on
the (compact) space $X$ by $-c_i$, where $0<c_i<\infty$. In
addition, (\ref{infg}) and, hence, (\ref{gmasses}) with $c=a_i$ both hold for every
$i\in I$ and every $\boldsymbol\mu\in\mathfrak M^+(\mathbf A,\mathbf
a,\mathbf g)$, $g_i$ being a strictly positive continuous
function on $X$. Combining all these together gives
\[-\infty<-c_ia_ig_{i,\inf}^{-1}\leqslant-c_i\sup_{\boldsymbol\mu\in\mathfrak M^+(\mathbf A,\mathbf
a,\mathbf g)}\,\mu^i(X)\leqslant\langle f_i,\mu^i\rangle,\] which in
view of the finiteness of $I$ again leads to~(\ref{-infty}).
\end{proof}

\begin{lemma}\label{l:necess}In view of the\/ {\rm(}permanent\/{\rm)} requirement\/
{\rm(\ref{Gconfin})}, for all\/ $i\in I$
\begin{equation}\label{nec}c_\kappa(A^\circ_i)>0,\text{ \ where\ }
A^\circ_i:=\{x\in A_i: \ |f_i(x)|<\infty\}.
\end{equation}
\end{lemma}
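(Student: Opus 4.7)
The requirement (\ref{Gconfin}) guarantees that $\mathcal E^{\boldsymbol\sigma}_{\kappa,\mathbf f}(\mathbf A,\mathbf a,\mathbf g)$ is nonempty, so I would begin by selecting some $\boldsymbol\mu=(\mu^i)_{i\in I}$ in this class. Fixing $i\in I$, the proof reduces to extracting a nonzero element of $\mathcal E^+_\kappa(A^\circ_i)$ and then invoking Lemma~\ref{cap0} to conclude $c_\kappa(A^\circ_i)>0$. The only candidate is $\mu^i$ itself, which is already in $\mathcal E^+_\kappa(A_i)$; the point is to verify that it is actually carried by the smaller set $A^\circ_i$.

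To that end I would first observe that $\mu^i$ is nonzero: since $g_i$ is strictly positive continuous and $\langle g_i,\mu^i\rangle=a_i>0$, the measure $\mu^i$ cannot vanish. Next, since $\boldsymbol\mu\in\mathcal E^+_{\kappa,\mathbf f}(\mathbf A)$, the function $f_i$ is $\mu^i$-integrable, so $|f_i|<\infty$ $\mu^i$-a.e., which gives $\mu^i(A_i\setminus A^\circ_i)=0$. Universal measurability of $f_i$ makes $A^\circ_i$ a $\mu^i$-measurable subset of $A_i$.

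The one technical check is to upgrade $\mu^i(A_i\setminus A^\circ_i)=0$ to local $\mu^i$-negligibility of $A_i\setminus A^\circ_i$, which is what is required for $\mu^i$ to be \emph{carried} by $A^\circ_i$ in the sense of Section~\ref{sec:prel1}. This is where the permanent alternative of Section~\ref{sec:perm} is used: either $X$ is countable at infinity, or (\ref{infg}) holds and then (\ref{gmasses}) shows that $\mu^i$ is bounded. In either case, $A_i\setminus A^\circ_i$ is $\mu^i$-$\sigma$-finite, so a $\mu^i$-measurable subset of $\mu^i$-outer measure zero is indeed locally $\mu^i$-negligible. Combined with the local $\mu^i$-negligibility of $A_i^c$, this yields that $(A^\circ_i)^c$ is locally $\mu^i$-negligible, hence $\mu^i\in\mathfrak M^+(A^\circ_i)$, and therefore $\mu^i\in\mathcal E^+_\kappa(A^\circ_i)$.

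Since $\mu^i$ is a nonzero element of $\mathcal E^+_\kappa(A^\circ_i)$, Lemma~\ref{cap0} gives $c_\kappa(A^\circ_i)>0$, as required. The only ``obstacle'' is the mild bookkeeping in the previous paragraph distinguishing $\mu^i$-negligibility from local $\mu^i$-negligibility; the rest is immediate from the definitions of $\mathcal E^+_{\kappa,\mathbf f}(\mathbf A)$ and $\mathfrak M^+(\mathbf A,\mathbf a,\mathbf g)$ and from Lemma~\ref{cap0}.
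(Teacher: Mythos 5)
Your proof is correct and is essentially the paper's argument read directly rather than by contradiction: the paper assumes $c_\kappa(A^\circ_j)=0$, invokes Lemma~\ref{a.e.} to get $|f_j|=\infty$ $\mu^j$-a.e., and contradicts the $\mu^j$-integrability of $f_j$ together with $\mu^j\ne0$; you instead use integrability to show the nonzero $\mu^i\in\mathcal E^+_\kappa(A_i)$ is in fact carried by $A^\circ_i$ and then apply Lemma~\ref{cap0}. Both routes rest on the same fact that a set of zero inner capacity cannot carry a nonzero finite-energy measure, and your bookkeeping on (local) negligibility is sound (indeed, $\mu^i$-integrability of $f_i$ already makes $\{|f_i|=\infty\}$ $\mu^i$-negligible, hence locally $\mu^i$-negligible, so that step is even easier than you suggest).
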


\begin{proof}Let, on the contrary, $c_\kappa(A^\circ_j)=0$ for some $j\in I$. Then, by Lemma~\ref{a.e.},
for every $\boldsymbol\mu\in\mathcal E^+_{\kappa,\mathbf f}(\mathbf
A,\mathbf a,\mathbf g)$ (which exists by  (\ref{in1})) we have $|f_j|=\infty$
$\mu^j$-a.e. But this is impossible because $\mu^j\ne0$
while $f_j$ is $\mu^j$-int\-egr\-able.
\end{proof}

For any $M\in(0,\infty)$ and $i\in I$, write $A_i^M:=\{x\in A^\circ_i: \ |f_i(x)|\leqslant M\}$.

\begin{lemma}\label{l:gfinite}Assume there exist\/ $M,M_1\in(0,\infty)$ that are
independent of\/ $i\in I$ and possessing the properties
\begin{align}&\sum_{i\in I_0}\,\|\xi^i|_{A_i^M}\|_\kappa<\infty,\label{ser1}\\
&\langle g_i,\xi^i|_{A_i^M}\rangle\in(a_i,\infty)\text{ \ for all\ }i\in I_0,\label{Mg}\\
&\inf_{i\in I\setminus
I_0}\,c_\kappa(A_i^{M_1})=:M_3\in(0,\infty].\label{Minf}\end{align}
If, moreover, {\rm(\ref{ser2})} is fulfilled, then\/ {\rm(\ref{Gconfin})} holds.
\end{lemma}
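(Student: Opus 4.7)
The plan is to construct an explicit admissible measure $\boldsymbol\mu=(\mu^i)_{i\in I}$ in $\mathcal E^{\boldsymbol\sigma}_{\kappa,\mathbf f}(\mathbf A,\mathbf a,\mathbf g)$; nonemptiness of this class immediately yields (\ref{Gconfin}), since by its very definition every element has finite weighted energy $G_{\kappa,\mathbf f}(\boldsymbol\mu)$. I will build the components of $\boldsymbol\mu$ separately on the two index ranges $I_0$ and $I\setminus I_0$, and then glue them together via the summability hypotheses. Note throughout that $g_{i,\inf}^{-1}<\infty$ for every $i\in I$ by (\ref{ser2}).

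On $I_0$, I would set $\mu^i:=\theta_i\xi^i|_{A_i^M}$ with $\theta_i:=a_i/\langle g_i,\xi^i|_{A_i^M}\rangle$, which lies in $(0,1)$ by (\ref{Mg}). This forces $\mu^i\leqslant\xi^i$ and $\langle g_i,\mu^i\rangle=a_i$; hypothesis (\ref{ser1}) places $\mu^i$ in $\mathcal E^+_\kappa(A_i)$ with $\|\mu^i\|_\kappa\leqslant\|\xi^i|_{A_i^M}\|_\kappa$, so $\sum_{i\in I_0}\|\mu^i\|_\kappa<\infty$; and since $\mu^i$ is carried by $A_i^M$, one has $|\langle f_i,\mu^i\rangle|\leqslant M\mu^i(X)\leqslant Ma_ig_{i,\inf}^{-1}$, the last inequality following from $\int g_i\,d\mu^i=a_i$ and $g_i\geqslant g_{i,\inf}$.

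On $I\setminus I_0$, the construction exploits (\ref{Minf}). By inner regularity of $c_\kappa$ (the standard identification of $c_\kappa(Q)$ with $\sup\{c_\kappa(K): K\subset Q\text{ compact}\}$), there is a compact $K_i\subset A_i^{M_1}$ with $c_\kappa(K_i)\geqslant M_3/2>0$. The definition (\ref{cap-def}) then furnishes a measure $\nu_i\in\mathcal E^+_\kappa(K_i)$ with $\nu_i(K_i)=1$ and $\|\nu_i\|_\kappa^2\leqslant 4/M_3$. Compactness of $K_i$ together with continuity of $g_i$ makes $\langle g_i,\nu_i\rangle$ a finite number bounded below by $g_{i,\inf}$; rescaling to $\mu^i:=(a_i/\langle g_i,\nu_i\rangle)\nu_i$ gives $\langle g_i,\mu^i\rangle=a_i$, $\mu^i(X)\leqslant a_ig_{i,\inf}^{-1}$, $\|\mu^i\|_\kappa\leqslant 2a_ig_{i,\inf}^{-1}/\sqrt{M_3}$, and, since $\mu^i$ is carried by $K_i\subset A_i^{M_1}$, $|\langle f_i,\mu^i\rangle|\leqslant M_1 a_ig_{i,\inf}^{-1}$. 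Summation against (\ref{ser2}) then yields $\sum_{i\in I\setminus I_0}\|\mu^i\|_\kappa\leqslant 2C/\sqrt{M_3}<\infty$.

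Combining the two cases, $\sum_{i\in I}\|\mu^i\|_\kappa<\infty$, so Lemma~\ref{enfinite} places $\boldsymbol\mu$ in $\mathcal E^+_\kappa(\mathbf A)$; the uniform estimate $|\langle f_i,\mu^i\rangle|\leqslant\max(M,M_1)\,a_ig_{i,\inf}^{-1}$ and (\ref{ser2}) then give absolute convergence of $\sum_{i\in I}\langle f_i,\mu^i\rangle$, hence $\boldsymbol\mu\in\mathcal E^+_{\kappa,\mathbf f}(\mathbf A)$. Since the constraints $\mu^i\leqslant\xi^i$ for $i\in I_0$ and $\langle g_i,\mu^i\rangle=a_i$ for all $i$ have already been verified, $\boldsymbol\mu$ lies in $\mathcal E^{\boldsymbol\sigma}_{\kappa,\mathbf f}(\mathbf A,\mathbf a,\mathbf g)$, completing the proof. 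The main subtlety I anticipate is producing, for $i\in I\setminus I_0$, a finite-energy measure on $A_i^{M_1}$ of finite $g_i$-integral: because the Borel set $A_i^{M_1}$ need not lie in any region where $g_i$ is bounded, the passage to a compact subset (with a quantitative lower bound on its capacity via inner regularity) is both essential and the only nontrivial topological step in the argument.
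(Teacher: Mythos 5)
Your construction is essentially the paper's own proof: on $I_0$ you normalize $\xi^i|_{A_i^M}$ exactly as the paper does, and on $I\setminus I_0$ you use the fact that the inner capacity in (\ref{cap-def}) is computed via compactly supported measures to produce unit measures on $A_i^{M_1}$ with uniformly bounded energy, then rescale by $\langle g_i,\cdot\rangle$ and sum everything against (\ref{ser2}); the final appeal to Lemma~\ref{enfinite} and the bound $|\langle f_i,\mu^i\rangle|\leqslant\max(M,M_1)a_ig_{i,\inf}^{-1}$ are the same as in the paper. One small slip: hypothesis (\ref{Minf}) allows $M_3=\infty$, in which case you cannot choose a compact $K_i$ with $c_\kappa(K_i)\geqslant M_3/2$ (for, e.g., the Newtonian kernel every compact set has finite capacity), nor a unit measure with $\|\nu_i\|_\kappa^2\leqslant 4/M_3=0$. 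The paper avoids this by taking $\|\tau_i\|_\kappa^2\leqslant c_\kappa(A_i^{M_1})^{-1}+\varepsilon\leqslant M_3^{-1}+\varepsilon$; in your version it suffices to replace $M_3$ by $\min(M_3,1)$ throughout, so this is a one-line repair rather than a genuine gap.
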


\begin{proof}Fix $\varepsilon\in(0,\infty)$, and for every $i\in I\setminus I_0$ choose $\tau_i\in\mathcal E^+_\kappa(A_i^{M_1})$
of compact support so that $\tau_i(A_i^{M_1})=1$ and $\|\tau_i\|^2_\kappa\leqslant
c_\kappa(A_i^{M_1})^{-1}+\varepsilon$. (Such $\tau_i$
exists since $c_\kappa(A_i^{M_1})$ would be the same if the measures $\nu$ in its definition were required to be of compact support $S(\nu)\subset A_i^{M_1}$, cf.\ \cite[p.~153]{F1}.) In view of (\ref{Minf}), we thus get
\[\|\tau_i\|^2_\kappa\leqslant\varepsilon+M_3^{-1}=:M_4^2\in(0,\infty).\]  Write
\[\widetilde{\nu}_i:=\frac{a_i\nu_i}{\langle
g_i,\nu_i\rangle}\text{ \ for all\ }i\in I,\]
where
\begin{equation*}\nu^i:=\left\{
\begin{array}{lll} \tau_i &  \text{if}  & i\in I\setminus I_0,\\
\xi^i|_{A_i^M} &  \text{if} & i\in I_0.\\
 \end{array} \right.
\end{equation*}
Note that $0<\langle
g_i,\nu_i\rangle<\infty$ for all $i\in I$. In fact, for $i\in I\setminus I_0$ this holds because
\[0<\min_{x\in S(\tau^i)}\,g_i(x)\leqslant\langle
g_i,\nu_i\rangle\leqslant\max_{x\in S(\tau^i)}\,g_i(x)<\infty,\] while for $i\in I_0$ it is valid by (\ref{Mg}).
Also observing that, again by (\ref{Mg}), $\widetilde{\nu}_i\leqslant\xi^i$  for all $i\in I_0$,
we thus get $\widetilde{\nu}_i\in\mathfrak
M^{\sigma^i}(A_i,a_i,g_i)$ for all $i\in I$. Furthermore,
\begin{align*}\sum_{i\in
I}\,\|\widetilde{\nu}_i\|_\kappa&\leqslant\sum_{i\in
I_0}\,\frac{a_i}{\langle
g_i,\xi^i|_{A_i^M}\rangle}\|\xi^i|_{A_i^M}\|_\kappa+M_4\sum_{i\in
I\setminus I_0}\,a_ig_{i,\inf}^{-1}\\
{}&\leqslant\sum_{i\in
I_0}\,\|\xi^i|_{A_i^M}\|_\kappa+M_4\sum_{i\in
I\setminus I_0}\,a_ig_{i,\inf}^{-1}<\infty,\end{align*} where the
second inequality follows from (\ref{Mg}) and the third from
(\ref{ser1}) and (\ref{ser2}). Therefore, by Lemma~\ref{enfinite}, $\boldsymbol{\widetilde{\nu}}:=(\widetilde{\nu}_i)_{i\in
I}\in\mathcal E^{\boldsymbol\sigma}_\kappa(\mathbf A,\mathbf
a,\mathbf g)$. Finally,
\[\sum_{i\in I}\,|\langle f_i,\widetilde{\nu}_i\rangle|\leqslant(M+M_1)\sum_{i\in I}\,
\frac{a_i\nu^i(X)}{\langle
g_i,\nu_i\rangle}\leqslant(M+M_1)\sum_{i\in
I}\,a_ig_{i,\inf}^{-1}<\infty,\] the last inequality being obtained
from (\ref{ser2}). Altogether,
$\boldsymbol{\widetilde{\nu}}\in\mathcal
E^{\boldsymbol\sigma}_{\kappa,\mathbf f}(\mathbf A,\mathbf a,\mathbf
g)$.\end{proof}

If $I$ is finite, Lemma~\ref{l:gfinite} takes the following
much simpler form.

\begin{corollary}\label{cor:gfinite}Let\/ $I$ be finite, and let\/ $c_\kappa(A^\circ_i)>0$ for
all\/ $i\in I\setminus I_0$, $A^\circ_i$ being defined in\/ {\rm(\ref{nec})}. Then\/ {\rm(\ref{Gconfin})} holds
provided that for every\/ $i\in I_0$, \[\langle
g_i,\xi^i|_{A^\circ_i}\rangle>a_i\text{ \ and \ }
\xi^i|_{K_i}\in\mathcal E^+_\kappa(K_i)\text{ \ for every compact\ }
K_i\subset A^\circ_i.\]
\end{corollary}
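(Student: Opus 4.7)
The plan is to construct directly an admissible vector measure $\boldsymbol{\widetilde{\nu}}\in\mathcal E^{\boldsymbol\sigma}_{\kappa,\mathbf f}(\mathbf A,\mathbf a,\mathbf g)$, component by component, mimicking the construction in the proof of Lemma~\ref{l:gfinite} but with compact sets $K_i\subset A_i^\circ$ in place of the (possibly unbounded) Borel sets $A_i^M$. Because $I$ is finite, there is no summability issue: the components can be chosen independently, the cross-terms in the energy are finite by Cauchy--Schwarz applied finitely many times, and $\langle\mathbf f,\boldsymbol{\widetilde\nu}\rangle$ is a finite sum of finite terms.

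For each $i\in I_0$, starting from $\langle g_i,\xi^i|_{A_i^\circ}\rangle>a_i$, I would pass by monotone convergence along $A_i^M\uparrow A_i^\circ$ to some $M_i<\infty$ with $\langle g_i,\xi^i|_{A_i^{M_i}}\rangle>a_i$, and then apply inner regularity of the positive Radon measure $\xi^i|_{A_i^{M_i}}$ tested against the continuous $g_i$ to extract a compact $K_i\subset A_i^{M_i}\subset A_i^\circ$ with $\langle g_i,\xi^i|_{K_i}\rangle>a_i$ and $\xi^i(K_i)<\infty$. The hypothesis $\xi^i|_{K_i}\in\mathcal E^+_\kappa(K_i)$ is then available, and the normalization $\widetilde\nu_i:=(a_i/\langle g_i,\xi^i|_{K_i}\rangle)\,\xi^i|_{K_i}$ is a multiple of $\xi^i|_{K_i}$ by a factor strictly less than $1$; hence $\widetilde\nu_i\leqslant\xi^i$, $\widetilde\nu_i\in\mathfrak M^{\xi^i}(A_i,a_i,g_i)\cap\mathcal E^+_\kappa(A_i)$, and $\langle|f_i|,\widetilde\nu_i\rangle\leqslant M_i\widetilde\nu_i(X)<\infty$ because $|f_i|\leqslant M_i$ on $K_i$.

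For each $i\in I\setminus I_0$, I would instead exploit $c_\kappa(A_i^\circ)>0$: monotone convergence of inner capacity along $A_i^M\uparrow A_i^\circ$ yields $M_i$ with $c_\kappa(A_i^{M_i})>0$, and the characterization of inner capacity as a supremum over compact subsets produces a compact $K_i\subset A_i^{M_i}$ with $c_\kappa(K_i)>0$. Lemma~\ref{cap0} then furnishes a nonzero $\tau_i\in\mathcal E^+_\kappa(K_i)$, and since $g_i>0$ is continuous on the compact $K_i$, the normalization $\widetilde\nu_i:=(a_i/\langle g_i,\tau_i\rangle)\,\tau_i$ has the required mass, finite energy, and $\langle|f_i|,\widetilde\nu_i\rangle\leqslant M_i\widetilde\nu_i(X)<\infty$. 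Assembling the components, $\boldsymbol{\widetilde\nu}=(\widetilde\nu_i)_{i\in I}$ lies in $\mathcal E^{\boldsymbol\sigma}_{\kappa,\mathbf f}(\mathbf A,\mathbf a,\mathbf g)$ by Lemma~\ref{enfinite}, forcing~(\ref{Gconfin}). The main technical obstacle is the explicit invocation of the two inner-regularity facts used above---one for the Radon measure $\xi^i$ integrated against $g_i$, and monotonicity plus compact-subset approximation for $c_\kappa$ along an increasing family of Borel sets---both standard but worth citing from \cite{B2,F1}.
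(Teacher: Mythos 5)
Your construction is correct and is essentially the argument the paper has in mind: the proof of Corollary~\ref{cor:gfinite} is explicitly dropped with the remark that it runs as in Lemma~\ref{l:gfinite}, and your version carries out exactly that construction, with the necessary replacement of the sets $A_i^M$ by compact subsets of $A_i^\circ$ (so that the hypotheses $\xi^i|_{K_i}\in\mathcal E^+_\kappa(K_i)$ and $c_\kappa(A^\circ_i)>0$, via Lemma~\ref{cap0} and the compact-exhaustion characterization of inner capacity, can be invoked), finiteness of $I$ disposing of all summability conditions. The inner-regularity and monotone-convergence steps you flag are indeed the only technical points, and they are of the same standard kind as those used in the proofs of Lemma~\ref{l:gfinite} and Theorem~\ref{th:desc}.
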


We drop a proof of Corollary~\ref{cor:gfinite}, since it runs
in a way similar to that for Lemma~\ref{l:gfinite}. Combining this with Lemma~\ref{l:necess} yields the following
assertion.

\begin{corollary}\label{cor:gfinite1}If\/ $I$ is finite and\/ $I_0=\varnothing$, then\/
{\rm(\ref{Gconfin})} and\/ {\rm(\ref{nec})} are
equivalent.\end{corollary}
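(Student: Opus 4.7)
The plan is to observe that Corollary~\ref{cor:gfinite1} is an immediate synthesis of the two results immediately preceding it, once one specializes the already-available machinery to the case $I_0=\varnothing$ with $I$ finite. There is no real obstacle; the content of the corollary is essentially bookkeeping.

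For the forward direction (\ref{Gconfin})~$\Rightarrow$~(\ref{nec}), I would simply invoke Lemma~\ref{l:necess} verbatim: its proof uses neither the finiteness of $I$ nor any assumption on $I_0$, so it applies unchanged. For the reverse direction (\ref{nec})~$\Rightarrow$~(\ref{Gconfin}), I would appeal to Corollary~\ref{cor:gfinite}. When $I_0=\varnothing$, the two hypotheses in that corollary that quantify over $i\in I_0$ (namely $\langle g_i,\xi^i|_{A^\circ_i}\rangle>a_i$ and $\xi^i|_{K_i}\in\mathcal E^+_\kappa(K_i)$) are vacuous, while its remaining hypothesis reduces to: $c_\kappa(A^\circ_i)>0$ for every $i\in I\setminus I_0=I$, which is exactly (\ref{nec}). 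Hence Corollary~\ref{cor:gfinite} supplies an element of $\mathcal E^{\boldsymbol\sigma}_{\kappa,\mathbf f}(\mathbf A,\mathbf a,\mathbf g)=\mathcal E^+_{\kappa,\mathbf f}(\mathbf A,\mathbf a,\mathbf g)$, giving (\ref{Gconfin}).

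The only subtle point worth flagging in the write-up is that in Case~I the external fields $f_i$ are l.s.c.\ but can equal $+\infty$ on part of $A_i$, so (\ref{nec}) is genuinely needed to guarantee that some admissible measure avoids charging the polar set $\{f_i=+\infty\}$; conversely, once $c_\kappa(A^\circ_i)>0$, the construction in Lemma~\ref{l:gfinite} (simplified by finiteness of $I$ and emptiness of $I_0$) produces on each $A_i$ a compactly supported measure of finite energy sitting in $A_i^{M_1}$, then rescales by $g_i$ to meet the mass constraint $\langle g_i,\mu^i\rangle=a_i$. Because $I$ is finite, both the energy $\kappa(\boldsymbol\mu,\boldsymbol\mu)$ and the pairing $\langle\mathbf f,\boldsymbol\mu\rangle$ are automatically finite for this admissible $\boldsymbol\mu$, with no convergence-of-series hypothesis such as (\ref{ser2}) required. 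This is the reason the statement collapses to the clean equivalence displayed.
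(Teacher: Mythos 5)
Your proposal is correct and is essentially identical to the paper's own argument: the paper likewise obtains the necessity of (\ref{nec}) from Lemma~\ref{l:necess} and its sufficiency from Corollary~\ref{cor:gfinite}, whose $I_0$-dependent hypotheses become vacuous when $I_0=\varnothing$. The extra commentary on why the series conditions of Lemma~\ref{l:gfinite} drop out for finite $I$ is accurate but not needed once Corollary~\ref{cor:gfinite} is invoked.
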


\begin{definition}\label{def:minnet}A net $(\boldsymbol\mu_s)_{s\in S}\subset
\mathcal E^{\boldsymbol\sigma}_{\kappa,\mathbf f}(\mathbf A,\mathbf
a,\mathbf g)$ is \emph{minimizing\/} in Problem~\ref{pr2}
if
\begin{equation}\label{min-seq:}\lim_{s\in S}\,G_{\kappa,\mathbf f}({\boldsymbol\mu}_s)=
G_{\kappa,\mathbf f}^{\boldsymbol\sigma}(\mathbf A,\mathbf a,\mathbf
g).\end{equation} Let $\mathbb M^{\boldsymbol\sigma}_{\kappa,\mathbf
f}(\mathbf A,\mathbf a,\mathbf g)$ consist of all those
$(\boldsymbol\mu_s)_{s\in S}$; it is nonempty because
of~(\ref{Gconfin}).
\end{definition}

\begin{lemma}\label{l:fund}For any\/ $(\boldsymbol\mu_s)_{s\in S}$ and\/ $(\boldsymbol\nu_t)_{t\in T}$
in\/ $\mathbb M^{\boldsymbol{\sigma}}_{\kappa,\mathbf f}(\mathbf
A,\mathbf a,\mathbf g)$,
\begin{equation}
\lim_{(s,t)\in S\times
T}\,\|\boldsymbol\mu_s-\boldsymbol\nu_t\|_{\mathcal
E^+_\kappa(\mathbf A)}=0, \label{fund:}
\end{equation}
$S\times T$ being the upper directed product\,\footnote{See, e.g.,
\cite[Chapter~2, Section~3]{K}.} of the upper directed sets\/ $S$
and\/~$T$.
\end{lemma}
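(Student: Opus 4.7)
The plan is to mimic the argument of Lemma~\ref{l:unique}, now applied to the pair $(\boldsymbol\mu_s,\boldsymbol\nu_t)$ of elements of $\mathcal E^{\boldsymbol\sigma}_{\kappa,\mathbf f}(\mathbf A,\mathbf a,\mathbf g)$ and running both indices along the product directed set $S\times T$. The whole idea is to exploit the pre-Hilbert structure of $\mathcal E_\kappa(X)$ via the isometry $R:\mathcal E^+_\kappa(\mathbf A)\to\mathcal E_\kappa(X)$ established in Theorem~\ref{lemma:semimetric}, combined with the convexity of the admissible class and the two defining properties in Definition~\ref{def:minnet}.

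First I would check that the midpoint $(\boldsymbol\mu_s+\boldsymbol\nu_t)/2$ again belongs to $\mathcal E^{\boldsymbol\sigma}_{\kappa,\mathbf f}(\mathbf A,\mathbf a,\mathbf g)$. The linearity of $\langle g_i,\cdot\rangle$ and $\langle\mathbf f,\cdot\rangle$ preserves the conditions $\langle g_i,\mu^i\rangle=a_i$ and the finiteness of $\langle\mathbf f,\boldsymbol\mu\rangle$; the upper constraint $\mu^i\leqslant\xi^i$ ($i\in I_0$) is preserved by averaging; and membership in the convex cone $\mathcal E_\kappa^+(\mathbf A)$ follows from Remark~\ref{rem-convex}. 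Hence the average is admissible and, by (\ref{Gconfin}),
\begin{equation*}
G_{\kappa,\mathbf f}\bigl((\boldsymbol\mu_s+\boldsymbol\nu_t)/2\bigr)\geqslant G_{\kappa,\mathbf f}^{\boldsymbol\sigma}(\mathbf A,\mathbf a,\mathbf g).
\end{equation*}

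Next, using (\ref{sem-norm}), (\ref{wen}), $R$-lin\-ear\-ity, and the parallelogram identity for the seminorm $\|\cdot\|_\kappa$ applied to $R\boldsymbol\mu_s$ and $R\boldsymbol\nu_t$, a direct computation (identical to that in the proof of Lemma~\ref{l:unique}) yields the identity
\begin{equation*}
\|R\boldsymbol\mu_s-R\boldsymbol\nu_t\|_\kappa^2=2G_{\kappa,\mathbf f}(\boldsymbol\mu_s)+2G_{\kappa,\mathbf f}(\boldsymbol\nu_t)-4G_{\kappa,\mathbf f}\bigl((\boldsymbol\mu_s+\boldsymbol\nu_t)/2\bigr),
\end{equation*}
in which the cross terms $2\langle\mathbf f,\boldsymbol\mu_s+\boldsymbol\nu_t\rangle$ from the two sides cancel. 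Combining this with the preceding lower bound and with (\ref{seminorm}) gives
\begin{equation*}
0\leqslant\|\boldsymbol\mu_s-\boldsymbol\nu_t\|^2_{\mathcal E^+_\kappa(\mathbf A)}=\|R\boldsymbol\mu_s-R\boldsymbol\nu_t\|_\kappa^2\leqslant 2G_{\kappa,\mathbf f}(\boldsymbol\mu_s)+2G_{\kappa,\mathbf f}(\boldsymbol\nu_t)-4G_{\kappa,\mathbf f}^{\boldsymbol\sigma}(\mathbf A,\mathbf a,\mathbf g).
\end{equation*}

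Finally, I would pass to the limit along $S\times T$. By (\ref{min-seq:}) in Definition~\ref{def:minnet}, $G_{\kappa,\mathbf f}(\boldsymbol\mu_s)\to G_{\kappa,\mathbf f}^{\boldsymbol\sigma}(\mathbf A,\mathbf a,\mathbf g)$ along $S$ and $G_{\kappa,\mathbf f}(\boldsymbol\nu_t)\to G_{\kappa,\mathbf f}^{\boldsymbol\sigma}(\mathbf A,\mathbf a,\mathbf g)$ along $T$, so the right-hand side tends to $0$ along the product directed set $S\times T$, which yields (\ref{fund:}). The only point requiring any care is the finiteness of $G_{\kappa,\mathbf f}^{\boldsymbol\sigma}(\mathbf A,\mathbf a,\mathbf g)$ needed for the subtraction to be unambiguous, but this is guaranteed by footnote~\ref{foot:finiteness}; beyond that the argument is routine, the substantive content being entirely packaged into the isometry of Theorem~\ref{lemma:semimetric}.
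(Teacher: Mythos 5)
Your argument is correct and is essentially the paper's own proof: the same parallelogram-identity computation from Lemma~\ref{l:unique} applied to the midpoint $(\boldsymbol\mu_s+\boldsymbol\nu_t)/2$, bounded below via (\ref{Gconfin}), followed by passage to the limit along $S\times T$ using (\ref{min-seq:}), (\ref{seminorm}), and the finiteness of $G^{\boldsymbol\sigma}_{\kappa,\mathbf f}(\mathbf A,\mathbf a,\mathbf g)$ from (\ref{-infty}). No gaps.
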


\begin{proof} In the same manner as in the proof of Lemma~\ref{l:unique} we get
\begin{equation*}0\leqslant\|R\boldsymbol\mu_s-R\boldsymbol\nu_t\|^2_\kappa\leqslant-
4G^{\boldsymbol\sigma}_{\kappa,\mathbf f}(\mathbf A,\mathbf
a,\mathbf g)+ 2G_{\kappa,\mathbf
f}(\boldsymbol\mu_s)+2G_{\kappa,\mathbf
f}(\boldsymbol\nu_t),\end{equation*} which yields (\ref{fund:}) when
combined with (\ref{seminorm}), (\ref{Gconfin}), (\ref{-infty}), and (\ref{min-seq:}).\end{proof}

\begin{corollary}\label{cor:fund}Every\/ $(\boldsymbol\mu_s)_{s\in S}\in
\mathbb M^{\boldsymbol\sigma}_{\kappa,\mathbf f}(\mathbf A,\mathbf
a,\mathbf g)$ is strong Cauchy in\/ $\mathcal E^+_\kappa(\mathbf
A)$.\end{corollary}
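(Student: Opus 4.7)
The proof of Corollary~\ref{cor:fund} is essentially immediate from Lemma~\ref{l:fund}: the plan is just to apply the lemma with the second net equal to the first. More precisely, given any $(\boldsymbol\mu_s)_{s\in S}\in \mathbb M^{\boldsymbol\sigma}_{\kappa,\mathbf f}(\mathbf A,\mathbf a,\mathbf g)$, I would take $(\boldsymbol\nu_t)_{t\in T}:=(\boldsymbol\mu_t)_{t\in S}$ (so $T=S$ and the directed product $S\times T = S\times S$), which is trivially also in $\mathbb M^{\boldsymbol\sigma}_{\kappa,\mathbf f}(\mathbf A,\mathbf a,\mathbf g)$ since it is the same net.

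Then Lemma~\ref{l:fund} gives
\[\lim_{(s,t)\in S\times S}\,\|\boldsymbol\mu_s-\boldsymbol\mu_t\|_{\mathcal E^+_\kappa(\mathbf A)}=0,\]
which is exactly the Cauchy condition in the semimetric space $\mathcal E^+_\kappa(\mathbf A)$ (whose topology, via the isometry of Theorem~\ref{lemma:semimetric}, coincides with the strong topology induced from the pre-Hilbert space $\mathcal E_\kappa(X)$ under the mapping $R$). Hence $(\boldsymbol\mu_s)_{s\in S}$ is strong Cauchy, as required.

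There is no real obstacle here: the work has already been done in Lemma~\ref{l:fund}, whose proof combined the parallelogram identity in $\mathcal E_\kappa(X)$ with the convexity of $\mathcal E^{\boldsymbol\sigma}_{\kappa,\mathbf f}(\mathbf A,\mathbf a,\mathbf g)$ and the defining property (\ref{min-seq:}) of a minimizing net. The only thing worth noting for clarity is that taking two copies of the same minimizing net is permissible because $S\times S$ is again upper directed, and the estimate in the proof of Lemma~\ref{l:fund},
\[0\leqslant\|R\boldsymbol\mu_s-R\boldsymbol\mu_t\|^2_\kappa\leqslant-4G^{\boldsymbol\sigma}_{\kappa,\mathbf f}(\mathbf A,\mathbf a,\mathbf g)+2G_{\kappa,\mathbf f}(\boldsymbol\mu_s)+2G_{\kappa,\mathbf f}(\boldsymbol\mu_t),\]
together with (\ref{seminorm}) and (\ref{min-seq:}), furnishes the conclusion directly.
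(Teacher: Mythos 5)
Your proof is correct and is exactly the argument the paper intends: the corollary is stated without proof as an immediate consequence of Lemma~\ref{l:fund}, obtained by taking the two minimizing nets there to be the same net, so that the limit over $S\times S$ is precisely the strong Cauchy condition. Your remark that $S\times S$ is again upper directed is the only point needing verification, and it is handled correctly.
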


\section{Sufficient conditions for the solvability of
Problem~\ref{pr2}}\label{sec:suff}

Throughout Section~\ref{sec:suff} we require the permanent assumptions stated in
Sections~\ref{sec:form} and~\ref{sec:perm}. Furthermore, the $A_i$, $i\in I$, are assumed to be
nearly closed. According to Lemma~\ref{l:n-cl}, then so are both
$A^+$ and $A^-$. Let $\breve{A}^+$ and $\breve{A}^-$ be the (closed) sets defined by~(\ref{brevea}).
We denote by $(\boldsymbol\mu_s)'_{s\in S}$ the cluster set
of any $(\boldsymbol\mu_s)_{s\in S}\subset\mathfrak
M^+(\mathbf A)$ in the vague product space topology on $\mathfrak
M^+(X)^{\mathrm{Card}\,I}$, and
$\mathfrak S^{\boldsymbol\sigma}_{\kappa,\mathbf f}(\mathbf
A,\mathbf a,\mathbf g)$ the class of the solutions to Problem~\ref{pr2}.

\begin{theorem}\label{th1}Let the kernel\/ $\kappa$ be consistent, and let the assumptions
\begin{equation}\label{bound}\sup_{(x,y)\in\breve{A}^+\times\breve{A}^-}\,\kappa(x,y)<\infty\end{equation}
and\/ {\rm(\ref{ser2})} be both fulfilled. Also assume that
\begin{equation}\label{boundd}\langle
g_i,\xi^i\rangle<\infty\text{ \ for all $i\in I_0$},\end{equation}
while for every\/ $i\in
I\setminus I_0$,  the following two conditions are required:
\begin{itemize}\item[$\bullet$] Either\/ $A_i$ is nearly compact, or\/ $c_\kappa(A_i)<\infty$.\footnote{A compact set
$K\subset X$ may be of infinite capacity; $c_\kappa(K)$ is
necessarily finite provided that $\kappa$ is strictly positive
definite \cite{F1}. On the other hand, even for the Newtonian kernel,
closed sets of finite capacity may be noncompact (see, e.g., Example~\ref{ex-thin} above).}
\item[$\bullet$]Either\/ $g_i$ is upper bounded, or there are\/
$r_i\in(1,\infty)$ and\/ $\nu_i\in\mathcal E_\kappa(X)$ such that
\begin{equation}
g_i^{r_i}(x)\leqslant\kappa(x,\nu_i)\text{ \ n.e.~on\ } A_i.
\label{growth}
\end{equation}
\end{itemize}
Then in either Case\/~{\rm I} or Case\/~{\rm II}, $\mathfrak S^{\boldsymbol\sigma}_{\kappa,\mathbf f}(\mathbf
A,\mathbf a,\mathbf g)$ is nonempty, vaguely compact, and given by
\begin{equation}\label{desc-solv}\mathfrak S^{\boldsymbol\sigma}_{\kappa,\mathbf f}(\mathbf
A,\mathbf a,\mathbf g)=\bigcup_{(\boldsymbol\nu_t)_{t\in
T}\in\mathbb M^{\boldsymbol\sigma}_{\kappa,\mathbf f}(\mathbf
A,\mathbf a,\mathbf g)}\,(\boldsymbol\nu_t)_{t\in T}'.\end{equation}
Furthermore, for every\/ $(\boldsymbol\nu_t)_{t\in T}\in\mathbb
M^{\boldsymbol\sigma}_{\kappa,\mathbf f}(\mathbf A,\mathbf a,\mathbf
g)$ and every\/ $\boldsymbol\lambda^{\boldsymbol\sigma}_{\mathbf
A}\in\mathfrak S^{\boldsymbol\sigma}_{\kappa,\mathbf f}(\mathbf
A,\mathbf a,\mathbf g)$,
\begin{equation}\label{min-conv-str}\lim_{t\in
T}\,\|\boldsymbol\nu_t-\boldsymbol\lambda^{\boldsymbol\sigma}_{\mathbf
A}\|_{\mathcal E_\kappa^+(\mathbf A)}=0.\end{equation}
\end{theorem}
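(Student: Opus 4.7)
The plan is to combine vague precompactness of minimizing nets with the strong completeness result of Theorem~\ref{th:str} and the Cauchy property of Corollary~\ref{cor:fund}. First I would reduce everything to the standard condenser $\breve{\mathbf A}=(\breve A_i)_{i\in I}$: by Lemma~\ref{l:n-cl} and Lemma~\ref{l:cl-q} one has $\mathcal E^+_\kappa(A_i)=\mathcal E^+_\kappa(\breve A_i)$ for every $i\in I$, and the hypotheses (\ref{bound}) and (\ref{ser2}) make Theorem~\ref{th:str} immediately applicable to $\breve{\mathbf A}$. Fix $(\boldsymbol\nu_t)_{t\in T}\in\mathbb M^{\boldsymbol\sigma}_{\kappa,\mathbf f}(\mathbf A,\mathbf a,\mathbf g)$. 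Since $g_{i,\inf}>0$ (a consequence of (\ref{ser2})) and $\langle g_i,\nu^i_t\rangle=a_i$, one has $\nu^i_t(X)\leqslant a_i g_{i,\inf}^{-1}$, so $(\boldsymbol\nu_t)$ is vaguely bounded and, by Lemma~\ref{lem:vaguecomp}, admits a nonempty vague cluster set $(\boldsymbol\nu_t)'_{t\in T}$. Pick $\boldsymbol\lambda$ there; each $\lambda^i\in\mathfrak M^+(\breve A_i)$ by Lemma~\ref{l:v:cl}. Corollary~\ref{cor:fund} shows $(\boldsymbol\nu_t)$ is strong Cauchy, so Theorem~\ref{th:str} yields strong convergence $\boldsymbol\nu_t\to\boldsymbol\lambda$ and places $\boldsymbol\lambda$ in $\mathcal E_\kappa^+(\breve{\mathbf A},\leqslant\!\mathbf a,\mathbf g)=\mathcal E_\kappa^+(\mathbf A,\leqslant\!\mathbf a,\mathbf g)$.

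The main obstacle is verifying that $\boldsymbol\lambda$ is actually admissible for Problem~\ref{pr2}, i.e., that $\langle g_i,\lambda^i\rangle=a_i$ for all $i\in I$ and $\lambda^i\leqslant\xi^i$ for all $i\in I_0$. The inequality $\langle g_i,\lambda^i\rangle\leqslant a_i$ is immediate from Lemma~\ref{lemma:lower}; the reverse inequality must exclude escape of $g_i$-mass to infinity. For $i\in I_0$, applying Lemma~\ref{lemma:lower} to the continuous positive $g_i$ and the vague limit $\xi^i-\lambda^i\geqslant 0$ of the positive net $\xi^i-\nu^i_t$, together with (\ref{boundd}), gives
\[\langle g_i,\xi^i\rangle-\langle g_i,\lambda^i\rangle\leqslant\liminf_{t\in T}\,\bigl(\langle g_i,\xi^i\rangle-\langle g_i,\nu^i_t\rangle\bigr)=\langle g_i,\xi^i\rangle-a_i,\]
so $\langle g_i,\lambda^i\rangle=a_i$, while simultaneously $\lambda^i\leqslant\xi^i$. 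For $i\in I\setminus I_0$ each of the two binary alternatives in the hypotheses has to be handled separately: near-compactness of $A_i$ collapses $g_i|_{\breve A_i}$ to a function representable in $C_0(X)$ so that $\langle g_i,\cdot\rangle$ becomes vaguely continuous on the relevant bounded set; finite $c_\kappa(A_i)$ together with (\ref{growth}) and Lemma~\ref{a.e.} yields the bound $\langle g_i^{r_i},\nu^i_t\rangle\leqslant\kappa(\nu_i,\nu^i_t)\leqslant\|\nu_i\|_\kappa\|\nu^i_t\|_\kappa$, giving the uniform $L^{r_i}$-integrability needed to pass to the limit by Hölder and truncation once strong convergence of $\nu^i_t$ is available; the upper bounded $g_i$ case reduces the question to convergence of total masses, again controlled by the strong convergence and the existing mass bound.

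Once admissibility is secured, strong convergence and (\ref{sem-norm}) yield $\kappa(\boldsymbol\nu_t,\boldsymbol\nu_t)\to\kappa(\boldsymbol\lambda,\boldsymbol\lambda)$, while $\langle\mathbf f,\boldsymbol\nu_t\rangle$ is vaguely l.s.c.\ in Case~I (Lemma~\ref{lemma:lower}) and converges to $\langle\mathbf f,\boldsymbol\lambda\rangle$ in Case~II by Lemma~\ref{caseii} and Cauchy--Schwarz. Hence
\[G_{\kappa,\mathbf f}(\boldsymbol\lambda)\leqslant\liminf_{t\in T}\,G_{\kappa,\mathbf f}(\boldsymbol\nu_t)=G^{\boldsymbol\sigma}_{\kappa,\mathbf f}(\mathbf A,\mathbf a,\mathbf g),\]
so $\boldsymbol\lambda\in\mathfrak S^{\boldsymbol\sigma}_{\kappa,\mathbf f}(\mathbf A,\mathbf a,\mathbf g)$, settling nonemptiness and the inclusion ``$\supset$'' in (\ref{desc-solv}); the reverse inclusion is trivial from the constant minimizing net at any given solution. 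The strong convergence (\ref{min-conv-str}) follows because the strong Cauchy net $(\boldsymbol\nu_t)$ converges strongly to every one of its vague cluster points, and all solutions lie in the same $R$-equivalence class by Lemma~\ref{l:unique}. Finally, vague compactness of $\mathfrak S^{\boldsymbol\sigma}_{\kappa,\mathbf f}(\mathbf A,\mathbf a,\mathbf g)$ results from embedding it in the vaguely compact product $\prod_{i\in I}\{\nu\in\mathfrak M^+(\breve A_i):\nu(X)\leqslant a_i g_{i,\inf}^{-1}\}$ and reapplying the admissibility argument above to an arbitrary vaguely convergent subnet of solutions to establish vague closedness.
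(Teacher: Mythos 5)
Your overall architecture (vague precompactness of a minimizing net, the strong Cauchy property from Corollary~\ref{cor:fund}, identification of a vague cluster point as a minimizer, then (\ref{desc-solv}) and vague compactness) coincides with the paper's, and your treatment of $i\in I_0$ --- applying Lemma~\ref{lemma:lower} to the positive net $\xi^i-\nu^i_t$ and using (\ref{boundd}) --- is a clean, correct alternative to the paper's exhaustion-by-compacts argument. There are, however, two gaps. The first is formal: you cannot ``immediately apply'' Theorem~\ref{th:str} to $\breve{\mathbf A}$, because $\breve{\mathbf A}$ need not be a condenser --- oppositely signed $\breve A_i$ may intersect (the paper's footnote to Definition~\ref{def:nearly} makes exactly this point). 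The paper instead re-runs the completeness argument on the scalar images $R\boldsymbol\nu_t$ via property (C$_2$) (Lemma~\ref{l:cons}), using only that each $\nu^i_t$ is supported by the closed $\breve A_i$ while $(R\boldsymbol\nu_t)^{\pm}$ are carried by the disjoint Borel sets $A^{\pm}$.

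The second gap is substantive: your argument that no $g_i$-mass escapes to infinity for $i\in I\setminus I_0$ with $\breve A_i$ noncompact does not close. From $\langle g_i^{r_i},\nu^i_t\rangle\leqslant\|\nu_i\|_\kappa\|\nu^i_t\|_\kappa$ and H\"older you only get, for compact $K\subset\breve A_i$ and $q_i:=r_i/(r_i-1)$, the bound $\langle g_i1_{\breve A_i\setminus K},\nu^i_t\rangle\leqslant\langle g_i^{r_i},\nu^i_t\rangle^{1/r_i}\,\nu^i_t(\breve A_i\setminus K)^{1/q_i}$, and the tail masses $\nu^i_t(\breve A_i\setminus K)$ are merely bounded, not uniformly small in $t$; their uniform smallness is precisely what must be proven, since escape of mass is the phenomenon at stake (cf.\ Example~\ref{ex-thin}). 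The paper's device is to replace the truncation $1_{\breve A_i\setminus K}$ by $\kappa(\cdot,\gamma_{\breve A_i\setminus K})^{1/q_i}$, where $\gamma_{\breve A_i\setminus K}$ is the interior equilibrium measure; this yields $\langle g_i1_{\breve A_i\setminus K},\nu^i_t\rangle\leqslant\|\nu_i\|_\kappa^{1/r_i}\,\|\gamma_{\breve A_i\setminus K}\|_\kappa^{1/q_i}\,\|\nu^i_t\|_\kappa$, and the uniform smallness comes from $\lim_{K}\|\gamma_{\breve A_i\setminus K}\|_\kappa=0$ --- which is exactly where the hypothesis $c_\kappa(A_i)<\infty$ and the consistency property (C$_1$) enter, neither of which your estimate uses. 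Likewise, your claim that the bounded-$g_i$ case ``reduces to convergence of total masses, controlled by the strong convergence'' is false: strong (energy) convergence does not control total masses (a unit measure escaping to infinity along a set of infinite capacity converges strongly to $0$). The paper handles bounded $g_i$ by reducing to (\ref{growth}) with $\nu_i:=M^{r_i}\gamma_{\breve A_i}$ and running the same equilibrium-measure estimate.
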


\begin{definition}\label{k-infty}Denoting by $\infty_X$ the Alexandroff point of $X$ \cite[Chapter~I, Section~9, n$^\circ$\,8]{B1}, we say that a kernel $\kappa$ \emph{possesses the property\/} $(\infty_X)$ if $\kappa(\cdot,y)\to0$ as $y\to\infty_X$
uniformly over compact sets in~$X$.\end{definition}

The Riesz kernel $\kappa_\alpha$, $\alpha\in(0,n)$, on $\mathbb R^n$, $n\geqslant3$, possesses the property $(\infty_X)$. So does the $2$-Green kernel $G^2_D$ on an open bounded set $D\subset\mathbb R^n$, $n\geqslant2$, provided that $D$ is regular in the sense of the solvability of the classical Dirichlet problem.

\begin{theorem}\label{th2}Assume a l.c.s.\ $X$ is metrizable and countable at infinity,\footnote{Theorem~\ref{th2} remains valid for an \emph{arbitrary\/} l.c.s.\ $X$ if we assume instead that only finitely many $\breve{A}_i$, $i\in I^-$, resp.\ $\breve{A}_i$, $i\in I^+$, can intersect one another (see Remark~\ref{rem:th2}).\label{foot:Part}} while a kernel\/ $\kappa(x,y)$ is continuous
for\/ $x\ne y$ and possesses the property\/ $(\infty_X)$. Let\/ $I^+$, resp.\ $I^-$, be finite, the\/ $A_i$, $i\in I$, be nearly compact, and let
\begin{equation}\label{th2:disj}\breve{A}^+\cap\breve{A}^-=\varnothing.\end{equation}
If, moreover, Case\/~{\rm I} takes place and\/ {\rm(\ref{ser2})} holds, then for any\/ $I_0$ and\/ $\boldsymbol\sigma$ the class\/ $\mathfrak
S^{\boldsymbol\sigma}_{\kappa,\mathbf f}(\mathbf A,\mathbf a,\mathbf
g)$ is nonempty, vaguely compact, and given
by\/~{\rm(\ref{desc-solv})}.
\end{theorem}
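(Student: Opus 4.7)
The plan is to prove Theorem~\ref{th2} by establishing vague lower semicontinuity of $G_{\kappa,\mathbf f}$ on the admissible class $\mathcal E^{\boldsymbol\sigma}_{\kappa,\mathbf f}(\mathbf A,\mathbf a,\mathbf g)$ and combining this with vague relative compactness of that class. First I would observe that the present hypotheses already imply the boundedness condition (\ref{bound}): assuming $I^-$ is finite, $\breve A^-=\bigcup_{j\in I^-}\breve A_j$ is a finite union of compacts and hence compact; property $(\infty_X)$ then gives a compact $K\subset X$ with $\kappa(x,y)<1$ for $x\in X\setminus K$ and $y\in\breve A^-$; on the compact set $(\breve A^+\cap K)\times\breve A^-$, which lies off the diagonal thanks to $\breve A^+\cap\breve A^-=\varnothing$, the kernel is continuous and therefore bounded. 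Hence $M_0:=\sup_{\breve A^+\times\breve A^-}\kappa<\infty$. Since (\ref{ser2}) forces $\mu^i(X)\leqslant a_ig_{i,\inf}^{-1}$ uniformly on the admissible class, Lemma~\ref{lem:vaguecomp} supplies vague relative compactness.

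The main step is the vague lower semicontinuity of $G_{\kappa,\mathbf f}$. Using (\ref{sem-norm}) and the Hahn--Jordan decomposition $R\boldsymbol\mu=(R\boldsymbol\mu)^+-(R\boldsymbol\mu)^-$,
\[
G_{\kappa,\mathbf f}(\boldsymbol\mu)=\|(R\boldsymbol\mu)^+\|_\kappa^2+\|(R\boldsymbol\mu)^-\|_\kappa^2-2\kappa\bigl((R\boldsymbol\mu)^+,(R\boldsymbol\mu)^-\bigr)+2\langle\mathbf f,\boldsymbol\mu\rangle.
\]
By Lemma~\ref{lem:vague'} vague convergence passes through $R$, so $(R\boldsymbol\mu_s)^\pm\to(R\boldsymbol\mu_0)^\pm$ vaguely; the first two squared-norm terms are then vaguely l.s.c.\ by the standard l.s.c.\ of $\nu\mapsto\kappa(\nu,\nu)$ on $\mathfrak M^+(X)$. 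In Case~I each $f_i$ is nonnegative (as $X$ is not compact when $I$ is infinite, and if $X$ is compact then $I$ is finite), so $\langle\mathbf f,\boldsymbol\mu\rangle=\sum_i\langle f_i,\mu^i\rangle$ is a supremum over finite partial sums of termwise-l.s.c.\ functionals (Lemma~\ref{lemma:lower}) and hence vaguely l.s.c. The main obstacle is to show that the cross term $\kappa((R\boldsymbol\mu)^+,(R\boldsymbol\mu)^-)$ is vaguely \emph{continuous} at $\boldsymbol\mu_0$, not merely l.s.c. Given $\varepsilon>0$, use $(\infty_X)$ to fix a compact $K\supset\breve A^-$ with $\sup_{(\breve A^+\setminus K)\times\breve A^-}\kappa<\varepsilon$. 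The tail contribution is bounded by $\varepsilon\cdot(R\boldsymbol\mu_s)^+(X)\cdot(R\boldsymbol\mu_s)^-(X)$, which is uniformly small by the mass bounds from (\ref{ser2}). On the compact set $(\breve A^+\cap K)\times\breve A^-$ the kernel is bounded and continuous; metrizability and countability at infinity of $X$ let one reduce to sequences, arrange (by a slight shrinkage of $K$ if necessary) that $\partial K$ is $(R\boldsymbol\mu_0)^+$-null, and apply Theorem~\ref{Portmanteau} together with vague convergence of $(R\boldsymbol\mu_s)^-$ on the compact $\breve A^-$ to pass to the limit in the double integral. Sending $\varepsilon\to0$ yields the continuity.

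Equipped with vague lower semicontinuity, fix any $(\boldsymbol\nu_t)_{t\in T}\in\mathbb M^{\boldsymbol\sigma}_{\kappa,\mathbf f}(\mathbf A,\mathbf a,\mathbf g)$, extract a vague cluster point $\boldsymbol\lambda^{\boldsymbol\sigma}_{\mathbf A}\in(\boldsymbol\nu_t)'_{t\in T}$, and verify admissibility: each $\lambda^i$ is carried by the compact $\breve A_i$ (since $\mathfrak M^+(\breve A_i)$ is vaguely closed), the continuity of $g_i$ on $\breve A_i$ preserves $\langle g_i,\lambda^i\rangle=a_i$, and the constraint $\lambda^i\leqslant\xi^i$ for $i\in I_0$ persists because $\xi^i-\mu^i_t\geqslant0$ passes to the vague limit. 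The bound $G_{\kappa,\mathbf f}(\boldsymbol\lambda^{\boldsymbol\sigma}_{\mathbf A})\leqslant\liminf_tG_{\kappa,\mathbf f}(\boldsymbol\nu_t)=G^{\boldsymbol\sigma}_{\kappa,\mathbf f}(\mathbf A,\mathbf a,\mathbf g)<\infty$ gives finite energy, whence Lemma~\ref{l:cl-q} upgrades $\lambda^i\in\mathfrak M^+(\breve A_i)$ to $\lambda^i\in\mathfrak M^+(A_i)$, so $\boldsymbol\lambda^{\boldsymbol\sigma}_{\mathbf A}\in\mathfrak S^{\boldsymbol\sigma}_{\kappa,\mathbf f}(\mathbf A,\mathbf a,\mathbf g)$. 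Vague compactness of $\mathfrak S^{\boldsymbol\sigma}_{\kappa,\mathbf f}$ follows because it is a vaguely closed subset of the vaguely relatively compact admissible class. For (\ref{desc-solv}), the right-to-left inclusion is the argument just given, while any solution realizes itself as the limit of the constant minimizing net at itself.
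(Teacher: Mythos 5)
Your proposal is correct and follows essentially the same route as the paper's proof: derive (\ref{bound}) from the property $(\infty_X)$, the off-diagonal continuity of $\kappa$, and the compactness of $\breve{A}^+$ (or $\breve{A}^-$); get vague relative compactness from the mass bounds of (\ref{ser2}); split the energy into the two diagonal terms (vaguely l.s.c.) plus the cross term, which is shown to be vaguely \emph{continuous} by the $\varepsilon$-tail estimate from $(\infty_X)$ together with Theorem~\ref{Portmanteau} on a compact $K$ with negligible boundary; and handle $\langle\mathbf f,\cdot\rangle$ by lower semicontinuity in Case~I. The verification of admissibility of the cluster point (compact carriers, preservation of $\langle g_i,\cdot\rangle=a_i$, persistence of the constraint, finite energy) likewise matches the paper's argument via Lemma~\ref{v-r-c}.
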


\begin{remark}If compared with Theorem~\ref{th1}, in Theorem~\ref{th2} the kernel $\kappa$ is not required to be consistent.
However, if under the hypotheses of
Theorem~\ref{th2}, the consistency of $\kappa$ takes place, then Theorem~\ref{th2} becomes valid in both Cases~I
and~II; and moreover, then (\ref{min-conv-str}) also holds.\end{remark}

Recall that a kernel $\kappa$ is said to satisfy the \emph{continuity principle\/} (or to be \emph{regular\/}) if for any $\mu\in\mathfrak M^+(X)$ with compact support $S(\mu)$, $\kappa(\cdot,\mu)$ is continuous on $X$ whenever its restriction to $S(\mu)$ is continuous. The Riesz kernel $\kappa_\alpha$, $\alpha\in(0,n)$, on $\mathbb R^n$, $n\geqslant3$, is regular \cite[Theorem~1.7]{L}. So is the $\alpha$-Green kernel $G_D^\alpha$, $\alpha\in(0,2]$, on an open set $D\subset\mathbb R^n$, $n\geqslant 3$ \cite[Corollary~4.8]{FZ}, as well as the logarithmic kernel on $\mathbb R^2$, the latter being seen by combining \cite[Theorem~1.6]{L} and \cite[Eq.~(1.3)]{O}.

\begin{theorem}\label{th3}Assume\/ $I$ is finite and the\/ $A_i$, $i\in I$,
are nearly compact. Let
the kernel\/ $\kappa$ be regular, and let the\/ $\kappa(\cdot,\xi^i)|_{\breve{A}_i}$,
$i\in I_0$, as well as the\/ $\kappa|_{\breve{A}_i\times\breve{A}_i}$, $i\in I\setminus I_0$, be continuous. Then in either Case\/~{\rm I} or
Case\/~{\rm II} and for any\/ $\mathbf a$ and\/ $\mathbf g$, the
conclusion of Theorem\/~{\rm\ref{th1}} remains valid.\footnote{Theorem~\ref{th3} is applicable to the classical kernels only provided that $I_0=I$.}\end{theorem}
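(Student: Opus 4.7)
The plan is to follow the strategy of Theorem~\ref{th1} while replacing the consistency hypothesis by direct arguments exploiting the continuity of potentials forced by the regularity of $\kappa$. By Lemma~\ref{l:cl-q} I work with the compact surrogates $\breve{A}_i$ in place of $A_i$, and the finiteness of $G^{\boldsymbol\sigma}_{\kappa,\mathbf f}(\mathbf A,\mathbf a,\mathbf g)$ follows from Corollary~\ref{cor:gfinite}. The crucial preliminary step is to show that, for every admissible $\boldsymbol\mu$ and every $i\in I$, the potential $\kappa(\cdot,\mu^i)$ is continuous on all of $X$. For $i\in I\setminus I_0$, continuity of $\kappa|_{\breve{A}_i\times\breve{A}_i}$ on a compact product gives continuity of $\kappa(\cdot,\mu^i)|_{\breve{A}_i}$ by bounded convergence, and regularity of $\kappa$ extends this to $X$. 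For $i\in I_0$, the identity $\kappa(\cdot,\mu^i)=\kappa(\cdot,\xi^i)-\kappa(\cdot,\xi^i-\mu^i)$, combined with the lower semicontinuity of the two positive potentials on the right and the assumed continuity of $\kappa(\cdot,\xi^i)|_{\breve{A}_i}$, pins down $\kappa(\cdot,\mu^i)|_{\breve{A}_i}$ as continuous at every point; regularity again extends this to $X$. I expect this delicate step to be the main obstacle, as the measure $\xi^i-\mu^i$ has no a priori control beyond $\breve{A}_i$.

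Next, I would take any $(\boldsymbol\mu_s)_{s\in S}\in\mathbb M^{\boldsymbol\sigma}_{\kappa,\mathbf f}(\mathbf A,\mathbf a,\mathbf g)$, which is strong Cauchy by Corollary~\ref{cor:fund}. Since $\mu^i_s(X)\leqslant a_i/\inf_{\breve{A}_i}g_i$ uniformly in $s$, Lemma~\ref{lem:vaguecomp} yields vague relative compactness; extract a vague cluster point $\boldsymbol\lambda$. Its admissibility is standard: each $\lambda^i$ is carried by $\breve{A}_i$ by closedness, the constraint $\lambda^i\leqslant\xi^i$ passes to the vague limit for $i\in I_0$, and $\langle g_i,\lambda^i\rangle=a_i$ follows from vague convergence after multiplying $g_i$ by a cutoff in $C_0(X)$ equal to $1$ on $\breve{A}_i$.

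To show $\boldsymbol\lambda$ is a minimizer, I apply the inequality $\|R\boldsymbol\mu_s-R\boldsymbol\lambda\|_\kappa^2\geqslant 0$, which rearranges to $\|R\boldsymbol\mu_s\|_\kappa^2\geqslant 2\kappa(R\boldsymbol\mu_s,R\boldsymbol\lambda)-\|R\boldsymbol\lambda\|_\kappa^2$. Since $\kappa(\cdot,R\boldsymbol\lambda)=\sum_i s_i\kappa(\cdot,\lambda^i)$ is continuous on $X$ by the preliminary step, a $C_0$-cutoff argument combined with the vague convergence of $R\boldsymbol\mu_s\to R\boldsymbol\lambda$ supplied by Lemma~\ref{lem:vague'} yields $\kappa(R\boldsymbol\mu_s,R\boldsymbol\lambda)\to\|R\boldsymbol\lambda\|_\kappa^2$, whence $\liminf_s\kappa(\boldsymbol\mu_s,\boldsymbol\mu_s)\geqslant\kappa(\boldsymbol\lambda,\boldsymbol\lambda)$. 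In Case~I, Lemma~\ref{lemma:lower} supplies the analogous lower semicontinuity of $\langle\mathbf f,\boldsymbol\mu\rangle$; in Case~II, Lemma~\ref{caseii} lets me rerun the same parallelogram trick with $R\boldsymbol\mu+\zeta$ in place of $R\boldsymbol\mu$. Combining, $G_{\kappa,\mathbf f}(\boldsymbol\lambda)\leqslant\liminf_s G_{\kappa,\mathbf f}(\boldsymbol\mu_s)=G^{\boldsymbol\sigma}_{\kappa,\mathbf f}(\mathbf A,\mathbf a,\mathbf g)$, so $\boldsymbol\lambda\in\mathfrak S^{\boldsymbol\sigma}_{\kappa,\mathbf f}(\mathbf A,\mathbf a,\mathbf g)$.

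For~(\ref{min-conv-str}), once $\boldsymbol\lambda$ is a minimizer we have $G_{\kappa,\mathbf f}(\boldsymbol\mu_s)\to G_{\kappa,\mathbf f}(\boldsymbol\lambda)$; the two separate lower semicontinuity bounds just proved, coupled with the squeezing observation that if $u_s+2v_s\to 0$ with $\liminf u_s,\liminf v_s\geqslant 0$ then $u_s,v_s\to 0$, force $\|R\boldsymbol\mu_s\|_\kappa^2\to\|R\boldsymbol\lambda\|_\kappa^2$, which combined with $\kappa(R\boldsymbol\mu_s,R\boldsymbol\lambda)\to\|R\boldsymbol\lambda\|_\kappa^2$ yields $\|R\boldsymbol\mu_s-R\boldsymbol\lambda\|_\kappa^2\to 0$. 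Equation~(\ref{desc-solv}) and vague compactness of $\mathfrak S^{\boldsymbol\sigma}_{\kappa,\mathbf f}(\mathbf A,\mathbf a,\mathbf g)$ then follow by applying the whole argument to the constant net at an arbitrary minimizer and by observing closedness of $\mathfrak S^{\boldsymbol\sigma}_{\kappa,\mathbf f}(\mathbf A,\mathbf a,\mathbf g)$ under vague limits of minimizers.
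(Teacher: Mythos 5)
Your preliminary step and your Case~I argument are essentially the paper's: the continuity of each $\kappa(\cdot,\mu^i)$ on all of $X$ (via the decomposition through $\xi^i$ for $i\in I_0$, the continuity of $\kappa|_{\breve{A}_i\times\breve{A}_i}$ for $i\in I\setminus I_0$, and the regularity of the kernel) is exactly the crucial point, and your cutoff argument for $\kappa(R\boldsymbol\mu_t,R\boldsymbol\lambda)\to\|R\boldsymbol\lambda\|^2_\kappa$ is legitimate because all the measures involved are supported by the compact set $\bigcup_{i\in I}\breve{A}_i$. The gap is in Case~II. ``Rerunning the parallelogram trick with $R\boldsymbol\mu+\zeta$'' forces you to control the cross term $\kappa(R\boldsymbol\mu_t,\zeta)=\int\kappa(\cdot,\zeta)\,dR\boldsymbol\mu_t$. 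Unlike $\kappa(\cdot,R\boldsymbol\lambda)$, the potential $\kappa(\cdot,\zeta)$ of an arbitrary signed $\zeta\in\mathcal E_\kappa(X)$ is neither continuous nor semicontinuous (it is only defined n.e.), so the vague convergence $R\boldsymbol\mu_t\to R\boldsymbol\lambda$ gives nothing for this integrand; and since Theorem~\ref{th3} deliberately drops the consistency hypothesis, you cannot upgrade vague to weak convergence either. Equivalently, the second of your ``two separate lower semicontinuity bounds'' has no analogue when $f_i=s_i\kappa(\cdot,\zeta)$, so both the minimality of $\boldsymbol\lambda$ and the final squeezing argument break down in Case~II.

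The repair — and this is how the paper orders the proof — is to establish the strong convergence $\|R\boldsymbol\mu_t-R\boldsymbol\lambda\|_\kappa\to0$ \emph{before} addressing minimality, using only tools you already have: since every $\kappa(\cdot,\mu^i_t)$ is continuous by the same preliminary step, the iterated limit $\lim_t\lim_{t'}\kappa(\mu^i_t,\mu^j_{t'})=\kappa(\mu^i,\mu^j)$ holds for all $i,j$, and combined with the strong Cauchy property of the minimizing net (Corollary~\ref{cor:fund}) this identifies $\lim_t\|R\boldsymbol\mu_t\|_\kappa=\|R\boldsymbol\lambda\|_\kappa$ and hence yields strong convergence. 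Strong convergence implies weak convergence, so $\kappa(\zeta,R\boldsymbol\mu_t)\to\kappa(\zeta,R\boldsymbol\lambda)$ and Case~II then follows at once from (\ref{yusss}). Two minor further points: the finiteness of $G^{\boldsymbol\sigma}_{\kappa,\mathbf f}(\mathbf A,\mathbf a,\mathbf g)$ is the standing assumption (\ref{Gconfin}) and should not be derived from Corollary~\ref{cor:gfinite}, whose hypotheses are not among those of Theorem~\ref{th3}; and your worry about $\xi^i-\mu^i$ ``beyond $\breve{A}_i$'' disappears once $\xi^i$ is replaced by the extension by $0$ of its restriction to $A_i\cap\breve{A}_i$, which involves no loss of generality.
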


\begin{remark} In contrast to Theorem~\ref{th2}, in Theorem~\ref{th3} the sets $\breve{A}^+$ and $\breve{A}^-$ may have points in common. But then necessarily
$c_\kappa(\breve{A}^+\cap\breve{A}^-)=0$, and hence $\breve{A}^+\cap\breve{A}^-$ cannot carry any nonzero $\nu\in\mathcal E_\kappa(X)$ (see Lemma~\ref{cap0}).
\end{remark}

\begin{corollary}\label{cor:v:conv}Under the hypotheses of any of Theorems\/~{\rm\ref{th1}}, {\rm\ref{th2}},
or\/ {\rm\ref{th3}}, if moreover\/ $\kappa$ is
strictly positive definite, while the\/ $A_i$, $i\in I$, are
mutually essentially disjoint, then\/ $\mathfrak S^{\boldsymbol\sigma}_{\kappa,\mathbf f}(\mathbf
A,\mathbf a,\mathbf g)$ reduces to a unique element\/
$\boldsymbol\lambda^{\boldsymbol\sigma}_{\mathbf A}$, and every\/
$(\boldsymbol\nu_t)_{t\in T}\in\mathbb
M^{\boldsymbol\sigma}_{\kappa,\mathbf f}(\mathbf A,\mathbf a,\mathbf
g)$ converges to this\/
$\boldsymbol\lambda^{\boldsymbol\sigma}_{\mathbf A}$
vaguely.\end{corollary}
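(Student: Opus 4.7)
The plan is to combine the existence assertions of Theorems~\ref{th1}, \ref{th2}, or \ref{th3} with the uniqueness provided by Corollary~\ref{c:unique}, and then to use the vague cluster-set description~(\ref{desc-solv}) together with vague relative compactness of minimizing nets to pass from uniqueness of the minimizer to vague convergence.

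First, under any of the three sets of hypotheses, the applicable theorem yields $\mathfrak S^{\boldsymbol\sigma}_{\kappa,\mathbf f}(\mathbf A,\mathbf a,\mathbf g)\neq\varnothing$. The additional assumptions that $\kappa$ is strictly positive definite and that the $A_i$, $i\in I$, are mutually essentially disjoint then allow Corollary~\ref{c:unique} to be applied, giving at most one solution. Hence $\mathfrak S^{\boldsymbol\sigma}_{\kappa,\mathbf f}(\mathbf A,\mathbf a,\mathbf g)=\{\boldsymbol\lambda^{\boldsymbol\sigma}_{\mathbf A}\}$.

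Next, I would verify that any minimizing net $(\boldsymbol\nu_t)_{t\in T}\in\mathbb M^{\boldsymbol\sigma}_{\kappa,\mathbf f}(\mathbf A,\mathbf a,\mathbf g)$ is vaguely relatively compact in $\mathfrak M^+(X)^{\mathrm{Card}\,I}$. For any compact $K\subset X$, the continuous function $g_i>0$ satisfies $\inf_{x\in K}g_i(x)>0$, and since $\nu_t^i$ is carried by $A_i$ and $\langle g_i,\nu_t^i\rangle=a_i$, we obtain
\[
\nu_t^i(K)\leqslant\frac{a_i}{\inf_{x\in K}g_i(x)}\quad\text{for all }t\in T,
\]
so $|\nu_t^i(\varphi)|$ is bounded uniformly in $t$ for every $\varphi\in C_0(X)$ and every $i\in I$ (in the alternative case of the permanent hypothesis where (\ref{infg}) holds, one even has the global bound $\nu_t^i(X)\leqslant a_ig_{i,\inf}^{-1}$). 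Thus $\{\boldsymbol\nu_t\}_{t\in T}$ is vaguely bounded in the sense of Definition~\ref{def:v:b}, hence vaguely relatively compact by Lemma~\ref{lem:vaguecomp}.

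To conclude, the applicable theorem gives via (\ref{desc-solv}) that $(\boldsymbol\nu_t)'_{t\in T}\subset\mathfrak S^{\boldsymbol\sigma}_{\kappa,\mathbf f}(\mathbf A,\mathbf a,\mathbf g)=\{\boldsymbol\lambda^{\boldsymbol\sigma}_{\mathbf A}\}$, while vague relative compactness forces $(\boldsymbol\nu_t)'_{t\in T}$ to be nonempty; hence $(\boldsymbol\nu_t)'_{t\in T}=\{\boldsymbol\lambda^{\boldsymbol\sigma}_{\mathbf A}\}$. Since the vague topology on $\mathfrak M^+(X)^{\mathrm{Card}\,I}$ is Hausdorff, a net lying in a vaguely compact set with a unique vague cluster point must converge vaguely to that point (any open neighborhood of the cluster point must eventually contain the net, else a cofinal subnet outside the neighborhood would produce a cluster point distinct from $\boldsymbol\lambda^{\boldsymbol\sigma}_{\mathbf A}$ inside the compact complement). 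This yields $\boldsymbol\nu_t\to\boldsymbol\lambda^{\boldsymbol\sigma}_{\mathbf A}$ vaguely. The argument poses no serious obstacle; the only delicate ingredient is the vague relative compactness step, and that is immediate from the normalization constraints together with the continuity and strict positivity of the $g_i$.
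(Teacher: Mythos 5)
Your proposal is correct and follows essentially the same route as the paper: existence from the applicable theorem, uniqueness from Corollary~\ref{c:unique}, the cluster-set description (\ref{desc-solv}), and the Hausdorff-plus-compactness argument that a relatively compact net with a unique cluster point converges. The only difference is that you spell out explicitly the vague boundedness (hence relative compactness, via Lemma~\ref{lem:vaguecomp}) of minimizing nets, which the paper leaves implicit behind its citation of Bourbaki and the bounds already established in the proofs of the theorems.
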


\section{Proofs of Theorems~\ref{th1}, \ref{th2}, and \ref{th3} and Corollary~\ref{cor:v:conv}}

\subsection{Auxiliary results}\label{sec:lemmas} Throughout Section~\ref{sec:lemmas}, the $A_i$, $i\in I$,
are assumed to be nearly closed. Write
\[\mathcal E_\kappa^{\boldsymbol\sigma}(\mathbf A,\leqslant\!\mathbf
a,\mathbf g):=\bigl\{\boldsymbol\nu\in\mathcal E^+_\kappa(\mathbf
A): \ \nu^i\leqslant\sigma^i, \ \langle g_i,\nu^i\rangle\leqslant
a_i\text{ \ for all\ }i\in I\bigr\}.\]

\begin{lemma}\label{v-r-c}If\/ {\rm(\ref{ser2})} and\/ {\rm(\ref{bound})} both
hold, then the vague cluster set\/ $(\boldsymbol\mu_s)'_{s\in S}$ of any\/ $(\boldsymbol\mu_s)_{s\in S}\in\mathbb
M^{\boldsymbol\sigma}_{\kappa,\mathbf f}(\mathbf A,\mathbf a,\mathbf
g)$ is nonempty, and moreover
\begin{equation}\label{inclusion1}(\boldsymbol\mu_s)'_{s\in S}\subset
\mathcal E_\kappa^{\boldsymbol\sigma}(\mathbf A, \leqslant\!\mathbf
a,\mathbf g).\end{equation}
\end{lemma}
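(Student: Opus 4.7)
The plan is to produce a vague cluster point by a compactness argument and then verify the four properties defining membership in $\mathcal E_\kappa^{\boldsymbol\sigma}(\mathbf A,\leqslant\!\mathbf a,\mathbf g)$: that each component is carried by $A_i$, that $\langle g_i,\mu^i_0\rangle\leqslant a_i$, that $\mu^i_0\leqslant\sigma^i$ for $i\in I_0$, and (the crux) that $\boldsymbol\mu_0\in\mathcal E_\kappa^+(\mathbf A)$. The main obstacle will be this last point, where assumption (\ref{bound}) enters in a decisive way to tame the cross-energy between positively and negatively signed components.

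First I would use $\langle g_i,\mu^i_s\rangle=a_i$ together with the permanent assumption (\ref{infg}) to get the mass bound $\mu^i_s(X)\leqslant a_ig_{i,\inf}^{-1}$ (see (\ref{gmasses})), which shows that $\{\boldsymbol\mu_s\}_{s\in S}$ is vaguely bounded in the product space; Lemma~\ref{lem:vaguecomp} then produces a cluster point $\boldsymbol\mu_0=(\mu^i_0)_{i\in I}$, and passing to a subnet one may assume $\boldsymbol\mu_s\to\boldsymbol\mu_0$ vaguely. Since $A_i$ is nearly closed, Lemma~\ref{l:cl-q} gives $\mu^i_s\in\mathfrak M^+(\breve A_i)$, and vague closedness of this cone propagates to the limit: $\mu^i_0\in\mathfrak M^+(\breve A_i)$. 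Vague lower semicontinuity of $\nu\mapsto\langle g_i,\nu\rangle$ (Lemma~\ref{lemma:lower}) yields $\langle g_i,\mu^i_0\rangle\leqslant a_i$; and for $i\in I_0$, testing the vaguely continuous maps $\nu\mapsto\xi^i(\varphi)-\nu(\varphi)$ with $\varphi\in C_0(X)$, $\varphi\geqslant0$, shows that $\mu^i_s\leqslant\xi^i$ is preserved under vague convergence.

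The finite-energy step is the heart of the argument. I would set $\nu^\pm_s:=(R\boldsymbol\mu_s)^\pm=\sum_{i\in I^\pm}\mu^i_s$ and likewise $\nu^\pm_0:=\sum_{i\in I^\pm}\mu^i_0$; local finiteness of $(A_i)_{i\in I}$ ensures $\nu^\pm_s\to\nu^\pm_0$ vaguely. Minimizing nets are bounded in $\|R\boldsymbol\mu_s\|_\kappa$: in Case~II this is immediate from (\ref{yusss}), and in Case~I it follows by combining the upper bound on $G_{\kappa,\mathbf f}(\boldsymbol\mu_s)$ with the lower bound on $\langle\mathbf f,\boldsymbol\mu_s\rangle$ obtained from the argument behind Lemma~\ref{lemma:minusfinite}. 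Using (\ref{bound}) and the uniform mass bound $\nu^\pm_s(X)\leqslant C$ coming from (\ref{ser2}), one gets $\kappa(\nu^+_s,\nu^-_s)\leqslant C^2\sup_{\breve A^+\times\breve A^-}\kappa<\infty$, and expanding
\[\|R\boldsymbol\mu_s\|^2_\kappa=\|\nu^+_s\|^2_\kappa+\|\nu^-_s\|^2_\kappa-2\kappa(\nu^+_s,\nu^-_s)\]
shows $\|\nu^\pm_s\|_\kappa$ stays bounded uniformly in $s$. Vague lower semicontinuity of the (scalar) energy then gives $\nu^\pm_0\in\mathcal E^+_\kappa(X)$.

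To finish, I would observe that for each $i\in I$ the positive measure $\mu^i_0$ is dominated by the appropriate $\nu^\pm_0$ of finite energy; since the kernel is nonnegative (unless $X$ is compact, in which case $I$ is finite and the conclusion is direct), monotonicity of energy under domination places $\mu^i_0$ in $\mathcal E^+_\kappa(\breve A_i)=\mathcal E^+_\kappa(A_i)$ by Lemma~\ref{l:cl-q}, so $\mu^i_0\in\mathfrak M^+(A_i)$ and therefore $\boldsymbol\mu_0\in\mathfrak M^+(\mathbf A)$. A final combination of the identities $\sum_{i,j\in I^\pm}\kappa(\mu^i_0,\mu^j_0)=\|\nu^\pm_0\|^2_\kappa$ with the mixed-sum bound $\sum_{i\in I^+,j\in I^-}\kappa(\mu^i_0,\mu^j_0)=\kappa(\nu^+_0,\nu^-_0)\leqslant C^2\sup_{\breve A^+\times\breve A^-}\kappa$ verifies absolute convergence of the series in (\ref{vectoren}), placing $\boldsymbol\mu_0$ in $\mathcal E^+_\kappa(\mathbf A)$ and establishing (\ref{inclusion1}).
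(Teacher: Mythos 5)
Your proof is correct and follows essentially the same route as the paper's: mass bounds from (\ref{ser2}) give vague compactness, (\ref{bound}) controls the cross-energy $\kappa(R\boldsymbol\mu_s^+,R\boldsymbol\mu_s^-)$ so that $\|R\boldsymbol\mu_s^\pm\|_\kappa$ is bounded, and vague lower semicontinuity of the energy and of $\nu\mapsto\langle g_i,\nu\rangle$ handles the limit. The only local deviation is at the last step, where the paper first proves the uniform bound $\sup_s\|\mu_s^i\|_\kappa<\infty$ for each $i$ and invokes the vague closedness of the truncated cone (Lemma~\ref{l:aux2}), whereas you deduce $\mu_0^i\in\mathcal E^+_\kappa(\breve A_i)=\mathcal E^+_\kappa(A_i)$ directly from $\mu_0^i\leqslant\nu_0^\pm$ and monotonicity of the energy for a nonnegative kernel (with the compact-$X$ case handled separately) — both are valid, and your domination argument is slightly more economical for this lemma taken in isolation, though the paper's uniform bound is reused later in the proof of Theorem~\ref{th1}.
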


\begin{proof}Fix a net $(\boldsymbol\mu_s)_{s\in S}\in\mathbb
M^{\boldsymbol\sigma}_{\kappa,\mathbf f}(\mathbf A,\mathbf a,\mathbf
g)$. It is strong Cauchy in the
semimetric space $\mathcal E^+_\kappa(\mathbf A)$ by
Corollary~\ref{cor:fund}, and hence strongly bounded, i.e.,
\begin{equation}\label{net:bound}\sup_{s\in S}\,\|\boldsymbol\mu_s\|^2_{\mathcal E^+_\kappa(\mathbf
A)}=\sup_{s\in
S}\,\|R\boldsymbol\mu_s\|^2_\kappa<\infty,\end{equation} the
equality being valid by (\ref{sem-norm}). Furthermore, it follows from (\ref{ser2}) that (\ref{infg}) and, hence,
(\ref{gmasses}) (with $a_i$ and $\mu_s^i$ in place of $c$ and $\mu^i$) both hold. Thus,
\begin{equation}\label{m2}\sup_{s\in S}\,|R\boldsymbol\mu_s|(X)=\sup_{s\in S}\,\sum_{i\in
I}\,\mu_s^i(A_i)\leqslant\sum_{i\in
I}\,a_ig_{i,\inf}^{-1}=C<\infty.\end{equation} By
Lemma~\ref{l:cl-q} with $Q=A_i$, the $\mu_s^i$, $s\in S$, are
supported by $\breve{A}_i$, $A_i$ being nearly closed. Hence,
$R\boldsymbol\mu_s^\pm$ is supported by $\breve{A}^\pm$, cf.\ (\ref{brevea}), and
therefore
\[\sup_{(x,y)\in S(R\boldsymbol\mu_s^+)\times S(R\boldsymbol\mu_s^-)}\,\kappa(x,y)\leqslant
\sup_{(x,y)\in\breve{A}^+\times\breve{A}^-}\,\kappa(x,y)<\infty\text{
\ for all\ }s\in S,\] where the latter inequality holds by
(\ref{bound}). Combining this with (\ref{m2}) establishes the
inequality
\[\kappa(R\boldsymbol\mu_s^+,R\boldsymbol\mu_s^-)\leqslant M<\infty\text{ \ for all\ }s\in S,\]
which together with (\ref{net:bound}) yields
\begin{equation}\label{net:boundpm}\sup_{s\in
S}\,\|R\boldsymbol\mu_s^\pm\|_\kappa<\infty.\end{equation}

We next observe that for every $i\in I$,
\begin{equation}\label{net:boundpmi}\sup_{s\in S}\,\|\mu_s^i\|_\kappa<\infty.\end{equation} In view of
(\ref{net:boundpm}), this will follow once we have established the
inequality
\begin{equation}\label{in:aux}\inf_{s\in S}\,\sum_{k,m\in I^\pm,\ k\ne
m}\,\kappa(\mu_s^k,\mu_s^m)>-\infty.\end{equation} Assume $X$ is
compact, for if not, then $\kappa\geqslant0$ and the left-hand side
in (\ref{in:aux}) is ${}\geqslant0$. But then the l.s.c.\ function
$\kappa$ on $X\times X$ is ${}\geqslant-c$, where $c\in(0,\infty)$, while $I$ is
finite; and hence (\ref{in:aux}) follows from (\ref{m2}) in a way
similar to that in the proof of Lemma~\ref{lemma:minusfinite}.

As seen from (\ref{m2}), the net $(\boldsymbol\mu_s)_{s\in S}$ is
vaguely bounded, and hence, by Lemma~\ref{lem:vaguecomp}, it is
relatively compact in the vague topology on $\mathfrak
M^+(X)^{\mathrm{Card}\,I}$. Thus, there is a subnet
$(\boldsymbol\mu_t)_{t\in T}$ of the net $(\boldsymbol\mu_s)_{s\in
S}$ such that for every $i\in I$,
\begin{equation}\label{i}\mu_t^i\to\mu^i\text{ \ vaguely as $t$ increases along $T$},\end{equation}
where $\mu^i\in\mathfrak M^+(X)$. It follows from
(\ref{net:boundpmi}) and (\ref{i}) by Lemmas~\ref{l:cl-q}
and~\ref{l:aux2} that $\mu^i\in\mathcal E^+_\kappa(\breve{A}_i)=\mathcal E^+_\kappa(A_i)$, and
hence $\boldsymbol\mu:=(\mu^i)_{i\in I}\in\mathfrak M^+(\mathbf A)$.

Moreover, $R\boldsymbol\mu^\pm$ is the vague limit of
$R\boldsymbol\mu_t^\pm$ as $t$ increases along $T$, which is obtained
from (\ref{i}) according to Lemma~\ref{lem:vague'}. Applying
\cite[Lemma~2.2.1(e)]{F1}, we therefore see from (\ref{net:boundpm})
that the energy of $R\boldsymbol\mu^\pm$ is finite. Since $\kappa$
is positive definite, so is
$\kappa(R\boldsymbol\mu^+,R\boldsymbol\mu^-)$ (see, e.g.,
\cite[Lemma~3.1.1]{F1}), and altogether $R\boldsymbol\mu\in\mathcal
E_\kappa(X)$. In view of (\ref{iff}), we thus have
\[\boldsymbol\mu\in\mathcal E^+_\kappa(\mathbf A).\]

Applying
Lemma~\ref{lemma:lower} to the continuous function
$g_i>0$, we also obtain from~(\ref{i})
\begin{equation}\label{g-ineq}\langle
g_i,\mu^i\rangle\leqslant\lim_{t\in T}\,\langle
g_i,\mu_t^i\rangle=a_i.\end{equation} Noting that
$\xi^i-\mu_t^i\geqslant0$ for all $i\in I_0$ and $t\in T$ as well as
that the vague limit of a net of positive (scalar) measures likewise
is positive, we finally see from (\ref{i}) that
$\mu^i\leqslant\sigma^i$ for all $i\in I$. This together with the
two preceding displays shows that, actually,
$\boldsymbol\mu\in\mathcal E_\kappa^{\boldsymbol\sigma}(\mathbf A,
\leqslant\!\mathbf a,\mathbf g)$, which establishes (\ref{inclusion1}).
\end{proof}

\begin{lemma}\label{l:cons} Let\/ {\rm(\ref{ser2})} and\/ {\rm(\ref{bound})} both
hold, and let\/ $\kappa$ be consistent. For every\/
$(\boldsymbol\mu_s)_{s\in S}\in\mathbb
M^{\boldsymbol\sigma}_{\kappa,\mathbf f}(\mathbf A,\mathbf a,\mathbf
g)$ and every\/ $\boldsymbol\mu\in(\boldsymbol\mu_s)'_{s\in S}$,
\begin{equation}\label{s-srt-conv}\lim_{s\in
S}\,\|\boldsymbol\mu_s-\boldsymbol\mu\|_{\mathcal E_\kappa^+(\mathbf
A)}=0,\end{equation}
\begin{equation}\label{sol1}-\infty<G_{\kappa,\mathbf
f}(\boldsymbol\mu)\leqslant\lim_{s\in S}\,G_{\kappa,\mathbf
f}(\boldsymbol\mu_s)=G^{\boldsymbol\sigma}_{\kappa,\mathbf
f}(\mathbf A,\mathbf a,\mathbf g)<\infty.\end{equation}
\end{lemma}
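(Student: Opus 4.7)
The plan is to reduce (\ref{s-srt-conv}) to Theorem~\ref{th:str} by viewing the nearly closed plates $A_i$ through their closed counterparts $\breve A_i$, and then derive (\ref{sol1}) from the strong convergence via standard lower semicontinuity arguments.

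First I would note that by Corollary~\ref{cor:fund}, the minimizing net $(\boldsymbol\mu_s)_{s\in S}$ is strong Cauchy in $\mathcal E^+_\kappa(\mathbf A)$. Under the hypotheses (\ref{ser2}) and (\ref{bound}), Lemma~\ref{v-r-c} guarantees that $\boldsymbol\mu\in\mathcal E^{\boldsymbol\sigma}_\kappa(\mathbf A,\leqslant\!\mathbf a,\mathbf g)$ and furnishes a subnet $(\boldsymbol\mu_t)_{t\in T}$ with $\mu_t^i\to\mu^i$ vaguely for every $i\in I$; its proof also yields $\sup_t\|R\boldsymbol\mu_t^\pm\|_\kappa<\infty$ together with $R\boldsymbol\mu_t^\pm\to R\boldsymbol\mu^\pm$ vaguely, the limits being carried by the closed sets $\breve A^\pm$ from (\ref{brevea}).

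Next I would invoke Theorem~\ref{th:str} applied to the standard condenser $\breve{\mathbf A}=(\breve A_i)_{i\in I}$. Thanks to Lemma~\ref{l:cl-q}, $\mathcal E^+_\kappa(A_i)=\mathcal E^+_\kappa(\breve A_i)$, so $\boldsymbol\mu_s$ and $\boldsymbol\mu$ sit in $\mathcal E^+_\kappa(\breve{\mathbf A},\leqslant\!\mathbf a,\mathbf g)$, carrying strong and vague topologies that coincide with those on the original space. The assumptions (\ref{bound}) and (\ref{ser2}) match the upper boundedness and mass-control hypotheses of Theorem~\ref{th:str}, and any intersection $\breve A^+\cap\breve A^-$ has zero capacity and is therefore negligible for finite-energy measures by Lemma~\ref{cap0}. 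Theorem~\ref{th:str} then delivers that every strong Cauchy net in this ambient space converges strongly to each of its vague cluster points, giving (\ref{s-srt-conv}).

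Finally, (\ref{sol1}) assembles as follows: finiteness $-\infty<G_{\kappa,\mathbf f}(\boldsymbol\mu)$ is Lemma~\ref{lemma:minusfinite}, the bound $G^{\boldsymbol\sigma}_{\kappa,\mathbf f}(\mathbf A,\mathbf a,\mathbf g)<\infty$ is (\ref{Gconfin}), and the equality $\lim_s G_{\kappa,\mathbf f}(\boldsymbol\mu_s)=G^{\boldsymbol\sigma}_{\kappa,\mathbf f}(\mathbf A,\mathbf a,\mathbf g)$ is Definition~\ref{def:minnet}. The remaining inequality $G_{\kappa,\mathbf f}(\boldsymbol\mu)\leqslant\lim_s G_{\kappa,\mathbf f}(\boldsymbol\mu_s)$ splits by (\ref{wen}) and (\ref{sem-norm}): the energy part $\|R\boldsymbol\mu_s\|_\kappa^2\to\|R\boldsymbol\mu\|_\kappa^2$ by continuity of the norm under the strong convergence just proved, while the external-field part $\langle\mathbf f,\boldsymbol\mu\rangle\leqslant\liminf_s\langle\mathbf f,\boldsymbol\mu_s\rangle$ follows in Case~I from Lemma~\ref{lemma:lower} applied termwise (with (\ref{ser2}) ensuring the requisite summability for a Fatou-type passage to the limit), and in Case~II from $\langle\mathbf f,\boldsymbol\mu_s\rangle=\kappa(\zeta,R\boldsymbol\mu_s)\to\kappa(\zeta,R\boldsymbol\mu)=\langle\mathbf f,\boldsymbol\mu\rangle$, itself a consequence of Cauchy--Schwarz applied to the strongly convergent net $R\boldsymbol\mu_s$.

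The main obstacle is the adaptation of Theorem~\ref{th:str} from standard to nearly closed condensers: one must verify that a possibly nonempty zero-capacity intersection $\breve A^+\cap\breve A^-$ does not invalidate the strong completeness result, either by checking that the proof in \cite{ZPot2} goes through verbatim with $A_i\cap A_j=\varnothing$ replaced by $c_\kappa(A_i\cap A_j)=0$, or by modifying the $\breve A_j$, $j\in I^-$, by removing a zero-capacity set to restore strict disjointness without altering the $\mathcal E^+_\kappa$-classes. A secondary subtlety is the Case~I termwise lower semicontinuity of the external field, which must be justified uniformly under the infinite sum thanks to the uniform mass bound $\mu_s^i(X)\leqslant a_ig_{i,\inf}^{-1}$ derived from (\ref{ser2}).
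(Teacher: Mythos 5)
The second half of your argument, the chain of inequalities in (\ref{sol1}), is essentially the paper's: lower semicontinuity of $\langle f_i,\cdot\rangle$ along the vaguely convergent subnet in Case~I, the identity $\langle\mathbf f,\boldsymbol\mu\rangle=\kappa(\zeta,R\boldsymbol\mu)$ in Case~II, and continuity of the energy under the strong convergence (\ref{s-srt-conv}). One small imprecision: $-\infty<G_{\kappa,\mathbf f}(\boldsymbol\mu)$ is not literally Lemma~\ref{lemma:minusfinite}, since at this stage one only knows $\langle g_i,\mu^i\rangle\leqslant a_i$, so $\boldsymbol\mu$ need not lie in the admissible class that lemma speaks about; however, the same compact/noncompact dichotomy used in its proof applies verbatim, which is exactly what the paper does.

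The genuine gap is in your derivation of (\ref{s-srt-conv}). You reduce it to Theorem~\ref{th:str} applied to $\breve{\mathbf A}=(\breve{A}_i)_{i\in I}$, but that theorem is stated (and proved in \cite{ZPot2}) only for a standard condenser, i.e., under the exact disjointness $\breve{A}_i\cap\breve{A}_j=\varnothing$ whenever $s_is_j=-1$; here one only has $c_\kappa(\breve{A}^+\cap\breve{A}^-)=0$, and the paper explicitly warns (footnote to Definition~\ref{def:nearly}) that the sets $\breve{A}_i$ may fail to form a condenser. You flag this as ``the main obstacle,'' but neither of your proposed fixes closes it: re-proving Theorem~\ref{th:str} under the weakened hypothesis is precisely the work that needs to be done, while deleting a zero-capacity set from $\breve{A}_j$ to restore strict disjointness destroys closedness, which Theorem~\ref{th:str} also requires. (A smaller mismatch: (\ref{ser2}) involves $\inf_{A_i}g_i$, whereas Theorem~\ref{th:str} for $\breve{\mathbf A}$ would need the corresponding condition with $\inf_{\breve{A}_i}g_i$.) The paper sidesteps all of this by arguing directly from the consistency property (C$_2$) applied to \emph{positive scalar} measures: the nets $(R\boldsymbol\mu_t^{\pm})_{t\in T}$ are strongly bounded and vaguely convergent to $R\boldsymbol\mu^{\pm}$ (both facts extracted from the proof of Lemma~\ref{v-r-c}), hence converge weakly, so $R\boldsymbol\mu_t\to R\boldsymbol\mu$ weakly; the Cauchy--Schwarz inequality then gives $\|\boldsymbol\mu_t-\boldsymbol\mu\|_{\mathcal E^+_\kappa(\mathbf A)}\leqslant\liminf_{t'}\|\boldsymbol\mu_t-\boldsymbol\mu_{t'}\|_{\mathcal E^+_\kappa(\mathbf A)}$, which is small by Corollary~\ref{cor:fund}; finally, a strong Cauchy net converges strongly to any of its strong cluster points, so the whole net converges. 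You should replace the appeal to Theorem~\ref{th:str} by this direct (C$_2$) argument.
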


\begin{proof}We use tacitly the notation and assertions from the proof of the preceding lemma.
Being consistent, the kernel $\kappa$ possesses the property (C$_2$)
(see Section~\ref{sec:cons}). The strongly bounded net
$(R\boldsymbol\mu_t^\pm)_{t\in T}\subset\mathcal E^+_\kappa(X)$
therefore converges weakly to its vague limit
$R\boldsymbol\mu^\pm\in\mathcal E^+_\kappa(X)$, which by the
definition of weak convergence implies that $R\boldsymbol\mu_t\to R\boldsymbol\mu$ weakly as $t$ increases along $T$.
By (\ref{seminorm}), this gives
\[\|\boldsymbol\mu_t-\boldsymbol\mu\|^2_{\mathcal E^+_\kappa(\mathbf
A)}=\|R\boldsymbol\mu_t-R\boldsymbol\mu\|^2_\kappa=\lim_{t'\in
T}\,\kappa(R\boldsymbol\mu_t-R\boldsymbol\mu,R\boldsymbol\mu_t-R\boldsymbol\mu_{t'}),\]
and hence, by the Cauchy--Schwarz inequality in $\mathcal E_\kappa(X)$,
\[\|\boldsymbol\mu_t-\boldsymbol\mu\|_{\mathcal E^+_\kappa(\mathbf
A)}\leqslant\liminf_{t'\in
T}\,\|\boldsymbol\mu_t-\boldsymbol\mu_{t'}\|_{\mathcal
E^+_\kappa(\mathbf A)}\text{ \ for all\ }t\in T,\] which establishes the relation
\begin{equation*}\label{conv:str}\lim_{t\in T}\,
\|\boldsymbol\mu_t-\boldsymbol\mu\|_{\mathcal E^+_\kappa(\mathbf
A)}=0,\end{equation*} because
$\|\boldsymbol\mu_t-\boldsymbol\mu_{t'}\|_{\mathcal
E^+_\kappa(\mathbf A)}$ becomes arbitrarily small when $t,t'\in T$
are sufficiently large. Since a strong Cauchy net converges strongly
to any of its strong cluster points, we obtain (\ref{s-srt-conv}) from the last
display.

As for (\ref{sol1}), we first note that the equality and the third
inequality here are valid by (\ref{min-seq:}) and the permanent
assumption (\ref{Gconfin}), respectively. If Case~II takes place,
then the first inequality is obvious by (\ref{yusss}), while the
second inequality holds (with equality prevailing) again by (\ref{yusss}),
applied respectively to $\boldsymbol\mu_s$, $s\in S$, and
$\boldsymbol\mu$, and the subsequent use of (\ref{s-srt-conv}). Assume now
Case~I holds. Applying Lemma~\ref{lemma:lower} to $f_i\in\Psi(X)$, we see from (\ref{i}) after summation over $i\in I$ that\footnote{Note that, while proving
(\ref{fcase1}) in Case~I,
we have not used the assumption of consistency of the kernel.\label{foot-f}}
\begin{equation}\label{fcase1}-\infty<\langle\mathbf f,\boldsymbol\mu\rangle\leqslant\liminf_{t\in T}\,
\langle\mathbf f,\boldsymbol\mu_t\rangle.\end{equation}
The former inequality
here is obvious if $X$ is noncompact, while otherwise it can be
obtained from (\ref{ser2}) and (\ref{g-ineq}) in the same manner as
in the proof of Lemma~\ref{lemma:minusfinite}. Combining (\ref{fcase1}) and (\ref{s-srt-conv})
completes the proof
of~(\ref{sol1}).
\end{proof}

\subsection{Proof of Theorem~\ref{th1}}\label{th1:proof} Fix $(\boldsymbol\mu_s)_{s\in S}\in\mathbb
M^{\boldsymbol\sigma}_{\kappa,\mathbf f}(\mathbf A,\mathbf a,\mathbf
g)$ and $\boldsymbol\mu\in(\boldsymbol\mu_s)_{s\in S}'$ (such
$\boldsymbol\mu$ exists by Lemma~\ref{v-r-c}). For these $(\boldsymbol\mu_s)_{s\in S}$ and $\boldsymbol\mu$, there hold
Lemmas~\ref{v-r-c} and~\ref{l:cons} as well as the assertions
in their proofs.

We assert that $\boldsymbol\mu$ solves Problem~\ref{pr2}. We first show that, actually,
\begin{equation}\label{sol2}\boldsymbol\mu\in\mathcal E_{\kappa,\mathbf
f}^{\boldsymbol\sigma}(\mathbf A,\mathbf a,\mathbf g).\end{equation}
As seen from (\ref{inclusion1}) and (\ref{sol1}), it is enough to prove that for any given $i\in I$,
\begin{equation}\label{g-mass}\langle g_i,\mu^i\rangle=a_i.\end{equation}

It follows from Lemma~\ref{l:cl-q} with
$Q=A_i$ that the $\mu^i_s$, $s\in S$, and $\mu^i$ are carried by $A_i\cap\breve{A}_i$. There
is therefore no loss of generality in replacing each $\xi^i$, $i\in I_0$, by the extension
of $\xi^i|_{A_i\cap\breve{A}_i}$ by $0$ to all of $X$, denoted again by $\xi^i$.\footnote{As $A_i\cap\breve{A}_i$ is $\xi^i$-meas\-ur\-able, $\xi^i|_{A_i\cap\breve{A}_i}$ exists. Given $Q\subset X$, the \emph{extension\/} of $\nu\in\mathfrak M^+(Q)$ by $0$ to all of $X$ is $\tilde{\nu}\in\mathfrak M^+(X)$ determined uniquely by the relation $\tilde{\nu}(\varphi):=\langle\varphi|_Q,\nu\rangle$ for all $\varphi\in C_0(X)$.\label{extension}} Observe that for this (new) $\xi^i$, assumption (\ref{boundd}) remains valid.

Consider the exhaustion of the (closed) set $\breve{A}_i$ by the upper directed family $(K)$
of all compact subsets of $\breve{A}_i$.\footnote{A family $\mathfrak Q$
of sets $Q\subset X$ is said to be {\it upper directed\/} if for any
$Q_1,Q_2\in\mathfrak Q$ there exists $Q_3\in\mathfrak Q$ such that
$Q_1\cup Q_2\subset Q_3$.} Let $(\boldsymbol\mu_t)_{t\in T}$ be a
subnet of $(\boldsymbol\mu_s)_{s\in S}$ converging vaguely to
$\boldsymbol\mu$ (see the proof of Lemma~\ref{v-r-c}). Since the
indicator function $1_K$ of $K$ is upper semicontinuous,
Lemma~\ref{lemma:lower} with $\psi=-g_i1_{K}=-g_i|_K$ and
\cite[Lemma~1.2.2]{F1} yield
\begin{align*}a_i&\geqslant\langle g_i,\mu^i\rangle=\lim_{K\in(K)}\,\langle
g_i,\mu^i|_K\rangle=\lim_{K\in(K)}\,\langle
g_i|_K,\mu^i\rangle\geqslant\limsup_{(t,K)\in T\times(K)}\,\langle
g_i|_K,\mu_t^i\rangle\\&{}=\limsup_{(t,K)\in T\times(K)}\,\langle
g_i,\mu_t^i|_K\rangle=a_i-\liminf_{(t,K)\in
T\times(K)}\,\langle g_i,\mu_t^i|_{\breve{A}_i\setminus K}\rangle,
\end{align*}
$T\times(K)$ being the upper directed product of the upper directed
sets $T$ and $(K)$ \cite[Chapter~2, Section~3]{K}.
The first inequality here holds by (\ref{g-ineq}), while the second and third equalities follow from Lemma~\ref{up-int},
the $\mu_t^i$, $t\in T$, and $\mu^i$ being bounded.
Hence, (\ref{g-mass}) will be established once we have proven
\begin{equation}\label{g0}
\liminf_{(t,K)\in T\times(K)}\,\langle g_i,\mu_t^i|_{\breve{A}_i\setminus
K}\rangle=\liminf_{(t,K)\in T\times(K)}\,\langle g_i|_{\breve{A}_i\setminus
K},\mu_t^i\rangle=0,
\end{equation}
the former equality here being valid again according to Lemma~\ref{up-int}.

Assume first that $i\in I_0$. Since by (\ref{boundd}) and
\cite[Lemma~1.2.2]{F1}
\[\infty>\langle g_i,\xi^i\rangle=\lim_{K\in(K)}\,\langle g_i,\xi^i|_K\rangle,\]
we have
\[\lim_{K\in(K)}\,\langle g_i,\xi^i|_{\breve{A}_i\setminus K}\rangle=0.\]
When combined with
\[\langle g_i,\mu_t^i|_{\breve{A}_i\setminus K}\rangle
\leqslant\langle g_i,\xi^i|_{\breve{A}_i\setminus K}\rangle\text{ \ for
all\ }t\in T,\]
this implies the latter equality in (\ref{g0}) and, hence,
(\ref{g-mass}).

Let now $i\in I\setminus I_0$. Assuming first that $\breve{A}_i$ is
compact, we obtain (\ref{g-mass}) from (\ref{i}) in view of the continuity of $g_i$. In fact, there is $\varphi_i\in C_0(X)$ such that
$\varphi_i|_{\breve{A}_i}=g_i|_{\breve{A}_i}$. Since $\breve{A}_i^c$ is $\nu$-neg\-lig\-ible for any $\nu\in\mathfrak M^+(\breve{A}_i)$, we thus get
\begin{equation}\label{compact}a_i=\lim_{t\in T}\,\langle g_i,\mu_t^i\rangle=\lim_{t\in T}\,\langle\varphi_i,\mu_t^i\rangle=\langle\varphi_i,\mu^i\rangle=\langle g_i,\mu^i\rangle.\end{equation}

Assume next $\breve{A}_i$ is noncompact. Then, by the stated hypotheses,
\begin{equation}c_\kappa(\breve{A}_i)<\infty.\label{capfinite}\end{equation}
Since the kernel $\kappa$ is consistent, for every $Q\subset\breve{A}_i$
there exists an interior equilibrium measure $\gamma_Q$
\cite[Theorem~4.1]{F1}. Recall that, if $\Gamma_Q$ denotes the convex cone of all
$\nu\in\mathcal E_\kappa(X)$ with
$\kappa(x,\nu)\geqslant1$ n.e.\ on $Q$, then $\gamma_Q\in\Gamma_Q$,
i.e.,
\begin{equation}
\kappa(x,\gamma_Q)\geqslant1\text{ \ n.e.\ on\ }Q, \label{6}
\end{equation}
and moreover
\begin{equation}
c_\kappa(Q)=\|\gamma_Q\|_\kappa^2=\min_{\nu\in\Gamma_Q}\,\|\nu\|^2_\kappa.
\label{5}
\end{equation}

Observe that there is no loss of generality in assuming $g_i$ to
satisfy (\ref{growth}) with some $r_i\in(1,\infty)$ and
$\nu_i\in\mathcal E_\kappa(X)$. Indeed, otherwise $g_i$ must be
bounded from above by $M\in(0,\infty)$, which combined with
(\ref{6}) for $Q=\breve{A}_i$ results in (\ref{growth}) with
$\nu_i:=M^{r_i}\gamma_{\breve{A}_i}$, $r_i\in(1,\infty)$ being arbitrary.

Consider interior equilibrium measures $\gamma_{\breve{A}_i\setminus K}$ and
$\gamma_{\breve{A}_i\setminus K'}$, where $K,K'\in(K)$. Because of~(\ref{6})
and~(\ref{5}), we obtain from \cite[Lemma~4.1.1]{F1}
\[
\|\gamma_{\breve{A}_i\setminus K}-\gamma_{\breve{A}_i\setminus
K'}\|_\kappa^2\leqslant\|\gamma_{\breve{A}_i\setminus
K}\|_\kappa^2-\|\gamma_{\breve{A}_i\setminus K'}\|_\kappa^2\text{ \ whenever
\ }K\subset K'.\] As seen from (\ref{capfinite}) and (\ref{5}), the
net $(\|\gamma_{\breve{A}_i\setminus K}\|_\kappa)_{K\in(K)}$ is bounded and
decreasing, and hence it is Cauchy in $\mathbb R$. The preceding
inequality thus shows that the net $(\gamma_{\breve{A}_i\setminus
K})_{K\in(K)}$ is strong Cauchy in $\mathcal E^+_\kappa(X)$. Since,
clearly, this net converges vaguely to zero, the property (C$_1$)
implies that zero is also one of its strong limits. Hence,
\begin{equation}
\lim_{K\in(K)}\,\|\gamma_{\breve{A}_i\setminus K}\|_\kappa=0. \label{27}
\end{equation}

Write $q_i:=r_i(r_i-1)^{-1}$, where $r_i\in(1,\infty)$ is the number
involved in condition (\ref{growth}). Combining (\ref{growth}) with
(\ref{6}) shows that the inequality
\[
g_i(x)1_{\breve{A}_i\setminus K}(x)\leqslant\kappa(x,\nu_i)^{1/r_i}
\kappa(x,\gamma_{\breve{A}_i\setminus K})^{1/q_i}\]  holds n.e.\ on $\breve{A}_i$,
and hence $\mu_t^i$-a.e.\ on $X$ (see Lemma~\ref{a.e.}). Having integrated this relation with respect to
$\mu_t^i$, we then apply the H\"older and, subsequently, the
Cauchy--Schwarz inequalities to the integrals on the right. This
gives
\begin{align*}\langle
g_i1_{\breve{A}_i\setminus
K},\mu_t^i\rangle&\leqslant\Bigl[\int\kappa(x,\nu_i)\,d\mu_t^i(x)\Bigr]^{1/r_i}\,
\Bigl[\int\kappa(x,\gamma_{\breve{A}_i\setminus K})\,d\mu_t^i(x)\Bigr]^{1/q_i}\\
{}&\leqslant \|\nu_i\|_\kappa^{1/r_i}\,\|\gamma_{\breve{A}_i\setminus
K}\|_\kappa^{1/q_i}\,\|\mu_t^i\|_\kappa.\end{align*} Taking limits
here as $(t,K)$ increases along $T\times(K)$ and using
(\ref{net:boundpmi}) and (\ref{27}), we again obtain the latter equality in (\ref{g0}), and
hence~(\ref{g-mass}).

Having thus proven (\ref{sol2}), we get $G_{\kappa,\mathbf
f}(\boldsymbol\mu)\geqslant G^{\boldsymbol\sigma}_{\kappa,\mathbf
f}(\mathbf A,\mathbf a,\mathbf g)$. Since the converse inequality
holds by (\ref{sol1}), $\boldsymbol\mu$ is a solution to
Problem~\ref{pr2}, i.e., $\boldsymbol\mu\in\mathfrak
S^{\boldsymbol\sigma}_{\kappa,\mathbf f}(\mathbf A,\mathbf a,\mathbf
g)$. As $(\boldsymbol\mu_s)_{s\in S}\in\mathbb
M^{\boldsymbol\sigma}_{\kappa,\mathbf f}(\mathbf A,\mathbf a,\mathbf
g)$ and $\boldsymbol\mu\in(\boldsymbol\mu_s)'_{s\in S}$ have been
chosen arbitrarily, we obtain
\[\bigcup_{(\boldsymbol\nu_t)_{t\in
T}\in\mathbb M^{\boldsymbol\sigma}_{\kappa,\mathbf f}(\mathbf
A,\mathbf a,\mathbf g)}\,(\boldsymbol\nu_t)_{t\in
T}'\subset\mathfrak S^{\boldsymbol\sigma}_{\kappa,\mathbf f}(\mathbf
A,\mathbf a,\mathbf g).\] The converse inclusion is
obvious because the trivial net
$(\boldsymbol\lambda^{\boldsymbol\sigma}_{\mathbf A})$, where
$\boldsymbol\lambda^{\boldsymbol\sigma}_{\mathbf A}$ is any element of $\mathfrak
S^{\boldsymbol\sigma}_{\kappa,\mathbf f}(\mathbf A,\mathbf a,\mathbf
g)$, is minimizing and converges vaguely to
$\boldsymbol\lambda^{\boldsymbol\sigma}_{\mathbf A}$. Thus,
(\ref{desc-solv}) indeed holds.

Any net in $\mathfrak S^{\boldsymbol\sigma}_{\kappa,\mathbf
f}(\mathbf A,\mathbf a,\mathbf g)$ is obviously minimizing, and hence, according to (\ref{desc-solv}), any of its vague cluster points again belongs to $\mathfrak S^{\boldsymbol\sigma}_{\kappa,\mathbf f}(\mathbf
A,\mathbf a,\mathbf g)$. This
establishes the vague compactness of $\mathfrak
S^{\boldsymbol\sigma}_{\kappa,\mathbf f}(\mathbf A,\mathbf a,\mathbf
g)$. Choosing finally any $(\boldsymbol\nu_t)_{t\in T}\in\mathbb
M^{\boldsymbol\sigma}_{\kappa,\mathbf f}(\mathbf A,\mathbf a,\mathbf
g)$ and $\boldsymbol\lambda^{\boldsymbol\sigma}_{\mathbf
A}\in\mathfrak S^{\boldsymbol\sigma}_{\kappa,\mathbf f}(\mathbf
A,\mathbf a,\mathbf g)$, we arrive at (\ref{min-conv-str}) by
combining (\ref{s-srt-conv}) with Lemmas~\ref{l:unique}
and~\ref{l:fund}. The proof is complete.

\subsection{Proof of Theorem~\ref{th2}}\label{sec:pr:th2} Under the stated hypotheses, there is no loss of generality in assuming $I^+$ to be finite. Then
$\breve{A}^+$ is compact, for $\breve{A}_i$,
$i\in I$, are so. Fix $\varepsilon>0$. As seen from the property $(\infty_X)$, there exists a compact set $K\subset X$ such that
\begin{equation}\label{th2:eps}\kappa(x,y)<\frac{\varepsilon}{3C^2}\text{ \ for all\ }x\in\breve{A}^+, \ y\in K^c,\end{equation}
where $C\in(0,\infty)$ is given by (\ref{ser2}).
Since $\breve{A}^-\cap K$ is compact, it follows from (\ref{th2:disj}) and the (finite) continuity of $\kappa(x,y)$ for $x\ne y$ that
$\kappa|_{\breve{A}^+\times(\breve{A}^-\cap K)}$ is upper bounded.
This together with (\ref{th2:eps}) yields (\ref{bound}). Since (\ref{ser2}) holds by assumption, we are thus
able to use Lemma~\ref{v-r-c} as well as the assertions
established in the course of its proof.

According to Lemma~\ref{l:cl-q} with $Q=A_i$, for
every $\boldsymbol\nu\in\mathcal E^+_\kappa(\mathbf A)$ we have $\nu^i\in\mathcal E^+_\kappa(A_i\cap\breve{A}_i)$ for all $i\in I$.
There is therefore no loss of generality in replacing each $\xi^i$, $i\in I_0$, by the extension
of $\xi^i|_{A_i\cap\breve{A}_i}$ by $0$ to all of $X$ (cf.\ footnote~\ref{extension}), denoted again by $\xi^i$.
After having done this, we next
replace (again with the notation preserved) the $A_i$, $i\in I$, by the
(compact) sets $\breve{A}_i$, which again involves no loss of
generality.

By (\ref{inclusion1}), any vague cluster point $\boldsymbol\mu$ of
any $(\boldsymbol\mu_s)_{s\in S}\in\mathbb
M^{\boldsymbol\sigma}_{\kappa,\mathbf f}(\mathbf A,\mathbf a,\mathbf
g)$ belongs to $\mathcal
E_\kappa^{\boldsymbol\sigma}(\mathbf A,\leqslant\!\mathbf a,\mathbf
g)$. Choose a subsequence $\{\boldsymbol\mu_k\}_{k\in\mathbb N}$ of
$(\boldsymbol\mu_s)_{s\in S}$ that converges vaguely to
$\boldsymbol\mu$. Since $g_i$ is continuous and $A_i$ is compact,
equality prevails in (\ref{g-ineq}) (cf.\ (\ref{compact})), and hence
\begin{equation}\label{inclll}\boldsymbol\mu\in\mathcal
E_\kappa^{\boldsymbol\sigma}(\mathbf A,\mathbf a,\mathbf g).\end{equation}
Thus, $|R\boldsymbol\mu|(X)\leqslant C$, cf.\ (\ref{m2}), and
the above $K$ can be chosen so that
\begin{equation}\label{th2:nonint}|R\boldsymbol\mu^-|(\partial_XK)=0.\end{equation}

We next use the fact that the map $(\nu_1,\nu_2)\mapsto\nu_1\otimes\nu_2$
from $\mathfrak M^+(X)\times\mathfrak M^+(X)$ into
$\mathfrak M^+(X\times X)$ is vaguely continuous \cite[Chapter~3, Section~5, Exercise~5]{B2}.
Applying
Lemma~\ref{lemma:lower} to $\kappa\in\Psi(X\times X)$, we therefore obtain
\begin{equation}\label{th2:eq1}\kappa(R\boldsymbol\mu^\pm,R\boldsymbol\mu^\pm)\leqslant
\liminf_{k\to\infty}\,\kappa(R\boldsymbol\mu_k^\pm,R\boldsymbol\mu_k^\pm).\end{equation}
Furthermore,
\begin{align}\label{al1}|\kappa(R\boldsymbol\mu^+,R\boldsymbol\mu^-)-\kappa(R\boldsymbol\mu_k^+,R\boldsymbol\mu_k^-)|&\leqslant
|\kappa(R\boldsymbol\mu^+,R\boldsymbol\mu^-|_K)-\kappa(R\boldsymbol\mu_k^+,R\boldsymbol\mu_k^-|_K)|\\
\notag{}&+|\kappa(R\boldsymbol\mu^+,R\boldsymbol\mu^-|_{K^c})|+
|\kappa(R\boldsymbol\mu_k^+,R\boldsymbol\mu_k^-|_{K^c})|.\end{align}
As seen from (\ref{th2:eps}) and (\ref{m2}), each of the last two summands on the right-hand side in (\ref{al1}) is ${}<\varepsilon/3$.
Since $\kappa|_{A^+\times(A^-\cap K)}$ is continuous on the (compact) space $A^+\times(A^-\cap K)$ and $R\boldsymbol\mu_k^+\otimes(R\boldsymbol\mu_k^-|_{K})\to R\boldsymbol\mu^+\otimes(R\boldsymbol\mu^-|_{K})$ vaguely, the latter being clear from Theorem~\ref{Portmanteau} in view of (\ref{th2:nonint}), there exists $k_0\in\mathbb N$ such that for all $k\geqslant k_0$, the first summand in (\ref{al1}) is ${}<\varepsilon/3$.
Altogether,
\[\kappa(R\boldsymbol\mu^+,R\boldsymbol\mu^-)=
\lim_{k\to\infty}\,\kappa(R\boldsymbol\mu_k^+,R\boldsymbol\mu_k^-),\]
for $\varepsilon$ has been chosen arbitrarily.
Combining this with (\ref{th2:eq1}) and then substituting (\ref{vecen}) and (\ref{pos-en}) into the
inequality obtained yields
\[0\leqslant\kappa(\boldsymbol\mu,\boldsymbol\mu)\leqslant
\liminf_{k\to\infty}\,\kappa(\boldsymbol\mu_k,\boldsymbol\mu_k).\]

Since Case~I takes place, in view of footnote~\ref{foot-f} we obtain
(\ref{fcase1}), which together with the last display establishes the
relation
\[-\infty<G_{\kappa,\mathbf f}(\boldsymbol\mu)\leqslant
\liminf_{k\to\infty}\,G_{\kappa,\mathbf f}(\boldsymbol\mu_k)=
G^{\boldsymbol\sigma}_{\kappa,\mathbf f}(\mathbf A,\mathbf a,\mathbf
g)<\infty.\] The equality and the third inequality here are valid by
(\ref{min-seq:}) and the permanent assumption (\ref{Gconfin}),
respectively. In view of (\ref{inclll}), we thus actually have
$\boldsymbol\mu\in\mathcal E_{\kappa,\mathbf
f}^{\boldsymbol\sigma}(\mathbf A,\mathbf a,\mathbf g)$, and
therefore $G_{\kappa,\mathbf f}(\boldsymbol\mu)\geqslant
G^{\boldsymbol\sigma}_{\kappa,\mathbf f}(\mathbf A,\mathbf a,\mathbf
g)$. This together with the preceding display shows that
$\boldsymbol\mu$ is in fact a solution to Problem~\ref{pr2}.

It has thus been proven that any vague cluster point (which exists)
of any minimizing net (sequence) belongs to $\mathfrak S_{\kappa,\mathbf
f}^{\boldsymbol\sigma}(\mathbf A,\mathbf a,\mathbf g)$. In the same
way as it has been done at the end of the proof of
Theorem~\ref{th1}, this implies (\ref{desc-solv}) as well as the vague compactness of $\mathfrak S_{\kappa,\mathbf
f}^{\boldsymbol\sigma}(\mathbf A,\mathbf a,\mathbf g)$. The proof is complete.

\begin{remark}\label{rem:th2}Assume the conditions of footnote~\ref{foot:Part} hold. Then the corresponding version of Theorem~\ref{th2} can be proven as above (of course, with a subnet $(\boldsymbol\mu_t)_{t\in T}$ in place of a subsequence), the only difference being in the fact that Theorem~\ref{Portmanteau} may fail. Instead, choose a compact set $K$ so that $\breve{A}^+\cap \partial_XK=\varnothing$. Since this $K$ has points in common with only finitely many $\breve{A}_i$, $i\in I^+$, $(R\boldsymbol\mu^+_t|_K)_{t\in T}$ again converges vaguely to $R\boldsymbol\mu^+|_K$. Having reversed the roles of `$+$' and `$-$', we arrive at our claim.\end{remark}

\subsection{Proof of Theorem~\ref{th3}}\label{th3:proof} In the same manner as this has been done in the proof of Theorem~\ref{th2}, we see that there is no
loss of generality in replacing each $\xi^i$, $i\in I_0$, by the
extension of $\xi^i|_{A_i\cap\breve{A}_i}$ by $0$ to all of $X$
(denoted again by $\xi^i$).

We begin by showing that under the stated hypotheses the potential
$\kappa(\cdot,\nu^i)$ of any $\nu^i\in\mathfrak M^+(\breve{A}_i)$, $i\in I$,
such that $\nu^i\leqslant\sigma^i$ is continuous on $X$.
Let first $i\in I_0$. Being relatively continuous on $\breve{A}_i\supset
S(\xi^i)$ by assumption, $\kappa(\cdot,\xi^i)$ is continuous on $X$
by the regularity of the kernel. Since
$\kappa(\cdot,\nu^i)$ is l.s.c.\ and since
$\kappa(\cdot,\nu^i)=\kappa(\cdot,\xi^i)-\kappa(\cdot,\xi^i-\nu^i)$
with $\kappa(\cdot,\xi^i)$ continuous and
$\kappa(\cdot,\xi^i-\nu^i)$ l.s.c., $\kappa(\cdot,\nu^i)$ is also
upper semicontinuous, hence continuous.
Let now $i\in I\setminus
I_0$. Since $-\kappa|_{\breve{A}_i\times\breve{A}_i}$ is continuous by assumption,
$-\kappa(x,y)\geqslant-c$ for all $(x,y)\in\breve{A}_i\times\breve{A}_i$, where $c\in(0,\infty)$. Integrating this inequality with respect to the (bounded)
$\nu^i\in\mathfrak M^+(\breve{A}_i)$, we observe that $\kappa(\cdot,\nu^i)$ is relatively upper semicontinuous on
$\breve{A}_i$. Being also l.s.c.\ on $X$, it is relatively continuous on
$\breve{A}_i\supset S(\nu^i)$, and hence on all of $X$, again by the regularity of the kernel~$\kappa$.

Choose any $(\boldsymbol\mu_s)_{s\in S}\in\mathbb
M^{\boldsymbol\sigma}_{\kappa,\mathbf f}(\mathbf A,\mathbf a,\mathbf
g)$, which exists by the (permanent) assumption
(\ref{Gconfin}). By Lemma~\ref{l:cl-q} with $Q=A_i$, $i\in I$, we have
$(\mu_s^i)_{s\in S}\subset\mathcal E_\kappa^+(\breve{A}_i)$. Since $g_i$ is continuous and strictly positive,
while $\breve{A}_i$ is compact,
\[\mu_s^i(X)\leqslant a_i\bigl[\min_{x\in\breve{A}_i}\,g_i(x)\bigr]^{-1}<\infty\text{ \ for all\ }s\in S.\]
Therefore, $(\boldsymbol\mu_s)_{s\in S}$ is bounded and, hence, relatively compact in the vague topology on $\mathfrak M^+(X)^{\mathrm{Card}\,I}$ (Lemma~\ref{lem:vaguecomp}).
Fix any of its vague cluster points
$\boldsymbol\mu=(\mu^i)_{i\in I}\in\mathfrak M^+(X)^{\mathrm{Card}\,I}$, and choose a subnet $(\boldsymbol\mu_t)_{t\in T}$
of $(\boldsymbol\mu_s)_{s\in S}$ converging vaguely to $\boldsymbol\mu$.
Since $\mathfrak M^{\sigma^i}(\breve{A}_i,a_i,g_i)$ is vaguely closed (cf.\ (\ref{compact})),
\[\mu^i\in\mathfrak M^{\sigma^i}(\breve{A}_i,a_i,g_i)\text{ \ for all\ }i\in I.\]
As shown in the second paragraph of the present proof, $\kappa(\cdot,\mu^i)$ is continuous on $X$, and hence bounded on the (compact)
$\breve{A}_i$. Combined with $\mu^i(\breve{A}_i)<\infty$, this gives $\mu^i\in\mathcal E^+_\kappa(\breve{A}_i)$.
By Lemma~\ref{l:cl-q} for $Q=A_i$ and the preceding display, we thus get $\boldsymbol\mu\in\mathcal E_\kappa^{\boldsymbol\sigma}(\mathbf A,\mathbf a,\mathbf g)$ ($I$ being finite).

Furthermore, since every $\kappa(\cdot,\mu_t^i)$, $t\in T$, is likewise continuous on $X$,
\begin{align*}\lim_{t\in T}\,\lim_{t'\in T}\,\kappa(\mu_t^i,\mu_{t'}^j)&=
\lim_{t\in T}\,\lim_{t'\in T}\,\int\kappa(\cdot,\mu_t^i)\,d\mu_{t'}^j=
\lim_{t\in T}\,\int\kappa(\cdot,\mu_t^i)\,d\mu^j\\{}&=\lim_{t\in T}\,\int\kappa(\cdot,\mu^j)\,d\mu_t^i=\kappa(\mu^j,\mu^i)\text{ \ for all\ }i,j\in I.\end{align*}
Summing up these equalities, multiplied by $s_is_j$, over all $i,j\in I$ shows that $R\boldsymbol\mu_t\to R\boldsymbol\mu$ strongly in $\mathcal E_\kappa(X)$; and hence, by (\ref{seminorm}),
\[\lim_{t\in T}\,\|\boldsymbol\mu_t-\boldsymbol\mu\|_{\mathcal E^+_\kappa(\mathbf A)}=0.\]
Since a strong Cauchy net converges strongly
to any of its strong cluster points, we see that $(\boldsymbol\mu_s)_{s\in S}$ converges to $\boldsymbol\mu$ strongly in $\mathcal E^+_\kappa(\mathbf A)$, which is~(\ref{s-srt-conv}).

Applying now to $(\boldsymbol\mu_s)_{s\in S}$ and $\boldsymbol\mu$ the same arguments as in the last paragraph of the proof of Lemma~\ref{l:cons}, we arrive at (\ref{sol1}). Hence, $\boldsymbol\mu\in\mathcal E_{\kappa,\mathbf f}^{\boldsymbol\sigma}(\mathbf A,\mathbf a,\mathbf g)$.
The rest of the proof repeats word by word the last two paragraphs in the proof of Theorem~\ref{th1}.

\subsection{Proof of Corollary~\ref{cor:v:conv}}\label{cor:proof} Let the assumptions of any of
Theorems~\ref{th1}, \ref{th2}, or \ref{th3} be fulfilled. As seen from
these theorems, the class $\mathfrak S_{\kappa,\mathbf
f}^{\boldsymbol\sigma}(\mathbf A,\mathbf a,\mathbf g)$ of the
solutions to Problem~\ref{pr2} is then nonempty and given
by~(\ref{desc-solv}).

Assume moreover that the kernel $\kappa$ is strictly positive
definite, while the $A_i$, $i\in I$, are mutually essentially disjoint. By
Corollary~\ref{c:unique}, a solution to Problem~\ref{pr2} is then
unique, which implies in view of (\ref{desc-solv}) that the vague
cluster sets of the minimizing nets are identical to one another,
and all these reduce to a unique
$\boldsymbol\lambda^{\boldsymbol\sigma}_{\mathbf A}\in\mathfrak
S_{\kappa,\mathbf f}^{\boldsymbol\sigma}(\mathbf A,\mathbf a,\mathbf
g)$. Since the vague topology on $\mathfrak M^+(X)^{\mathrm{Card}\,I}$ is Hausdorff,
$\boldsymbol\lambda^{\boldsymbol\sigma}_{\mathbf A}$ must be
the vague limit of every $(\boldsymbol\nu_t)_{t\in T}\in\mathbb
M^{\boldsymbol\sigma}_{\kappa,\mathbf f}(\mathbf A,\mathbf a,\mathbf
g)$ \cite[Chapter~I, Section~9, n$^\circ$\,1]{B1}, as was to be
proven.

\section{The weighted potentials of the solutions to Problem~\ref{pr2}}

For any $\boldsymbol\nu\in\mathcal E^+_\kappa(\mathbf A)$ we denote by $W^{\boldsymbol\nu,i}_{\kappa,\mathbf f}$, $i\in I$, the $i$-component of the $\mathbf f$-weigh\-ted vector
potential $W^{\boldsymbol\nu}_{\kappa,\mathbf f}$ (cf.\ Eq.~(\ref{wpot})).

\begin{lemma}\label{aux43} $\boldsymbol\lambda\in\mathcal E^{\boldsymbol\sigma}_{\kappa,\mathbf f}(\mathbf A,\mathbf a,\mathbf g)$ solves Problem\/~{\rm\ref{pr2}} if and only if
\begin{equation}\label{aux431}\sum_{i\in I}\,\bigl\langle W^{\boldsymbol\lambda,i}_{\kappa,\mathbf f},\nu^i-\lambda^i\bigr\rangle\geqslant0\text{ \ for all\ }\boldsymbol\nu\in\mathcal E^{\boldsymbol\sigma}_{\kappa,\mathbf f}(\mathbf A,\mathbf a,\mathbf g).\end{equation}
\end{lemma}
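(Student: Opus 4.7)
The strategy is to exploit two structural facts already in place: the admissible class $\mathcal E^{\boldsymbol\sigma}_{\kappa,\mathbf f}(\mathbf A,\mathbf a,\mathbf g)$ is convex (cf.\ the observation after (\ref{Gconfin}), in view of Remark~\ref{rem-convex}), and, via the $R$-iso\-met\-ry into the pre-Hilbert space $\mathcal E_\kappa(X)$, the functional $G_{\kappa,\mathbf f}$ splits as a quadratic form plus a linear functional. The lemma then reduces to the standard Hilbert-space variational characterization of minimizers over a convex set, once the linear part of the directional derivative of $G_{\kappa,\mathbf f}$ at $\boldsymbol\lambda$ along $\boldsymbol\nu-\boldsymbol\lambda$ is identified with $\sum_{i\in I}\bigl\langle W^{\boldsymbol\lambda,i}_{\kappa,\mathbf f},\nu^i-\lambda^i\bigr\rangle$.

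First I will establish the algebraic expansion along the segment. For $\boldsymbol\lambda,\boldsymbol\nu\in\mathcal E^{\boldsymbol\sigma}_{\kappa,\mathbf f}(\mathbf A,\mathbf a,\mathbf g)$ and $t\in[0,1]$, set $\boldsymbol\mu_t:=(1-t)\boldsymbol\lambda+t\boldsymbol\nu$, which is admissible by convexity. Writing $R\boldsymbol\mu_t=R\boldsymbol\lambda+t(R\boldsymbol\nu-R\boldsymbol\lambda)$ and expanding $\|R\boldsymbol\mu_t\|_\kappa^2$ bilinearly in $\mathcal E_\kappa(X)$, then translating back through (\ref{sem-norm}) and (\ref{vecen}), and combining with the obvious linear decomposition of $\langle\mathbf f,\boldsymbol\mu_t\rangle$, I arrive at
\[G_{\kappa,\mathbf f}(\boldsymbol\mu_t)-G_{\kappa,\mathbf f}(\boldsymbol\lambda)=2t\bigl[\kappa(\boldsymbol\lambda,\boldsymbol\nu-\boldsymbol\lambda)+\langle\mathbf f,\boldsymbol\nu-\boldsymbol\lambda\rangle\bigr]+t^2\|R\boldsymbol\nu-R\boldsymbol\lambda\|_\kappa^2.\]
Next I will identify the bracketed quantity with the sum in (\ref{aux431}). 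Since $W^{\boldsymbol\lambda,i}_{\kappa,\mathbf f}=\kappa^i_{\boldsymbol\lambda}+f_i$ and $\kappa^i_{\boldsymbol\lambda}(x)=\sum_{j\in I}s_is_j\kappa(x,\lambda^j)$ by (\ref{Rpot}), a Fubini-type interchange---justified by the absolute convergence of $\sum_{i,j}|s_is_j\kappa(\lambda^i,\nu^j)|$ supplied by Corollary~\ref{relation}---yields $\sum_{i\in I}\langle\kappa^i_{\boldsymbol\lambda},\nu^i\rangle=\kappa(\boldsymbol\lambda,\boldsymbol\nu)$, and analogously with $\boldsymbol\nu$ replaced by $\boldsymbol\lambda$. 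Subtraction gives precisely $\sum_{i\in I}\bigl\langle W^{\boldsymbol\lambda,i}_{\kappa,\mathbf f},\nu^i-\lambda^i\bigr\rangle=\kappa(\boldsymbol\lambda,\boldsymbol\nu-\boldsymbol\lambda)+\langle\mathbf f,\boldsymbol\nu-\boldsymbol\lambda\rangle$.

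With these two identities in hand, the equivalence drops out. For necessity, if $\boldsymbol\lambda$ is a minimizer then the left-hand side of the displayed expansion is $\geqslant0$ for each $t\in(0,1]$; dividing by $2t$ and letting $t\to0^+$ yields (\ref{aux431}). For sufficiency, taking $t=1$ in the expansion gives
\[G_{\kappa,\mathbf f}(\boldsymbol\nu)-G_{\kappa,\mathbf f}(\boldsymbol\lambda)=2\sum_{i\in I}\bigl\langle W^{\boldsymbol\lambda,i}_{\kappa,\mathbf f},\nu^i-\lambda^i\bigr\rangle+\|R\boldsymbol\nu-R\boldsymbol\lambda\|_\kappa^2\geqslant0,\]
the first summand being nonnegative by hypothesis and the second by positive definiteness of $\kappa$. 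There is no serious obstacle here; the only delicate point is the Fubini interchange used to identify the directional derivative with the vector-potential pairing, and that is dispatched uniformly by the absolute convergence supplied by Corollary~\ref{relation}.
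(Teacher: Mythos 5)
Your proposal is correct and follows essentially the same route as the paper: the quadratic expansion of $G_{\kappa,\mathbf f}$ along the segment $h\boldsymbol\nu+(1-h)\boldsymbol\lambda$ (justified by the convexity of $\mathcal E^{\boldsymbol\sigma}_{\kappa,\mathbf f}(\mathbf A,\mathbf a,\mathbf g)$ and the $R$-isometry into $\mathcal E_\kappa(X)$), then $h\to0^+$ for necessity and $h=1$ for sufficiency. The only difference is that you spell out the identification of the linear term with $\sum_{i\in I}\langle W^{\boldsymbol\lambda,i}_{\kappa,\mathbf f},\nu^i-\lambda^i\rangle$, which the paper dispatches as a ``straightforward verification''; your appeal to the absolute convergence from Corollary~\ref{relation} is the right justification for that step.
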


\begin{proof}For any $\boldsymbol\mu,\boldsymbol\nu\in\mathcal E^{\boldsymbol\sigma}_{\kappa,\mathbf f}(\mathbf A,\mathbf a,\mathbf g)$ and $h\in(0,1]$, we obtain by straightforward verification
\[G_{\kappa,\mathbf f}\bigl(h\boldsymbol\nu+(1-h)\boldsymbol\mu\bigr)-G_{\kappa,\mathbf f}(\boldsymbol\mu)=2h\sum_{i\in I}\,\bigl\langle W^{\boldsymbol\mu,i}_{\kappa,\mathbf f},\nu^i-\mu^i\bigr\rangle+h^2\|\boldsymbol\nu-\boldsymbol\mu\|^2_{\mathcal E^+_\kappa(\mathbf A)}.\]
If $\boldsymbol\mu=\boldsymbol\lambda$ solves Problem~\ref{pr2}, then the left-hand (hence, the right-hand) side of this display is $\geqslant0$, for the class
$\mathcal E^{\boldsymbol\sigma}_{\kappa,\mathbf f}(\mathbf A,\mathbf a,\mathbf g)$ is convex, which leads to (\ref{aux431}) by letting $h\to0$.
Conversely, if (\ref{aux431}) holds, then the preceding formula with $\boldsymbol\mu=\boldsymbol\lambda$ and $h=1$ yields $G_{\kappa,\mathbf f}(\boldsymbol\nu)\geqslant G_{\kappa,\mathbf f}(\boldsymbol\lambda)$  for all $\boldsymbol\nu\in\mathcal E^{\boldsymbol\sigma}_{\kappa,\mathbf f}(\mathbf A,\mathbf a,\mathbf g)$, and hence $\boldsymbol\lambda\in\mathfrak S^{\boldsymbol{\sigma}}_{\kappa,\mathbf f}(\mathbf A,\mathbf a,\mathbf g)$.
\end{proof}

We next provide a description of the weighted potentials of the solutions to Problem~\ref{pr2} and single out their characteristic properties. The
permanent assumptions stated in Sections~\ref{sec:form} and~\ref{sec:perm} above are required.

\begin{theorem}\label{th:desc} Let the\/ $A_i$, $i\in I$, be nearly closed, and let\/ {\rm(\ref{ser2})}, {\rm(\ref{bound})}, and
\begin{equation}\label{as2}\sup_{x\in\breve{A}_i}\,g_i(x)<\infty\text{ \ for all\ }i\in I\end{equation}
hold.  Assume also that for every\/ $i\in I_0$,
\begin{equation}\label{as1'}\xi^i|_K\in\mathcal E^+_\kappa(X)\text{ \ for every compact\ }K\subset A^\circ_i,\end{equation}
\begin{equation}\label{as1}\xi^i(A_i\setminus A^\circ_i)=0,\end{equation}
$A^\circ_i$ being defined in\/ {\rm(\ref{nec})}. If, moreover, the\/ $f_i|_{\breve{A}_i}$, $i\in I$, are lower bounded,\footnote{If Case~I holds, then the $f_i$, $i\in I$, are necessarily lower bounded on $X$.}
then for any\/ $\boldsymbol\lambda\in\mathcal E^{\boldsymbol\sigma}_{\kappa,\mathbf f}(\mathbf A,\mathbf a,\mathbf g)$ the following two assertions are equivalent:
\begin{itemize}
\item[{\rm(i)}] $\boldsymbol\lambda\in\mathfrak S^{\boldsymbol{\sigma}}_{\kappa,\mathbf f}(\mathbf A,\mathbf a,\mathbf g)$.
\item[{\rm(ii)}] There exists\/ $(w_{\boldsymbol\lambda}^i)_{i\in I}\in\mathbb R^{{\rm Card}\,I}$ such that for all\/ $i\in I_0$,
\begin{align}
\label{b1}W^{\boldsymbol\lambda,i}_{\kappa,\mathbf f}&\geqslant w_{\boldsymbol\lambda}^ig_i\text{ \ $(\xi^i-\lambda^i)$-a.e.},\\
\label{b2}W^{\boldsymbol\lambda,i}_{\kappa,\mathbf f}&\leqslant w_{\boldsymbol\lambda}^ig_i\text{ \ $\lambda^i$-a.e.},
\end{align}
while for all\/ $i\in I\setminus I_0$,
\begin{align}
\label{b3}W^{\boldsymbol\lambda,i}_{\kappa,\mathbf f}&\geqslant w_{\boldsymbol\lambda}^ig_i\text{ \ n.e. on\ }A_i,\\
\label{b4}W^{\boldsymbol\lambda,i}_{\kappa,\mathbf f}&=w_{\boldsymbol\lambda}^ig_i\text{ \ $\lambda^i$-a.e.}
\end{align}
\end{itemize}
\end{theorem}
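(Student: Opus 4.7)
The key is Lemma~\ref{aux43}, which rephrases optimality of $\boldsymbol\lambda\in\mathcal E^{\boldsymbol\sigma}_{\kappa,\mathbf f}(\mathbf A,\mathbf a,\mathbf g)$ as the single variational inequality $\sum_{i\in I}\langle W^{\boldsymbol\lambda,i}_{\kappa,\mathbf f},\nu^i-\lambda^i\rangle\geqslant0$ for every admissible $\boldsymbol\nu$. Since $\langle g_i,\nu^i-\lambda^i\rangle=0$ for each $i$, any real numbers $w^i_{\boldsymbol\lambda}$ may be absorbed by replacing $W^{\boldsymbol\lambda,i}_{\kappa,\mathbf f}$ with $W^{\boldsymbol\lambda,i}_{\kappa,\mathbf f}-w^i_{\boldsymbol\lambda}g_i$ inside the sum without altering its value. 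The entire proof amounts to matching this single inequality against the pointwise Kuhn--Tucker-type conditions \eqref{b1}--\eqref{b4}.

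\textbf{Direction (ii)$\Rightarrow$(i).} For admissible $\boldsymbol\nu$, decompose $\nu^i-\lambda^i=(\nu^i-\lambda^i)^+-(\nu^i-\lambda^i)^-$. For $i\in I_0$ the bounds $(\nu^i-\lambda^i)^+\leqslant\xi^i-\lambda^i$ and $(\nu^i-\lambda^i)^-\leqslant\lambda^i$, combined with \eqref{b1}--\eqref{b2}, force the summand $\langle W^{\boldsymbol\lambda,i}_{\kappa,\mathbf f}-w^i_{\boldsymbol\lambda}g_i,\nu^i-\lambda^i\rangle$ to be nonnegative. For $i\in I\setminus I_0$, Lemma~\ref{a.e.} converts \eqref{b3} from n.e.\ to $\nu^i$-a.e., while \eqref{b4} supplies $\lambda^i$-a.e.\ equality, yielding nonnegativity again. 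Summation and Lemma~\ref{aux43} deliver~(i).

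\textbf{Direction (i)$\Rightarrow$(ii).} Fix $i\in I$ and test the variational inequality with $\nu^j=\lambda^j$ for $j\neq i$; the finiteness of $\langle W^{\boldsymbol\lambda,i}_{\kappa,\mathbf f},\lambda^i\rangle=s_i\kappa(R\boldsymbol\lambda,\lambda^i)+\langle f_i,\lambda^i\rangle$ (Corollary~\ref{relation}) collapses the inequality to $\langle W^{\boldsymbol\lambda,i}_{\kappa,\mathbf f},\nu^i\rangle\geqslant\langle W^{\boldsymbol\lambda,i}_{\kappa,\mathbf f},\lambda^i\rangle$ for every $\nu^i\in\mathcal E_\kappa^+(A_i)\cap\mathfrak M^{\sigma^i}(A_i,a_i,g_i)$ with $\langle f_i,\nu^i\rangle$ finite. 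For $i\in I\setminus I_0$, set $w^i_{\boldsymbol\lambda}:=a_i^{-1}\langle W^{\boldsymbol\lambda,i}_{\kappa,\mathbf f},\lambda^i\rangle$; should \eqref{b3} fail, $\{W^{\boldsymbol\lambda,i}_{\kappa,\mathbf f}<w^i_{\boldsymbol\lambda}g_i\}\cap A_i^M$ would have positive capacity for some large $M$ (note that $\kappa_{\boldsymbol\lambda}^i$ is finite n.e.\ by Corollary~\ref{relation}, so $W^{\boldsymbol\lambda,i}_{\kappa,\mathbf f}=+\infty$ n.e.\ on $\{f_i=+\infty\}$ in Case~I, while $\{|f_i|=\infty\}$ has capacity zero in Case~II). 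A compact subset of positive capacity carries a nonzero $\tau_0\in\mathcal E_\kappa^+$, whose rescaling $\tau:=a_i\tau_0/\langle g_i,\tau_0\rangle$ is admissible by \eqref{as2} and the bound $|f_i|\leqslant M$ on its support, yet violates the reduced inequality. Then \eqref{b4} follows from $\int(W^{\boldsymbol\lambda,i}_{\kappa,\mathbf f}-w^i_{\boldsymbol\lambda}g_i)\,d\lambda^i=0$ together with \eqref{b3} via Lemma~\ref{a.e.}. For $i\in I_0$, let $w^i_{\boldsymbol\lambda}$ be the (finite, by Chebyshev) supremum of those $c$ for which $W^{\boldsymbol\lambda,i}_{\kappa,\mathbf f}\leqslant cg_i$ holds $\lambda^i$-a.e., so that \eqref{b2} is automatic; should \eqref{b1} fail on a set of positive $(\xi^i-\lambda^i)$-measure, perform a mass-balanced swap by taking $\sigma_+:=(\xi^i-\lambda^i)|_{K_+}$ on a compact $K_+$ inside the bad set intersected with $A_i^M$ (possible by \eqref{as1'} and \eqref{as1}) and $\sigma_-\leqslant\lambda^i$ carried where $W^{\boldsymbol\lambda,i}_{\kappa,\mathbf f}/g_i$ exceeds a threshold strictly above $\sup_{K_+}W^{\boldsymbol\lambda,i}_{\kappa,\mathbf f}/g_i$ (such a set exists by definition of $w^i_{\boldsymbol\lambda}$), with $\langle g_i,\sigma_-\rangle=\langle g_i,\sigma_+\rangle$. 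The perturbation $\nu^i:=\lambda^i+\sigma_+-\sigma_-$ is admissible but contradicts the reduced variational inequality.

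\textbf{Main obstacle.} The delicate step is the construction of the test measures $\tau$ and $\sigma_\pm$ so as to simultaneously lie in $\mathcal E_\kappa^+$, have finite $f_i$-integral, respect the two-sided bound $0\leqslant\nu^i\leqslant\sigma^i$, and concentrate on the ``bad'' set. The hypotheses \eqref{as2}, \eqref{as1'}, \eqref{as1}, the lower boundedness of $f_i|_{\breve A_i}$, and the near-closedness of the $A_i$ (Lemma~\ref{l:cl-q}) are tailored precisely so that, after localization to $A_i^M$ for sufficiently large $M$, such measures can be realized as rescaled restrictions of $\xi^i$ in the constrained case, or of capacity-witnessing measures on compact subsets of $A_i^M$ in the unconstrained case.
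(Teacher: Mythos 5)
Your overall architecture coincides with the paper's: Lemma~\ref{aux43} as the pivot, reduction to a scalar variational inequality in the $i$-th component, a sup/inf definition of the Lagrange multiplier, and a mass-balanced swap to derive a contradiction; your (ii)$\Rightarrow$(i) direction is sound (the decomposition via $(\nu^i-\lambda^i)^{\pm}$ dominated by $\xi^i-\lambda^i$ and $\lambda^i$ is a harmless variant of the paper's splitting over the sets $\{W^{\boldsymbol\lambda,i}_{\kappa,\mathbf f}\gtrless w^i_{\boldsymbol\lambda}g_i\}$), and for $i\in I\setminus I_0$ your direct capacity-witness argument replaces the paper's citation of an external result, which is a legitimate and self-contained alternative.

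There is, however, one genuine gap, in the constrained case $i\in I_0$ of (i)$\Rightarrow$(ii). You take $w^i_{\boldsymbol\lambda}$ to be the $\lambda^i$-essential supremum of $W^{\boldsymbol\lambda,i}_{\kappa,\mathbf f}/g_i$ (your ``supremum of those $c$'' should in any case read ``infimum'') and declare it ``finite, by Chebyshev.'' Markov/Chebyshev only gives $\lambda^i\{W^{\boldsymbol\lambda,i}_{\kappa,\mathbf f}>cg_i\}\to0$ as $c\to\infty$; it does not make any superlevel set $\lambda^i$-null, so the essential supremum may a priori be $+\infty$ even though $\langle W^{\boldsymbol\lambda,i}_{\kappa,\mathbf f},\lambda^i\rangle$ is finite. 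Worse, the finiteness of this essential supremum is exactly the existence of a real constant satisfying \eqref{b2}, i.e.\ it is part of what must be proven, so as written you have assumed a piece of the conclusion. The paper sidesteps this by defining the multiplier from the other side, $L_i:=\sup\{t:\ W^{\boldsymbol\lambda,i}_{\kappa,\mathbf f}\geqslant tg_i\ (\xi^i-\lambda^i)\text{-a.e.}\}$, whose finiteness follows merely from the $(\xi^i-\lambda^i)$-a.e.\ \emph{finiteness} of $W^{\boldsymbol\lambda,i}_{\kappa,\mathbf f}/g_i$ (guaranteed by \eqref{as1'}, \eqref{as1} and the $c_\kappa$-absolute continuity of $\xi^i$, $\lambda^i$) together with its lower boundedness on $A_i$ (from \eqref{as2}, \eqref{bound}, \eqref{ser2} and the lower boundedness of $f_i|_{\breve A_i}$); \eqref{b1} is then automatic and the swap is used to prove \eqref{b2}. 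Your own swap can be adapted to rule out $w^i_{\boldsymbol\lambda}=+\infty$ (swap any compact piece of $\xi^i-\lambda^i$ in $A_i^M$ against $\lambda^i$ restricted to a sufficiently high superlevel set), so the gap is repairable, but the justification you give is not valid. A secondary, smaller omission: you never verify that replacing the $i$-th component by a test measure keeps the perturbed vector in $\mathcal E^{\boldsymbol\sigma}_{\kappa,\mathbf f}(\mathbf A,\mathbf a,\mathbf g)$ (absolute convergence of the cross-energy series, lower boundedness of $\kappa^i_{\boldsymbol\lambda_i}$ on $A_i$); this is where \eqref{ser2} and \eqref{bound} enter, and the paper spends a paragraph on it.
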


\begin{proof} As seen from (\ref{as1'}), the $\xi^i$, $i\in I_0$, are $c_\kappa$-ab\-sol\-ut\-ely continuous,
which will be permanently used in the proof.\footnote{As in \cite[p.~134]{L}, we call $\mu\in\mathfrak M(X)$
\emph{$c_\kappa$-ab\-sol\-ut\-ely continuous\/} if $\mu(K)=0$ for every
compact $K\subset X$ with $c_\kappa(K)=0$. Then $|\mu|_*(Q)=0$ for any $Q\subset X$ with $c_\kappa(Q)=0$. Every $\mu\in\mathcal
E_\kappa(X)$ is $c_\kappa$-ab\-sol\-ut\-ely continuous; but not conversely \cite[pp.~134--135]{L}.}
There is therefore no loss of generality in replacing each $\xi^i$, $i\in I_0$, by the
extension of $\xi^i|_{A_i\cap\breve{A}_i}$ by $0$ to all of $X$
(denoted again by $\xi^i$). After having done this, we next
replace (again with the notation preserved) the $A_i$, $i\in I$, by the
(closed) sets $\breve{A}_i$, which also involves no loss of
generality. Note that then (\ref{as1'}) and (\ref{as1}) remain valid.

For every $\boldsymbol\nu=(\nu^\ell)_{\ell\in I}\in\mathcal E^{\boldsymbol\sigma}_{\kappa,\mathbf f}(\mathbf A,\mathbf a,\mathbf
g)$ and every $i\in I$, write $\boldsymbol\nu_i:=(\nu_i^\ell)_{\ell\in I}$, where $\nu_i^\ell:=\nu^\ell$ for all $\ell\ne i$ and $\nu_i^i=0$; then $\boldsymbol\nu_i\in\mathcal E^+_{\kappa,\mathbf f}(\mathbf A)$. According to (\ref{vectorpot}) and (\ref{Rpot}), $\kappa^i_{\boldsymbol\nu_i}$ is given by
\[\kappa^i_{\boldsymbol\nu_i}(x)=s_i\sum_{\ell\in I, \ \ell\ne i}\,s_\ell\kappa(x,\nu^\ell)=s_i\kappa(x,R\boldsymbol\nu_i),\]
and it is well defined and finite n.e.\ (Corollary~\ref{relation}).

Furthermore, under the stated assumptions, \emph{$\kappa^i_{\boldsymbol\nu_i}$ is lower bounded on\/} $A_i$.
In fact, in the same manner as in the proof of Lemma~\ref{v-r-c} we see from (\ref{ser2}) that
$|R\boldsymbol\nu_i|(X)\leqslant C$, where $C\in(0,\infty)$. This together with (\ref{bound}) implies that
$\kappa(\cdot,R\boldsymbol\nu_i^-)$, resp.\ $\kappa(\cdot,R\boldsymbol\nu_i^+)$, is upper bounded on $A^+$, resp.\ on $A^-$, which in view of the preceding display yields our claim.

Fix $\boldsymbol\lambda\in\mathcal E^{\boldsymbol\sigma}_{\kappa,\mathbf f}(\mathbf A,\mathbf a,\mathbf g)$. Having written $\tilde{f}_i:=f_i+\kappa^i_{\boldsymbol\lambda_i}$, define the function
\begin{equation}\label{sW}W_{\kappa,\tilde{f}_i}^{\lambda^i}:=\kappa(\cdot,\lambda^i)+\tilde{f}_i=\kappa(\cdot,\lambda^i)+f_i+\kappa^i_{\boldsymbol\lambda_i}.\end{equation}
Comparing this with (\ref{vectorpot}) and (\ref{wpot}), we get
\begin{equation}\label{ww}W_{\kappa,\tilde{f}_i}^{\lambda^i}=W_{\kappa,\mathbf f}^{\boldsymbol\lambda,i}\text{ \ for all\ }i\in I.\end{equation}
Note that $W_{\kappa,\tilde{f}_i}^{\lambda^i}$ is finite n.e.\ on $A^\circ_i$ and lower bounded on $A_i$, because this is the case for each of the summands $\kappa(\cdot,\lambda^i)$, $f_i$, and $\kappa^i_{\boldsymbol\lambda_i}$.

To establish the equivalence of (i) and (ii), suppose first that (i) holds, i.e., $\boldsymbol\lambda\in\mathcal E^{\boldsymbol\sigma}_{\kappa,\mathbf f}(\mathbf A,\mathbf a,\mathbf g)$ solves Problem~\ref{pr2}. Fix $i\in I$.
By (\ref{vectoren}) and (\ref{wen}), we get for any
$\boldsymbol\nu\in\mathcal E^{\boldsymbol\sigma}_{\kappa,\mathbf f}(\mathbf A,\mathbf a,\mathbf g)$ with the additional property that $\boldsymbol\nu_i=\boldsymbol\lambda_i$ (in particular, for $\boldsymbol\nu=\boldsymbol\lambda$)
\[G_{\kappa,\mathbf f}(\boldsymbol\nu)=G_{\kappa,\mathbf f}(\boldsymbol\lambda_i)+G_{\kappa,\tilde{f}_i}(\nu^i).\]
Combined with $G_{\kappa,\mathbf f}(\boldsymbol\nu)\geqslant G_{\kappa,\mathbf f}(\boldsymbol\lambda)$, this yields $G_{\kappa,\tilde{f}_i}(\nu^i)\geqslant G_{\kappa,\tilde{f}_i}(\lambda^i)$, and hence $\lambda^i$ minimizes $G_{\kappa,\tilde{f}_i}(\nu)$, where $\nu$ ranges over the class $\mathcal E^{\sigma^i}_{\kappa,\tilde{f}_i}(A_i,a_i,g_i)$.
This enables us to show that there exists $w_{\lambda^i}\in\mathbb R$ such that
\begin{align}\label{sing1}W_{\kappa,\tilde{f}_i}^{\lambda^i}&\geqslant w_{\lambda^i}g_i\text{ \ $(\xi^i-\lambda^i)$-a.e.},\\
\label{sing2}W_{\kappa,\tilde{f}_i}^{\lambda^i}&\leqslant w_{\lambda^i}g_i\text{ \ $\lambda^i$-a.e.}\end{align}
whenever $i\in I_0$, while otherwise (for $i\in I\setminus I_0$)
\begin{align}\label{sing3}W_{\kappa,\tilde{f}_i}^{\lambda^i}&\geqslant w_{\lambda^i}g_i\text{ \ n.e. on\ }A_i,\\
\label{sing4}W_{\kappa,\tilde{f}_i}^{\lambda^i}&=w_{\lambda^i}g_i\text{ \ $\lambda^i$-a.e.}
\end{align}

To this end, write for any $w\in\mathbb R$
\begin{align*}A_i^+(w)&:=\bigl\{x\in A_i:\ W_{\kappa,\tilde{f}_i}^{\lambda^i}(x)>wg_i(x)\bigr\},\\
A_i^-(w)&:=\bigl\{x\in A_i:\ W_{\kappa,\tilde{f}_i}^{\lambda^i}(x)<wg_i(x)\bigr\},\end{align*}
and assume first that $i\in I_0$. Then (\ref{sing1}) holds with
\[w_{\lambda^i}:=L_i:=\sup\,\bigl\{t\in\mathbb R: \ W_{\kappa,\tilde{f}_i}^{\lambda^i}\geqslant tg_i\text{ \ $(\xi^i-\lambda^i)$-a.e.}\bigr\}.\]
In turn, (\ref{sing1}) with $w_{\lambda^i}=L_i$ yields $L_i<\infty$, because
\[\widetilde{W}_{\kappa,\tilde{f}_i}^{\lambda^i}:=W_{\kappa,\tilde{f}_i}^{\lambda^i}/g_i<\infty\] holds n.e.\ on $A^\circ_i$,
hence $(\xi^i-\lambda^i)$-a.e.\ on $A^\circ_i$, for $\xi^i$ and $\lambda^i$ are both $c_\kappa$-ab\-sol\-ut\-ely continuous, and finally $(\xi^i-\lambda^i)$-a.e.\ on $A_i$ by (\ref{as1}). Also, $L_i>-\infty$ since, in consequence of (\ref{as2}),
$\widetilde{W}_{\kappa,\tilde{f}_i}^{\lambda^i}$ along with $W_{\kappa,\tilde{f}_i}^{\lambda^i}$ is lower bounded on~$A_i$.

We next proceed by establishing (\ref{sing2}) with $w_{\lambda^i}=L_i$. Assume, on the contrary, that this fails to hold.
Since $\widetilde{W}_{\kappa,\tilde{f}_i}^{\lambda^i}$ is $\lambda^i$-meas\-ur\-able, one can choose $w_i\in(L_i,\infty)$ so that
$\lambda^i(A_i^+(w_i))>0$. At the same time, as $w_i>L_i$, it follows from the definition of $L_i$ that
$(\xi^i-\lambda^i)(A_i^-(w_i))>0$. Therefore, there exist compact sets $K_1\subset A_i^+(w_i)$ and $K_2\subset A_i^-(w_i)$ such that
\begin{equation}\label{c01}0<\langle g_i,\lambda^i|_{K_1}\rangle<\langle g_i,(\xi^i-\lambda^i)|_{K_2}\rangle.\end{equation}

Write $\tau^i:=(\xi^i-\lambda^i)|_{K_2}$; then $\kappa(\tau^i,\tau^i)<\infty$ by (\ref{as1'}). Since $\langle W_{\kappa,\tilde{f}_i}^{\lambda^i},\tau^i\rangle\leqslant\langle w_ig_i,\tau^i\rangle<\infty$, in view of (\ref{sW})
we get $\langle\tilde{f}_i,\tau^i\rangle<\infty$. Define
\[\theta^i:=\lambda^i-\lambda^i|_{K_1}+c_i\tau^i,\text{ \ where \ }c_i:=\langle g_i,\lambda^i|_{K_1}\rangle/\langle g_i,\tau^i\rangle.\]
Observing from (\ref{c01}) that $c_i\in(0,1)$, we obtain by straightforward verification $\langle g_i,\theta^i\rangle=a_i$ and also $\theta^i\leqslant\xi^i$. Hence, $\theta^i\in\mathcal E^{\xi^i}_{\kappa,\tilde{f}_i}(A_i,a_i,g_i)$. But
\begin{align*}
\langle W_{\kappa,\tilde{f}_i}^{\lambda^i},\theta^i-\lambda^i\rangle&=\langle
W_{\kappa,\tilde{f}_i}^{\lambda^i}- w_ig_i,\theta^i-\lambda^i\rangle\\&{}=-\langle
W_{\kappa,\tilde{f}_i}^{\lambda^i}- w_ig_i,\lambda^i|_{K_1}\rangle+c_i\langle
W_{\kappa,\tilde{f}_i}^{\lambda^i}- w_ig_i,\tau^i\rangle<0,\end{align*}
which is impossible in view of the scalar version of Lemma~\ref{aux43} (with $I=\{i\}$). The contradiction obtained establishes (\ref{sing2}).

Let now $i\in I\setminus I_0$. Since $\lambda^i$ minimizes $G_{\kappa,\tilde{f}_i}(\nu)$ among $\nu\in\mathcal E^+_{\kappa,\tilde{f}_i}(A_i,a_i,g_i)$,  it follows from \cite[Theorem~7.1]{ZPot2} that (\ref{sing3}) and (\ref{sing4}) hold with
\[w_{\lambda^i}:=\langle W_{\kappa,\tilde{f}_i}^{\lambda^i},\lambda^i\rangle/a_i\in\mathbb R.\]

Substituting (\ref{sing1}), (\ref{sing2}), (\ref{sing3}), and (\ref{sing4}) into (\ref{ww}) establishes (\ref{b1}), (\ref{b2}),
(\ref{b3}),  and (\ref{b4}) with $w_{\boldsymbol\lambda}^i:=w_{\lambda^i}$, $i\in I$. Hence, (i)$\Rightarrow$(ii).

To complete the proof, suppose finally that (ii) holds. By (\ref{ww}), for every $i\in I_0$, resp.\ $i\in I\setminus I_0$, (\ref{sing1}) and (\ref{sing2}), resp.\ (\ref{sing3}) and (\ref{sing4}), are then fulfilled with $w_{\lambda^i}:=w_{\boldsymbol\lambda}^i$ and $\tilde{f}_i:=f_i+\kappa^i_{\boldsymbol\lambda_i}$. We observe from (\ref{sing1}) and (\ref{sing2}) that
$\lambda^i(A_i^+(w_{\lambda^i}))=0$ and $(\xi^i-\lambda^i)(A_i^-(w_{\lambda^i}))=0$ for all $i\in I_0$.
Having fixed $\boldsymbol\nu\in\mathcal E^{\boldsymbol\sigma}_{\kappa,\mathbf f}(\mathbf A,\mathbf a,\mathbf g)$, we therefore get for all $i\in I_0$,
\begin{align}\label{last1}&\langle W^{\boldsymbol\lambda,i}_{\kappa,\mathbf f},\nu^i-\lambda^i\rangle=\langle
W_{\kappa,\tilde{f}_i}^{\lambda^i}-w_{\lambda^i}g_i,\nu^i-\lambda^i\rangle\\
\notag&=\langle W_{\kappa,\tilde{f}_i}^{\lambda^i}-w_{\lambda^i}g_i,\nu^i|_{A_i^+(w_{\lambda^i})}\rangle+\langle
W_{\kappa,\tilde{f}_i}^{\lambda^i}-w_{\lambda^i}g_i,(\nu^i-\xi^i)|_{A_i^-(w_{\lambda^i})}\rangle\geqslant0.\end{align}
Furthermore, it follows from (\ref{sing3}) and (\ref{sing4}) that
\[\lambda^i(A_i^+(w_{\lambda^i}))=\lambda^i(A_i^-(w_{\lambda^i}))=\nu^i(A_i^-(w_{\lambda^i}))=0\text{ \ for all\ }i\in I\setminus I_0,\]
$\nu^i$ being $c_\kappa$-absolutely continuous. Hence, for all $i\in I\setminus I_0$,
\begin{equation}\label{last2}\langle W^{\boldsymbol\lambda,i}_{\kappa,\mathbf f},\nu^i-\lambda^i\rangle=\langle
W_{\kappa,\tilde{f}_i}^{\lambda^i}-w_{\lambda^i}g_i,\nu^i-\lambda^i\rangle=\langle W_{\kappa,\tilde{f}_i}^{\lambda^i}-w_{\lambda^i}g_i,\nu^i|_{A_i^+(w_{\lambda^i})}\rangle\geqslant0.\end{equation}
Summing up the inequalities in (\ref{last1}) and (\ref{last2}) over all $i\in I$, we see from Lemma~\ref{aux43} in view of the arbitrary choice of $\boldsymbol\nu\in\mathcal E^{\boldsymbol\sigma}_{\kappa,\mathbf f}(\mathbf A,\mathbf a,\mathbf g)$ that $\boldsymbol\lambda$ is indeed a solution to Problem~\ref{pr2}.\end{proof}

\begin{corollary}\label{desc:cont}Under the hypotheses of Theorem\/~{\rm\ref{th:desc}}, assume moreover that\/ $\kappa$ is continuous on\/ $\breve{A}^+\times\breve{A}^-$ and Case\/~{\rm I} holds. Then\/ {\rm(\ref{b2})} and\/ {\rm(\ref{b4})} in\/ {\rm(ii)} are equivalent to the following apparently stronger relations:
\begin{align*}
W^{\boldsymbol\lambda,i}_{\kappa,\mathbf f}(x)&\leqslant w_{\boldsymbol\lambda}^ig_i(x)\text{ \ for all\ }x\in S(\lambda^i)\text{\ and all\ }i\in I_0,\\
W^{\boldsymbol\lambda,i}_{\kappa,\mathbf f}(x)&=w_{\boldsymbol\lambda}^ig_i(x)\text{ \ for nearly all\ }x\in S(\lambda^i)\text{\ and all\ }i\in I\setminus I_0.
\end{align*}\end{corollary}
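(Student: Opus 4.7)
\textbf{Proof plan for Corollary~\ref{desc:cont}.}

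The plan is to upgrade the $\lambda^i$-a.e.\ statements (\ref{b2}) and (\ref{b4}) of Theorem~\ref{th:desc}(ii) to pointwise, resp.\ n.e., statements on $S(\lambda^i)$; the reverse implication is immediate since each $\lambda^i$ is carried by $S(\lambda^i)$. The key mechanism is lower semicontinuity of the weighted potential $W^{\boldsymbol\lambda,i}_{\kappa,\mathbf f}$ on $\breve{A}_i$, combined with the continuity of $g_i$.

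First I would establish l.s.c.\ of $W^{\boldsymbol\lambda,i}_{\kappa,\mathbf f}$ on $\breve{A}_i$. By Corollary~\ref{relation} and the Hahn--Jordan decomposition of $R\boldsymbol\lambda$,
\[
W^{\boldsymbol\lambda,i}_{\kappa,\mathbf f}\;=\;s_i\bigl[\kappa(\cdot,R\boldsymbol\lambda^+)-\kappa(\cdot,R\boldsymbol\lambda^-)\bigr]+f_i.
\]
The term $\kappa(\cdot,R\boldsymbol\lambda^{s_i})$ is l.s.c.\ on $X$ as the potential of a positive Radon measure, and $f_i$ is l.s.c.\ on $X$ in Case~I. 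The cross-term $\kappa(\cdot,R\boldsymbol\lambda^{-s_i})$ is the crucial piece, and I would show it is (finitely) continuous on $\breve{A}^{s_i}$. Since $R\boldsymbol\lambda^{-s_i}$ is a \emph{bounded} Radon measure carried by $\breve{A}^{-s_i}$ (boundedness coming from (\ref{ser2}), exactly as in (\ref{m2})), $\kappa$ is bounded on $\breve{A}^+\times\breve{A}^-$ by (\ref{bound}), and $\kappa$ is continuous there by hypothesis, a compact-exhaustion plus dominated-convergence argument delivers the continuity on $\breve{A}^{s_i}$. Summing the three pieces shows $W^{\boldsymbol\lambda,i}_{\kappa,\mathbf f}$ is l.s.c.\ on $\breve{A}_i\subset\breve{A}^{s_i}$.

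Next, because $g_i$ is continuous, the set
\[
U_i\;:=\;\bigl\{x\in\breve{A}_i:\ W^{\boldsymbol\lambda,i}_{\kappa,\mathbf f}(x)>w^i_{\boldsymbol\lambda}g_i(x)\bigr\}
\]
is relatively open in $\breve{A}_i$, so $U_i=V_i\cap\breve{A}_i$ for some open $V_i\subset X$. From (\ref{b2}) if $i\in I_0$, or from (\ref{b4}) if $i\in I\setminus I_0$, we have $\lambda^i(U_i)=0$. If some $x_0\in S(\lambda^i)\cap U_i$ existed, then $V_i$ would be an open neighbourhood of $x_0$ and hence satisfy $\lambda^i(V_i)>0$; but $\lambda^i$ is carried by $\breve{A}_i$, so $\lambda^i(V_i)=\lambda^i(U_i)=0$, a contradiction. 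Thus $W^{\boldsymbol\lambda,i}_{\kappa,\mathbf f}\leqslant w^i_{\boldsymbol\lambda}g_i$ pointwise on $S(\lambda^i)$ for every $i\in I$, which upgrades (\ref{b2}) when $i\in I_0$. For $i\in I\setminus I_0$, combining this pointwise upper bound with (\ref{b3})---which, in view of $c_\kappa(A_i\bigtriangleup\breve{A}_i)=0$, holds n.e.\ on $\breve{A}_i\supset S(\lambda^i)$---yields the required n.e.\ equality on $S(\lambda^i)$, upgrading (\ref{b4}).

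The main obstacle is the continuity claim in the first step: because $R\boldsymbol\lambda^{-s_i}$ need not be compactly supported and $I$ may be infinite, one cannot simply invoke uniform continuity of $\kappa$. The resolution is to exhaust $\breve{A}^{-s_i}$ by compact sets, bounding the tail of the integral uniformly via (\ref{bound}) together with the bounded total mass of $R\boldsymbol\lambda^{-s_i}$; on each compact piece, continuity follows from standard arguments for continuous bounded kernels integrated against a Radon measure of compact support. In a non-metrizable $X$ this must be phrased in terms of nets, but no new idea is required.
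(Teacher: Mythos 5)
Your proposal is correct and follows essentially the same route as the paper: reduce everything to lower semicontinuity of $W^{\boldsymbol\lambda,i}_{\kappa,\mathbf f}$ on $\breve{A}_i$, which hinges on the (finite) continuity of the cross potentials $\kappa(\cdot,R\boldsymbol\lambda^{\mp})$ on $\breve{A}^{\pm}$, and then conclude by the standard open-set/support argument. The only difference is in that continuity sub-step, where the paper replaces $\kappa$ by the nonnegative continuous kernel $\kappa^*:=\sup_{\breve{A}^+\times\breve{A}^-}\kappa-\kappa$ so that lower semicontinuity of $\kappa^*(\cdot,R\boldsymbol\lambda^{\mp})$ immediately gives upper semicontinuity of the original potential, whereas you use a compact exhaustion with a uniform tail bound; both are valid (for your tail bound note that $\kappa$ is automatically bounded below on $\breve{A}^+\times\breve{A}^-$, being nonnegative when $X$ is noncompact and l.s.c.\ on a compact otherwise).
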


\begin{proof}This will follow once we have proven that for any $i\in I$, $W^{\boldsymbol\lambda,i}_{\kappa,\mathbf f}|_{\breve{A}_i}$ is l.s.c., which in turn holds if
$\kappa(\cdot,R\boldsymbol\lambda^-)|_{\breve{A}^+}$ and $\kappa(\cdot,R\boldsymbol\lambda^+)|_{\breve{A}^-}$ are shown to be continuous. To establish the latter, write
\begin{equation}\label{kappastar}
\kappa^*(x,y):=-\kappa(x,y)+\sup_{(x',y')\in\breve{A}^+\times\breve{A}^-}\,\kappa(x',y'), \ (x,y)\in\breve{A}^+\times\breve{A}^-.
\end{equation}
Under the stated assumptions, $\kappa^*$ is nonnegative and
continuous, and hence
\[\kappa^*(x,R\boldsymbol\lambda^-):=\int\kappa^*(x,y)\,dR\boldsymbol\lambda^-(y), \ x\in
\breve{A}^+,\]  is l.s.c. In the same manner as in the proof of Lemma~\ref{v-r-c}, we observe from (\ref{ser2}) that $|R\boldsymbol\lambda|(X)\leqslant C<\infty$. Integrating (\ref{kappastar}) with respect to $R\boldsymbol\lambda^-$, we therefore see that $\kappa^*(x,R\boldsymbol\lambda^-)$,
$x\in\breve{A}^+$, coincides up to a finite summand with the
restriction of $-\kappa(x,R\boldsymbol\lambda^-)$ to $\breve{A}^+$. It follows that $\kappa(x,R\boldsymbol\lambda^-)|_{\breve{A}^+}$ must be upper semicontinuous. Being also l.s.c., $\kappa(x,R\boldsymbol\lambda^-)|_{\breve{A}^+}$ is actually continuous as desired. The same holds with the indices + and - reversed.
\end{proof}

{\sl Acknowledgement.} I thank Bent Fuglede for many fruitful discussions on the topic of the paper.

\end{document}